\DeclareCiteCommand{\parencite}[\mkbibbrackets]{%
	\ifbibmacroundef{cite:init}{}{\usebibmacro{cite:init}}\usebibmacro{prenote}%
}{%
	\usebibmacro{citeindex}%
	\printtext[bibhyperref]{\usebibmacro{cite}}%
}{%
	\ifbibmacroundef{cite:init}{\multicitedelim}{}%
}{%
	\usebibmacro{postnote}%
}%
\let\cite\parencite
\definecolor{hanblue}{rgb}{0.27, 0.42, 0.81}
\DeclarePairedDelimiter\parens()
\DeclarePairedDelimiter\bracks[]
\DeclarePairedDelimiter{\abs}{\lvert}{\rvert}
\DeclarePairedDelimiter{\norm}{\lVert}{\rVert}
\DeclarePairedDelimiter{\seq}()
\providecommand\given{\nonscript\;\delimsize|\nonscript\;\mathopen{}}
\DeclarePairedDelimiterX\set[1]\{\}{#1}
\DeclarePairedDelimiterX\dual[2]{\langle}{\rangle}{#1,#2}
\DeclarePairedDelimiterX\dualM[2]{\langle}{\rangle_C}{#1,#2}
\DeclarePairedDelimiterX\ddual[2]{\llangle}{\rrangle}{#1,#2}
\DeclarePairedDelimiterX\eqclass[2]{\llbracket}{\rrbracket}{#1,#2}
\DeclareMathOperator{\dom}{dom}
\DeclareMathOperator{\Id}{Id}
\DeclareMathOperator{\supp}{supp}
\DeclareMathOperator{\Ran}{Ran}
\DeclareMathOperator{\cl}{cl}
\DeclareMathOperator{\diam}{diam}
\DeclareMathOperator{\lip}{lip}
\let\Im\Ran
\newcommand{\TangentCone}{\mathcal{T}}
\newcommand{\N}{\mathbb{N}}
\newcommand{\R}{\mathbb{R}}
\newcommand{\A}{\mathcal{A}}
\newcommand{\I}{\mathcal{I}}
\newcommand{\MM}{\mathcal{M}}
\newcommand{\BB}{\mathcal{B}}
\newcommand\de{\mathop{}\!\mathrm{d}}
\newcommand{\M}{\mathcal{M}(\Omega)}
\newcommand{\C}{C(\Omega)}
\newcommand{\Co}{C^1(\Omega)}
\newcommand{\Lip}{\operatorname{Lip}(\Omega)}
\newcommand{\Md}{\mathcal{M}(\Omega;\R^d)}
\newcommand{\Cd}{C(\Omega;\R^d)}
\newcommand{\Cod}{\Co^*}
\newcommand{\Moc}{\M}
\DeclarePairedDelimiter{\mnorm}  {\lVert}{\rVert_{\mathcal{M}}}
\DeclarePairedDelimiter{\cnorm}  {\lVert}{\rVert_{C}}
\DeclarePairedDelimiter{\conorm} {\lVert}{\rVert_{C^1}}
\DeclarePairedDelimiter{\codnorm}{\lVert}{\rVert_{(C^1)^*}}
\DeclarePairedDelimiter{\mdnorm} {\lVert}{\rVert_{\mathcal{M}^d}}
\DeclarePairedDelimiter{\cdnorm} {\lVert}{\rVert_{C^d}}
\DeclarePairedDelimiter{\lnorm}  {\lVert}{\rVert_{\operatorname{Lip}}}
\DeclarePairedDelimiter{\blnorm} {\lVert}{\rVert_{\operatorname{BL}}}
\DeclarePairedDelimiter{\ynorm}  {\lVert}{\rVert_{Y}}
\newcommand{\Kb}{\mathcal{K}}
\newcommand{\eps}{\varepsilon}
\newcommand{\Intr}{\operatorname{int}}
\newcommand{\sgn}{\operatorname{sgn}}
\newcommand{\weakstar}{\stackrel{*}{\rightharpoonup}}  %
\newcommand\loss{L}
\DeclareFontFamily{OMX}{MnSymbolE}{}
\DeclareSymbolFont{MnLargeSymbols}{OMX}{MnSymbolE}{m}{n}
\DeclareFontShape{OMX}{MnSymbolE}{m}{n}{
    <-6>  MnSymbolE5
   <6-7>  MnSymbolE6
   <7-8>  MnSymbolE7
   <8-9>  MnSymbolE8
   <9-10> MnSymbolE9
  <10-12> MnSymbolE10
  <12->   MnSymbolE12
}{}
\DeclareFontShape{OMX}{MnSymbolE}{b}{n}{
    <-6>  MnSymbolE-Bold5
   <6-7>  MnSymbolE-Bold6
   <7-8>  MnSymbolE-Bold7
   <8-9>  MnSymbolE-Bold8
   <9-10> MnSymbolE-Bold9
  <10-12> MnSymbolE-Bold10
  <12->   MnSymbolE-Bold12
}{}
\let\llangle\@undefined
\let\rrangle\@undefined
\DeclareMathDelimiter{\llangle}{\mathopen}%
                     {MnLargeSymbols}{'164}{MnLargeSymbols}{'164}
\DeclareMathDelimiter{\rrangle}{\mathclose}%
                     {MnLargeSymbols}{'171}{MnLargeSymbols}{'171}
\numberwithin{equation}{section}
\definecolor{darkred}{rgb}{.7,0,0}
\definecolor{green}{rgb}{0,0.7,0}
\theoremstyle{plain}
\newtheorem{theorem}{Theorem}[section]
\newtheorem{lemma}[theorem]{Lemma}
\newtheorem{corollary}[theorem]{Corollary}
\newtheorem{proposition}[theorem]{Proposition}
\theoremstyle{definition} 
\newtheorem{assumption}[theorem]{Assumption}
\newtheorem{definition}[theorem]{Definition}
\newtheorem{example}[theorem]{Example}
\newtheorem{remark}[theorem]{Remark}
\begin{document}
\title[Second-order conditions for Radon norm regularization]{No-gap second-order conditions for minimization problems in spaces of measures}

\pagestyle{myheadings}

 \author[G. Wachsmuth]{Gerd Wachsmuth}
\author[D. Walter]{Daniel Walter}

 \address[Gerd Wachsmuth]{Institute of Mathematics, Brandenburgische Technische Universität Cottbus–Senftenberg, 03046 Cottbus, Germany}

\address[Daniel Walter]{Institut f\"ur Mathematik, Humboldt-Universit\"at zu Berlin, 10117 Berlin, Germany}

\email[Gerd Wachsmuth]{gerd.wachsmuth@b-tu.de}
 \email[Daniel Walter]{daniel.walter@hu-berlin.de}

\begin{abstract}
\small{ %
Over the last years, minimization problems over spaces of measures have received increased interest due to their relevance in the context of inverse problems, optimal control and machine learning. A fundamental role in their numerical analysis is played by the assumption that the optimal dual state admits finitely many global extrema and satisfies a second-order sufficient optimality condition in each one of them.         
 In this work, we show the full equivalence of these structural assumptions to a no-gap second-order condition involving the second subderivative of the Radon norm as well as to a local quadratic growth property of the objective functional with respect to the bounded Lipschitz norm.

\vskip .3truecm \noindent Keywords:
Optimization in measure space,
second-order optimality conditions,
second subderivatives,
Wasserstein distance.
 
  \vskip.1truecm \noindent 2020 Mathematics Subject Classification:
   46E27,49K27, 	49J52, 	49J53 
}
\end{abstract}
 
\maketitle
\section{Introduction} \label{sec:intro}
 In this paper, second-order necessary and sufficient optimality conditions for sparse minimization problems of the form 
 \begin{align} \label{def:sparseintro}
 \min_{u} J(u) \coloneqq \loss(Ku)+ \alpha \mnorm{u} \tag{$P$}
 \end{align}
 are studied.
Here,~$\mnorm{\cdot}$ denotes the canonical norm on the space of Radon measures~$\M$ on a compact spatial domain~$\Omega$,~$\alpha>0$,
$\loss \colon Y \to \R $
is a smooth, but not necessarily convex, loss function
on a Hilbert space $Y$ of observations
and~$K \colon \M \to Y$ is a linear and continuous control-to-observation operator given by 
 \begin{align} \label{def:integraloperator}
     Ku= \int_\Omega k(x) \de u(x) \qquad \forall u \in \M
 \end{align}
 for some sufficiently smooth~$k \colon \Omega \to Y  $.
 Problems of this or similar form appear in a variety of challenging settings in the context of inverse problems,~\cite{scherzer,pikkarainen}, such as source identification in acoustics,~\cite{Trautmannseismic,pieper_tang_trautmann_walter_2020} or microscopy,~\cite{McCutchen:67,denoyelle_duval_peyre_2020}, optimal control,~\cite{clasonphoto,casas,piepervexler}, or the training of shallow neural networks,~\cite{bach}, and optimal sensor placement,~\cite{neitzel_2019,kiefer}. This is largely attributed to the observation that the appearance of the nonsmooth Radon norm in the objective functional promotes \textit{sparse minimizers}, i.e., solutions given as a finite linear combination of Dirac measures
\begin{align} \label{eq:sparsemeasureintro}
	\bar{u}= \sum^N_{j=1} \bar{\lambda}_j \delta_{\bar{x}_j} \quad \text{with} \quad \Bar{x}_j \in \Omega,~\Bar{\lambda}_j \in \R \setminus \set{0},
\end{align}
where $\delta_{\Bar x_j} \in \M$ is the Dirac measure in the point $\Bar x_j$,
i.e.,
$\int_\Omega \varphi \de \delta_{\Bar{x}_j}=\varphi(\Bar{x}_j)$
holds for all continuous functions~$\varphi$ on~$\Omega$.
From an analytic perspective, the sparsity promoting property of the Radon norm follows, e.g., by convex representer theorems, if the image of~$K$ is finite dimensional, \cite{Boyer19,Carioni20}, or it can be deduced from the first-order necessary optimality condition for the Jordan decomposition~$\Bar{u}= \Bar{u}_+ -\Bar{u}_{-}$,
\begin{align} \label{eq:firstorderintro}
    \supp \Bar{u}_{\pm} \subset \set*{ x \in \Omega \given \Bar{p}(x)= \pm \alpha} \quad  \text{as well as} \quad |\Bar{p}(x)| \leq \alpha \quad \text{on}~ \Omega
\end{align}
provided that the \textit{dual variable}~$\Bar{p}=-(k(\cdot), \nabla \loss(K \Bar{u}))_Y \in C(\Omega)$ admits finitely many extrema, see~\cite[Proposition 3.8]{pieper_walter_2021}.

Given a sparse control~$\Bar{u}$,
represented as in \eqref{eq:sparsemeasureintro},
with associated dual variable~$\Bar{p}$ satisfying
\begin{align} \label{eq:strictcompintro}
    \supp \Bar{u} = \set*{ x \in \Omega \given |\Bar{p}(x)|=  \alpha}= \{\Bar{x}_j\}^N_{j=1} \subset \operatorname{int}(\Omega),
\end{align}
we call~$\Bar{p}$~\textit{non-degenerate} if we have
\begin{align} \label{eq:nondegintro}
	\operatorname{dim}\parens*{ \operatorname{span} \set*{k(\Bar{x}_1),\dots,k(\Bar{x}_N) } }=N
	\quad \text{and} \quad
	\sgn(\Bar{\lambda}_j)\nabla^2 \Bar{p}(\Bar{x}_j) \leq_L -\theta \operatorname{Id} \quad\forall j = 1,\ldots, N,
\end{align}
where $\theta > 0$ and
``$\leq_L$'' denotes the Loewner order. Non-degeneracy plays a fundamental role in the analysis of sparse minimization problems allowing, e.g., a fine analysis of the asymptotic regularization properties of the Radon norm,~\cite{duval_peyre_2014}, the derivation of sharp a priori error estimates for discretizations of~\eqref{def:sparseintro}, or the proof of fast convergence rates for problem-tailored numerical solution algorithms,~\cite{chizat,pieper_walter_2021,flinth}.

\subsection{Contribution} \label{subsec:contribution} 
Given a sparse measures~$\bar{u}$, i.e. a measure in the form~\eqref{eq:sparsemeasureintro}, which satisfies the first-order necessary optimality condition~\eqref{eq:firstorderintro} together with its associated dual variable~$\Bar{p}$, we investigate~\textit{no-gap} second-order conditions for the sparse minimization problem~\eqref{def:sparseintro}. By the latter, we refer to the property that positive semidefiniteness of a suitable second-order derivative at~$\Bar{u}$ is a necessary condition for its minimality while positive definiteness implies strict local minimality as well as a local quadratic growth behavior of the objective functional~$J$ around~$\Bar{u}$. For this purpose, an equivalent ``lifted'' version of Problem~\eqref{def:sparseintro} onto~$\Cod$, the topological dual space of the continuous differentiable functions, is considered,
\begin{align*} 
    \min_{h \in \Cod} \loss(\Kb(h))+G(h),
\end{align*}
involving suitable extensions~$\mathcal{K} \colon \Cod \to Y  $ as well as~$G \colon \Cod \to (-\infty, \infty]$ which satisfy
\begin{align*}
    \mathcal{K}(\imath_{\mathcal{M}}u)=Ku, \quad G(\imath_{\mathcal{M}} u)= \alpha \mnorm{u} \quad \forall u \in \M \quad \text{and} \quad G(h)= \infty \quad \forall h \in   \Cod \setminus \M,
\end{align*}
where~$\imath_{\mathcal{M}}$ denotes the compact embedding of~$\M$ into~$\Cod$. Our abstract main result, \cref{thm:main}, clarifies the connection between structural properties of the dual variable~$\Bar{p}$, in particular its curvature around~$\supp \Bar{u}$, the second order condition 
\begin{align*}
(\Kb h,\nabla^2 \loss(K \Bar{u}) \Kb h)_Y   + G''( \imath_{\mathcal{M}} \bar u, \Bar{p}; h) >0 \qquad\forall h \in \Cod \setminus \{0\},
\end{align*}
involving~$G''( \imath_{\mathcal{M}} \bar u, \Bar{p}; \cdot) \colon \Cod \to [-\infty,\infty]$, the \emph{weak* second subderivative} of~$G$ at~$\imath_{\mathcal{M}} \bar u$ for~$\Bar{p}$,
as well as local quadratic growth of the objective functional in the vicinity of~$\Bar{u}$ w.r.t the bounded Lipschitz norm 
\begin{equation*}
\blnorm{u}= \sup_{\lnorm{\varphi}\leq 1} \int_\Omega \varphi \de u \quad \text{where} \quad 	\lnorm{\varphi} \coloneqq \max\set{\cnorm{\varphi},\lip(\varphi)},
\end{equation*}
and~$\lip(\varphi)$ denotes the Lipschitz constant. While we emphasize that \cref{thm:main} covers nonconvex loss functions~$L$ and contains no restriction to settings in which~$\Bar{p}$ only admits a finite number of global extrema, our general analysis reveals non-degeneracy in the sense of~\eqref{eq:nondegintro} as a special case.
Indeed, \cref{coroll:nondegfromssc,coroll:nondegfromssc}
shows the equivalency of the following statements
under the assumption that ~$L$ is strongly convex and that the strict complementarity condition~\eqref{eq:strictcompintro} holds.
\begin{enumerate}
\item The dual variable~$\Bar{p}$ is non-degenerate, i.e., it satisfies~\eqref{eq:nondegintro}.
\item  There holds
\begin{align}
(\Kb h,\nabla^2 \loss(K \Bar{u}) \Kb h)_Y   + G''( \imath_{\mathcal{M}} \bar u, \Bar{p}; h) >0 \qquad\forall h \in \Cod \setminus \{0\},
\end{align}
and
\begin{equation*}
		\begin{aligned}
			&\text{for all $\seq{t_k} \subset (0,\infty)$, $\seq{h_k} \subset \Cod$
			with $t_k \searrow 0$, $h_k \weakstar 0$}
			\text{ and $\codnorm{h_k} = 1$, we have }\\
			&\qquad
			\liminf_{k \to \infty} \parens[\bigg]{
				\frac{1}{t^2_k}\parens*{G(\imath_{\mathcal{M}}\Bar{u}+t_k h_k)-G(\imath_{\mathcal{M}}\Bar{u})}
				-
				\frac{1}{t_k}\ddual{\Bar{p}}{ h_k}
			}
			> 0		
		\end{aligned}
\end{equation*}
where~$\ddual{\cdot}{ \cdot}$ denotes the duality pairing between~$\Co$ and~$\Cod$.
\item  There exist~$\gamma>0$ and $\varepsilon >0$ such that there holds
\begin{align} \label{eq:quadintro}
	J(u)-J(\Bar{u}) \geq \gamma \blnorm{u-\bar{u}}^2 \qquad\forall u \in \M, \blnorm{u-\bar{u}} \leq \varepsilon. 
\end{align}
\end{enumerate}
A main ingredient of our derivation of \cref{thm:main} is an explicit formula for ~$G''( \imath_{\mathcal{M}} \bar u, \Bar{p}; \cdot)$ which we obtain as a byproduct of showing the weak* twice epi-differentiable of~$G$. Moreover, we also conclude that the sparsity of~$\bar{u}$ is a fundamental requirement for our results: Indeed, see \cref{lem:ndc_implies_isolated_support_2}, quadratic growth w.r.t the bounded Lipschitz norm around a measure~$\Bar{u}$ can only hold if~$\Bar{u}$ is a finite sum of Dirac measures.
\subsection{Related work} \label{subsec:related}
The present work is mainly influenced by prior work on no-gap second-order conditions utilizing the notion of second subderivatives.
This is a well-known concept in finite-dimensional variational analysis,
see the classical treatment in \cite[Definition~13.3]{RockafellarWets1998}
as well as the recent contribution \cite{BenkoMehlitz2022}. In the infinite-dimensional setting,
this derivative has been used in \cite{Do1992}
to characterize the directional differentiability of projections.
More recently, \cite{ChristofWachsmuth2017:1},
second subderivatives have been utilized to derive second-order conditions for infinite-dimensional constrained minimization problems.
Subsequently, this approach has been further developed
in the contributions
\cite{wachsmuth2,BorchardWachsmuth2023}.

In the present work, we follow an implicit approach to computing $G''( \imath_{\mathcal{M}} \bar u, \Bar{p}; \cdot)$ in order to avoid the necessity of working on the dual space~$\Cod$. More in detail, we first prove the strong-strong twice epi-differentiability of the preadjoint of~$G$, which is now a functional over~$\Co$, and derive a candidate for $G''( \imath_{\mathcal{M}} \bar u, \Bar{p}; \cdot)$ via arguments from convex duality. Conceptually, this is closest related to the exposition in~\cite{wachsmuth2} where the authors consider a similar treatment for integral functionals defined on~$L^1(\Omega)$.   
Finally, we also mention 
\cite{ChristofWachsmuth2017:3}
where the authors address the differentiability of
parameter-to-solution maps associated to parameter-dependent variational inequalities by using second subderivatives.

The notion of non-degeneracy of certain dual variables was possibly first coined in the seminal paper~\cite{duval_peyre_2014} where it was used to address the recovery properties of Radon norm variational regularization in inverse problems. Similar applications can be found in, e.g.,~\cite{poon_2018,poon_2023,huynh2023optimal}. Subsequently, these structural assumptions have also become a central pillar for the analysis of finite element discretizations of PDE-constrained optimal control problems with sparse controls, see, e.g.,~\cite{vexler}, as well as the derivation of fast convergence results for numerical algorithms such as generalized conditional gradient or exchange type methods,~\cite{pieper_walter_2021,flinth}, or over-parametrized gradient descent,~\cite{chizat}. In this context, we also point out that~\cite[Proposition 3.1]{chizat} deduces local quadratic growth w.r.t the bounded Lipschitz norm from a slightly stronger analogue of~\eqref{eq:strictcompintro} and~\eqref{eq:nondegintro}.

If~$Y$ is finite dimensional and~$L$ is convex, we also note that Problem~\eqref{def:sparseintro} can be interpreted as the Fenchel dual of a semi-infinite, state-constrained problem. From this perspective,~$\bar{u}$ corresponds to the Lagrange multiplier for the state constraint imposed on the optimal state~$\Bar{p}$. Moreover,~\eqref{eq:strictcompintro} can be interpreted as a strict complementarity condition. In this context, assumptions on the curvature of~$\bar{p}$ in the vicinity of~$\supp \Bar{u}$, have been utilized, e.g., for the derivation of approximation error estimates,~\cite{neitzelsemi}.
\subsection{Outline} \label{subsec:outline}
After introducing the necessary notation in \cref{sec:notation}, we collect some basic but important results on both, sparse minimization problems and second-order conditions based on second subderivatives, in \cref{sec:sparseoptistart,sec:nogapabstract}, respectively. Subsequently, the latter are applied to Problem~\eqref{def:sparseintro}. Our main abstract result, \cref{thm:main}, as well as tangible conclusions based upon it are stated in \cref{sec:nogapmeasures}. The following sections, \cref{sec:impliesSSC,sec:strucimplyNDC}, are dedicated to its technical proof. Finally, \cref{app:quasi} contains some required auxiliary results used throughout the paper.   
\section{Notation} \label{sec:notation}
Let $d$ be a positive integer.
In the following, let~$\Omega \subset \R^d$ be compact and assume that $\Omega$ equals the closure of its interior. By $\BB(\Omega)$ we denote the Borel $\sigma$-algebra of $\Omega$. We further introduce the space of Radon measures~$\M$ on~$\Omega$ as the topological dual space of~$\C$ which is equipped with the maximum norm
\begin{align*}
    	\cnorm{\varphi} \coloneqq \max\set{ \abs{\varphi(x)} \given x \in \Omega } \quad \forall \varphi \in \C.
\end{align*}
The corresponding duality pairing is given by
\begin{align*}
    \dual{\varphi}{u}= \int_\Omega \varphi \de u \quad \forall \varphi \in \C, u \in \M.
\end{align*}
We equip~$\M$ with the canonical dual norm
\begin{align*}
    \mnorm{u} \coloneqq \max\set{ \dual{\varphi}{u}\given \cnorm{\varphi} \leq 1 } \quad \forall u \in \M
\end{align*}
making it a Banach space.

For a function $\varphi \colon \Omega \to \R$, we define its Lipschitz constant
\begin{equation}
	\label{eq:lipschitz_constant}
	\lip(\varphi) \coloneqq \sup\set*{\frac{\varphi(x_1)-\varphi(x_2)}{\abs{x_1-x_2}} \given x_1,x_2 \in \Omega, x_1 \ne x_2}.
\end{equation}
In particular, $\varphi$ is Lipschitz continuous if and only if $\lip(\varphi) < \infty$.

The space $\Co$ consists of all functions
$\varphi \in \C$ which are continuously differentiable on $\Intr(\Omega)$
such that $\nabla \varphi$ can be continuously extended from $\Intr(\Omega)$
to $\Omega$.
Under the canonical norm
\begin{align*}
	\conorm{\varphi} = \max \set{ \cnorm{\varphi},\cdnorm{\nabla \varphi} }
	\qquad \text{where} \qquad  \cdnorm{\varphi} \coloneqq \sup\set{ \abs{\varphi(x)} \given x \in \Omega } \qquad  \forall \varphi \in \Co,
\end{align*}
the space $\Co$ becomes a Banach space, see \cite[Section~1.6]{Alt2011}.
Here, $\abs{\cdot}$ is the Euclidean norm on $\R^d$.

For all $x \in \R^d$ and $r > 0$, we denote by
\begin{equation*}
	B_r(x)
	:=
	\set{
		y \in \R^d
		\given
		\abs{x - y} \le r
	}
\end{equation*}
the closed ball around $x$ with radius $r$.

Finally, we abbreviate $A v^2 \coloneqq v^\top A v$ for $A \in \R^{d \times d} $ and~$v \in \R^d$.
Similarly,
if $b \colon X \times X \to \R$ is a bilinear mapping,
we abbreviate $b h^2 := b[h,h]$.
This will be used, in particular, for second derivatives.

\section{Minimization problems in spaces of measures} \label{sec:sparseoptistart}
In this section, we collect some preliminary results on minimization problems of the form~\eqref{def:sparseintro}. Where possible, proofs are omitted for the sake of brevity.
\begin{assumption} \label{ass:functions}
    Throughout the paper, we assume that:
    \begin{enumerate}[label=\textbf{A\arabic*}]
        \item The space of observations~$Y$ is a separable Hilbert space with norm~$\ynorm{\cdot}=\sqrt{(\cdot,\cdot)_Y}$.
        \item The functional~$\loss \colon Y \to \R  $ is of class~$\mathcal{C}^2$.
        \item The kernel~$k \colon \Omega \to Y$ satisfies~$k \in C^{1, \gamma}(\Omega; Y)$ for some~$0< \gamma \leq 1$.
        \item The set $\Omega \subset \R^d$ is compact,
          it equals the closure of its interior $\Intr(\Omega)$
          and $\Intr(\Omega)$ is uniformly locally quasiconvex, see \cref{def:LUQ}.
    \end{enumerate}
\end{assumption}
First, we note that the regularity of~$k$ implies weak*-to-strong continuity of the integral operator~$K$.
\begin{lemma} \label{lem:proposofsparseop}
    Let \cref{ass:functions} hold. Then the operator~$K \colon \M \to Y$ as defined in~\eqref{def:integraloperator} is linear and sequentially weak*-to-strong continuous. Moreover, we have~$K=(K_*)^*$ where the preadjoint~$K_* \colon Y \to \C$ is given by
    \begin{align*}
        \lbrack K_* y \rbrack (x) = (k(x),y)_Y \quad \forall y \in Y,~x \in \Omega. 
    \end{align*}
\end{lemma}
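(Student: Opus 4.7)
The plan is to decompose the statement into three pieces: (i) well-posedness and linearity of $K$, together with its identification as the adjoint of $K_*$; (ii) compactness of the preadjoint $K_*$; and (iii) upgrading sequential weak*-to-weak continuity of $K$ to weak*-to-strong continuity via this compactness.

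For (i), I would first observe that since $k \colon \Omega \to Y$ is continuous and $\Omega$ is compact, $k$ is bounded, so the integral $\int_\Omega k(x) \de u(x)$ is well-defined as a Bochner integral for every $u \in \M$ and $Ku \in Y$ with $\|Ku\|_Y \le \cnorm{k}_{C(\Omega;Y)} \mnorm{u}$. Linearity is then immediate from linearity of the integral. To identify the preadjoint, fix $y \in Y$; continuity of $k$ and $(\cdot, y)_Y$ gives $K_* y \colon x \mapsto (k(x), y)_Y \in \C$, and for every $u \in \M$ the interchange of inner product and integral yields
\begin{equation*}
    (Ku, y)_Y = \Bigl( \int_\Omega k(x) \de u(x), y \Bigr)_Y = \int_\Omega (k(x), y)_Y \de u(x) = \dual{K_* y}{u}.
\end{equation*}
Since $\M = C(\Omega)^*$, this is precisely the statement $K = (K_*)^*$, with $K_* \colon Y \to \C$ continuous by the estimate $\cnorm{K_* y} \le \cnorm{k}_{C(\Omega;Y)} \ynorm{y}$.

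For (ii), I would show $K_*$ is compact. The key ingredient is that $k \in C^{1,\gamma}(\Omega; Y)$ implies $k$ is Lipschitz on $\Omega$, so for $y$ in the unit ball of $Y$ the function $K_* y$ is Lipschitz with constant bounded by $\lip(k)$ (uniformly in $y$), and additionally uniformly bounded by $\cnorm{k}_{C(\Omega;Y)}$. By Arzelà--Ascoli, $K_*(B_Y)$ is relatively compact in $\C$, so $K_*$ is compact.

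For (iii), Schauder's theorem gives that $K = (K_*)^*$ is compact as a map $\M \to Y$. Now let $u_n \weakstar u$ in $\M$. By the Banach--Steinhaus theorem, $\seq{u_n}$ is bounded in $\M$, hence $\seq{Ku_n}$ is relatively compact in $Y$ by compactness of $K$. Testing against any $y \in Y$ gives $(Ku_n, y)_Y = \dual{K_* y}{u_n} \to \dual{K_* y}{u} = (Ku, y)_Y$, so $Ku_n \weak Ku$ in $Y$. A weakly convergent sequence with relatively compact image must converge strongly to the weak limit, which gives $Ku_n \to Ku$ in $Y$ as desired. I do not anticipate any real obstacle here; the statement is a standard packaging of compactness and adjoint considerations, and the only point requiring a hair of care is verifying the Fubini-type exchange in the adjoint computation, which is justified by continuity of $k$ and finiteness of $|u|$.
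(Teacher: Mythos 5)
Your proof is correct but follows a genuinely different route than the paper. The paper argues the weak*-to-strong continuity directly: given $u_k \weakstar \bar u$, it picks $y_k$ in the unit sphere of $Y$ attaining $\ynorm{K u_k}$, shows via weak lower semicontinuity that $\ynorm{K\bar u} \le \liminf \dual{K_* y_k}{u_k}$, passes to a weakly convergent subsequence $y_k \weak \bar y$, and uses Arzelà--Ascoli on $\seq{K_* y_k}$ (equicontinuity from Hölder regularity of $k$) to upgrade to strong convergence $K_* y_k \to K_* \bar y$ in $\C$, yielding norm convergence $\ynorm{Ku_k} \to \ynorm{K\bar u}$ and hence strong convergence via the Radon--Riesz property. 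You instead prove compactness of the preadjoint $K_*$ via Arzelà--Ascoli once and for all, invoke Schauder's theorem to get compactness of $K$, and then use the standard fact that a compact operator maps weak*-convergent (hence bounded, by Banach--Steinhaus) sequences to strongly convergent ones. Your approach is more structural and arguably shorter, at the price of invoking Schauder's theorem as a black box; the paper's argument is more elementary and self-contained, staying at the level of explicit sequences. Both rely on the same analytic core (equicontinuity of the family $\{K_* y : \ynorm{y} \le 1\}$ following from the regularity of $k$, justified here by \cref{lem:LUC_implies_C1_embeds_Lip} and Assumption A4), and both are correct.
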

\begin{proof}
 Note that~$K_*$ is linear and bounded since~$k(x) \in C(\Omega;Y)$. Moreover, we observe
 \begin{align*}
     \dual{K_* y}{u}= \int_\Omega (k(x),y)_Y \de u(x)= \left(y, \int_\Omega k(x) \de u(x) \right)_Y=(y, Ku)_Y
 \end{align*}
 for all~$y \in Y$ and~$u \in \M$. Thus, we have~$(K_*)^*=K$. In particular,~$K$ is linear and bounded as well as sequentially weak*-to-weak continuous. Let~$\seq{u_k} \subset \M$ be a weak* convergent sequence with limit~$\Bar{u}$. Then there exists a sequence~$\seq{y_k} \subset Y$ with~$\ynorm{y_k}=1$,~$\ynorm{Ku_k}=(y_k, Ku_k)_Y$ and thus 
 \begin{align} \label{eq:lschelp}
     \ynorm{K\Bar{u}} \leq \liminf_{k \rightarrow \infty} \ynorm{Ku_k}= \liminf_{k \rightarrow \infty} (y_k, Ku_k)_Y = \liminf_{k \rightarrow \infty} \dual{K_* y_k}{u_k}.
 \end{align}
 Now, consider an arbitrary weakly convergent subsequence of~$\seq{y_k}$, denoted by the same symbol, with limit~$\Bar{y} \in Y$,~$\ynorm{\Bar{y}}\leq 1$. We readily verify that~$K_* y_k$ converges weakly to~$K_* \Bar{y}$ in~$\C$. Moreover, since~$k \in C^{0,\gamma}(\Omega;Y)$, the sequence $\seq{K_* y_k}$ is uniformly bounded in~$C^{0,\gamma}(\Omega)$. Hence, due to Arzel\`a--Ascoli theorem, we conclude~$K_* y_k \rightarrow K_* \Bar{y}$ in~$\C$ and, finally,
 \begin{align*}
     \lim_{k \rightarrow \infty} \dual{K_* y_k}{u_k}=\dual{K_* \Bar{y}}{\Bar{u}}\leq \ynorm{K\Bar{u}}.
 \end{align*}
 Together with~\eqref{eq:lschelp}, we arrive at~$\lim_{k \rightarrow \infty} \ynorm{Ku_k}= \ynorm{K\Bar{u}}$.
 A usual subsequence-subsequence argument yields $K u_k \to K \Bar u$ for the entire sequence,
 i.e.,~$K$ is weak*-to-strong continuous.
\end{proof}
In what follows, we denote by~$\alpha \partial \mnorm{u}$ the convex subdifferential of~$\alpha \mnorm{\cdot}$ at~$u \in \M$.
Next, we summarize necessary and sufficient first-order optimality conditions for minimizers of Problem~\eqref{def:sparseintro}. 
\begin{proposition} \label{prop:optimalitysparse}
    Given~$\Bar{u} \in \M$, define
    \begin{align*}
        \Bar{p}=- K_* \nabla \loss(K \bar{u})=- (k(\cdot), \nabla \loss(K \Bar{u}))_Y \in \Co.
    \end{align*}
    If the measure~$\Bar{u}$ is a minimizer of~\eqref{def:sparseintro},
    then the following equivalent conditions hold.
    \begin{enumerate}
        \item We have~$\bar{p} \in \alpha \partial \mnorm{\Bar{u}}$.
        \item There holds~$|\Bar{p}(x)|\leq \alpha$ for all $x \in \Omega$ as well as~$\dual{\Bar{p}}{\Bar{u}}=\alpha \mnorm{\Bar{u}}$.
        \item The Jordan decomposition~$\Bar{u}=\Bar{u}_+-\Bar{u}_{-}$ satisfies
        \begin{align*}
     \supp \Bar{u}_{\pm} \subset \set*{ x \in \Omega \given \Bar{p}(x)= \pm \alpha} \quad  \text{as well as} \quad |\Bar{p}| \leq \alpha \quad \text{on}~ \Omega
     \end{align*}
    \end{enumerate}
    If $\loss$ is convex and if these conditions hold,
    then $\bar u$ is a minimizer of~\eqref{def:sparseintro}.
\end{proposition}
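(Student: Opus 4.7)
The plan is to prove the proposition in three stages: first, derive condition (1) from Fermat's rule applied to the decomposition of $J$ into a smooth part plus the convex norm; second, show the equivalence of (1), (2), (3) via standard convex analysis of the norm subdifferential combined with properties of the Jordan decomposition; third, invoke convexity of $J$ to obtain sufficiency when $L$ is convex.

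For the necessity part, I would observe that $L \circ K$ is continuously differentiable on $\M$: by \cref{lem:proposofsparseop}, $K$ is linear and continuous, and by \cref{ass:functions}, $L$ is $C^2$, so by the chain rule the G\^ateaux derivative of $L \circ K$ at $\bar u$ acting on $v \in \M$ is $(\nabla L(K \bar u), Kv)_Y = \dual{K_* \nabla L(K\bar u)}{v} = -\dual{\bar p}{v}$. Since $\alpha \mnorm{\cdot}$ is proper, convex, continuous on $\M$, the sum rule for convex subdifferentials applies trivially (one summand is differentiable), so the Fermat condition $0 \in \partial J(\bar u)$ reduces to $\bar p \in \alpha \partial \mnorm{\bar u}$, which is (1).

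For the equivalence (1) $\Leftrightarrow$ (2), I would use the standard characterization of the subdifferential of a norm: since $\mnorm{\cdot}$ is the dual norm on $\M$ and $\bar p \in \Co \subset \C$ is identified with an element of $\M^*$ via the duality pairing, the condition $\bar p \in \alpha \partial \mnorm{\bar u}$ is equivalent to $\cnorm{\bar p} \leq \alpha$ together with $\dual{\bar p}{\bar u} = \alpha \mnorm{\bar u}$; and $\cnorm{\bar p} \leq \alpha$ is exactly $\abs{\bar p(x)} \leq \alpha$ for all $x \in \Omega$. For (2) $\Leftrightarrow$ (3), I would use the Jordan decomposition $\bar u = \bar u_+ - \bar u_-$ with $\bar u_\pm \geq 0$ mutually singular so that $\mnorm{\bar u} = \bar u_+(\Omega) + \bar u_-(\Omega)$. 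From $\abs{\bar p} \leq \alpha$ we have $\int_\Omega (\alpha - \bar p) \de \bar u_+ \geq 0$ and $\int_\Omega (\alpha + \bar p) \de \bar u_- \geq 0$. Summing these and using $\dual{\bar p}{\bar u} = \int \bar p \de \bar u_+ - \int \bar p \de \bar u_-$ yields that the equality $\dual{\bar p}{\bar u} = \alpha \mnorm{\bar u}$ forces both integrals to vanish. Since both integrands are nonnegative, $\bar p = \alpha$ holds $\bar u_+$-a.e.\ and $\bar p = -\alpha$ holds $\bar u_-$-a.e.; continuity of $\bar p$ then propagates these identities to the closed sets $\supp \bar u_\pm$, giving (3). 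The converse implication (3) $\Rightarrow$ (2) is a direct computation from the definition of $\supp \bar u_\pm$ and $\mnorm{\bar u} = \bar u_+(\Omega) + \bar u_-(\Omega)$.

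Finally, for sufficiency under convex $L$, the composition $L \circ K$ is convex (linear composed with convex), hence $J$ is convex, and the necessary first-order condition derived above is also sufficient for global minimality. The only step that requires a small calculation (rather than a direct invocation of a named result) is the Jordan decomposition argument for (2) $\Leftrightarrow$ (3); in particular, using continuity of $\bar p$ to pass from the $\bar u_\pm$-a.e.\ statement to a statement about the topological support. No step is a serious obstacle since all ingredients are standard convex analysis in dual Banach spaces.
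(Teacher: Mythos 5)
The paper gives no proof of this proposition; the section opens with ``Where possible, proofs are omitted for the sake of brevity,'' and this is one such case, so there is no paper argument to compare against. Your proof is correct and follows the standard route: a generalized Fermat-type first-order condition for the sum of a differentiable and a proper convex lower semicontinuous functional, the dual-norm characterization of $\partial\mnorm{\cdot}$ restricted to predual elements $\bar p\in\C\hookrightarrow\M^*$, and the Jordan-decomposition slack calculation with continuity of $\bar p$ propagating the a.e.\ identities to $\supp\bar u_\pm$.

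One small point of phrasing worth tightening: you invoke ``the sum rule for convex subdifferentials'' to obtain $\bar p \in \alpha\partial\mnorm{\bar u}$, but $L$ (hence $J$) is not assumed convex in the necessity direction, so ``$0\in\partial J(\bar u)$'' with the convex subdifferential is not literally meaningful. What you actually use is the Pshenichnyi--Rockafellar-type fact that if $F$ is G\^ateaux differentiable, $G$ is proper convex, and $\bar u$ locally minimizes $F+G$, then $-F'(\bar u)\in\partial G(\bar u)$; the one-line proof is $0 \le \frac{F(\bar u+tv)-F(\bar u)}{t} + G(\bar u+v)-G(\bar u)$ for $t\in(0,1]$ by convexity of $G$, then let $t\searrow 0$. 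This does not require convexity of $F$. With that correction of wording the argument is complete and standard.
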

\section{No-gap second-order conditions for abstract minimization problems}
\label{sec:nogapabstract}
Next, we summarize tangible results on no-gap second-order conditions for nonsmooth minimization problems. Subsequently, these will be specialized to the problem at hand. More in detail, we consider 
\begin{equation}
	\label{eq:abstract}
	\min_{x\in X}
	\left \lbrack F(x) + G(x) \right \rbrack
	.
\end{equation}

We start by fixing the setting.
\begin{assumption} \label{ass:abstract}
The following assumptions are made throughout this section.
\begin{enumerate}
	\item
		There holds~$X=Z^*$ for a separable Banach space~$Z$.
	\item
		The functional $G \colon X \to (-\infty,\infty]$
		and
		the point
		$\bar x \in \dom(G)$ are
		given.
	\item
		Associated with the functional~$F \colon \dom(G) \to \R$,
		there exist
		$F'(\bar x) \in Z$
		and
		a bounded bilinear form
		$F''(\bar x) \colon X \times X \to \R$
		such that
		\begin{equation}
			\label{eq:hadamard_taylor_expansion}
			\lim_{k \to \infty}
			\frac{F(\bar x + t_k h_k) - F(\bar x) - t_k F'(\bar x) h_k - \frac12 t_k^2 F''(\bar x) h_k^2}{t_k^2}
			=
			0
		\end{equation}
		holds
		for all sequences $\seq{t_k} \subset (0,\infty)$, $\seq{h_k} \subset X$
		satisfying $t_k \searrow 0$,  $h_k \weakstar h \in X$
		and
		$\bar x + t_k h_k \in \dom(G)$.
\end{enumerate}
\end{assumption}
Loosely speaking, the ``smooth'' part $F$ of the objective admits a second-order Taylor expansion around~$\bar{x}$.
The bilinear form~$F''(\bar{x})$ can be interpreted as its Hessian.
Regarding the potentially nonsmooth term~$G$, we rely on the notion of the weak* second subderivative.
\begin{definition} \label{def:weakssubderivative}
	Let~$x \in \dom(G)$ and~$p\in Z$ be given.
	The \emph{weak* second subderivative}
 \begin{align*}
     G''(x,p;\cdot)\colon X \to [-\infty,\infty]
 \end{align*}
	of~$G$ at~$x$ for~$p$ is given by
\begin{align*}
	G''(x,p;h) \coloneqq \inf\set*{
		\liminf_{k \rightarrow \infty} \frac{G(x+t_k h_k)-G(x)-t_k \dual{z}{h_k}}{t^2_k/2}
		\given
		t_k \searrow 0,~h_k \weakstar h
	}
	.
\end{align*}
The functional~$G$ is called \emph{strictly twice epi-differentiable} at~$x$ for~$p$
if for every~$h \in X$ and every sequence~$\seq{t_k}\subset(0,\infty)$ with $t_k \searrow 0$
there exists a sequence~$\seq{h_k} \subset X$ with
\begin{align*}
	G''(x,p;h)= \lim_{k \rightarrow \infty} \frac{G(x+t_k h_k)-G(x)-t_k \dual{z}{h_k}}{t^2_k/2},
	\quad h_k \weakstar h,~\norm{h_k}_X \to \norm{h}_X. 
\end{align*}
\end{definition}
We use the following result,
see \cite[Theorem~2.20]{BorchardWachsmuth2023}.
\begin{theorem} \label{thm:abstractSSC}
Let \cref{ass:abstract} hold and assume that~$h \mapsto F''(\bar{x})h^2 $ is sequentially weak* continuous.
Then the following assertions are equivalent.
\begin{enumerate}[label=(\roman*)]
	\item There exist~$\eps>0$ and~$\gamma >0$
		satisfying the second-order growth condition
		\begin{align*}
			(F+G)(x)-(F+G)(\bar{x}) \geq \frac{\gamma}{2} \norm{x-\bar{x}}^2_X
			\qquad \forall x \in X, \norm{x-\bar{x}}_X \leq \eps.
		\end{align*}
	\item We have
		the sufficient second-order condition
		\begin{align} \label{eq:SSC}
			\tag{SSC}
			F''(\bar{x})h^2+ G''(\bar{x},-F'(\bar{x});h) >0
			\qquad\forall h \in X \setminus \set{0}
		\end{align}
		as well as
		the non-degeneracy condition
	\begin{equation*}
		\label{eq:NDC}
		\tag{\textup{NDC}}
		\begin{aligned}
			&\text{for all $\seq{t_k} \subset (0,\infty)$, $\seq{h_k} \subset X$
			with $t_k \searrow 0$, $h_k \weakstar 0$}
			\text{ and $\norm{h_k}_X = 1$, we have }\\
			&\qquad
			\liminf_{k \to \infty} \parens[\bigg]{
				\frac1{t_k^2} \parens[\big]{G(\bar x + t_k h_k) - G(\bar x)}
				+
				\dual{F'(\bar x)}{h_k / t_k}
				+
				\frac12 F''(\bar x) h_k^2
			}
			> 0
			.
		\end{aligned}
	\end{equation*}
\end{enumerate}
\end{theorem}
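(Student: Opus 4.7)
The plan is to establish the two implications separately: (i)$\Rightarrow$(ii) follows by direct substitution into the growth inequality and a Taylor expansion, while (ii)$\Rightarrow$(i) is handled by contradiction, exploiting the weak* sequential compactness of the unit ball in $X = Z^*$ (Banach--Alaoglu, using separability of $Z$).

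For (i)$\Rightarrow$(ii), I would fix arbitrary sequences $t_k \searrow 0$ and $h_k \weakstar h$ with $\bar x + t_k h_k \in \dom(G)$, substitute $x = \bar x + t_k h_k$ into the growth inequality, replace $F(\bar x + t_k h_k) - F(\bar x)$ using \eqref{eq:hadamard_taylor_expansion}, and rearrange to
\begin{equation*}
\frac{G(\bar x + t_k h_k) - G(\bar x) + t_k \dual{F'(\bar x)}{h_k}}{t_k^2/2}
\ge \gamma \norm{h_k}_X^2 - F''(\bar x) h_k^2 + o(1).
\end{equation*}
Passing to the liminf, invoking weak* lower semicontinuity of $\norm{\cdot}_X$ and sequential weak* continuity of $h \mapsto F''(\bar x) h^2$, delivers $F''(\bar x) h^2 + G''(\bar x, -F'(\bar x); h) \ge \gamma \norm{h}_X^2 > 0$ whenever $h \ne 0$, which is \eqref{eq:SSC}. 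An analogous computation for sequences with $\norm{h_k}_X = 1$ and $h_k \weakstar 0$, this time normalized by $t_k^2$ instead of $t_k^2/2$, immediately yields \eqref{eq:NDC}.

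For the converse, I would negate quadratic growth to extract $\seq{x_n} \subset \dom(G) \setminus \set{\bar x}$ with $x_n \to \bar x$ in norm and
\begin{equation*}
(F+G)(x_n) - (F+G)(\bar x) < \tfrac{1}{2n}\norm{x_n - \bar x}_X^2.
\end{equation*}
Setting $t_n := \norm{x_n - \bar x}_X \searrow 0$ and $h_n := (x_n - \bar x)/t_n$ with $\norm{h_n}_X = 1$, Banach--Alaoglu yields a subsequence with $h_n \weakstar h \in X$, $\norm{h}_X \le 1$. In the case $h = 0$, dividing the defining inequality by $t_n^2$ and expanding $F$ via \eqref{eq:hadamard_taylor_expansion} bounds the liminf appearing in \eqref{eq:NDC} above by $1/(2n) \to 0$, a contradiction. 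In the case $h \ne 0$, dividing by $t_n^2/2$ and keeping $t_n \dual{F'(\bar x)}{h_n}$ on the left bounds the difference quotient defining $G''(\bar x, -F'(\bar x); h)$ from above by $-F''(\bar x) h_n^2 + o(1)$; the sequential weak* continuity of $h \mapsto F''(\bar x) h^2$ then forces $F''(\bar x) h^2 + G''(\bar x, -F'(\bar x); h) \le 0$, contradicting \eqref{eq:SSC}.

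The main obstacle I anticipate is the case distinction in the contradiction step: the two subcases demand different normalizations ($t_n^2$ versus $t_n^2/2$) and invoke disjoint parts of~(ii), so one must verify that the subsequence extracted via Banach--Alaoglu unambiguously lands in exactly one of them and that the Hadamard-type Taylor expansion \eqref{eq:hadamard_taylor_expansion} is legitimately applicable along the specific sequences produced (this last point being automatic from $x_n \in \dom(G)$, which is forced by the finiteness of $(F+G)(x_n)$).
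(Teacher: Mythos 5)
The paper does not prove this theorem: it cites \cite[Theorem~2.20]{BorchardWachsmuth2023} and moves on. So there is no ``paper proof'' to compare against; what you have written is a self-contained reconstruction, and it is essentially correct. Both directions are the standard argument one expects for a no-gap second-order theorem in this dual-space framework, and your handling of the two cases $h = 0$ versus $h \ne 0$ in the contradiction step is fine: the Banach--Alaoglu subsequence falls into exactly one case, and the feasibility $x_n \in \dom(G)$ needed for \eqref{eq:hadamard_taylor_expansion} is, as you observe, forced by finiteness of $(F+G)(x_n)$.

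Two small points you glossed over and should mention in a polished write-up. First, in the implication (i)$\Rightarrow$(ii), the sequences entering the definition of $G''(\bar x, -F'(\bar x);h)$ are \emph{arbitrary} with $t_k \searrow 0$ and $h_k \weakstar h$; you restrict to those with $\bar x + t_k h_k \in \dom(G)$. That is legitimate, but one should say why: if $\bar x + t_k h_k \notin \dom(G)$ for infinitely many $k$, those terms contribute $+\infty$ to the quotient and do not lower the liminf, while along the complementary subsequence (which still weak*-converges to $h$ and has $t_k \searrow 0$) the Taylor expansion \eqref{eq:hadamard_taylor_expansion} applies and the estimate goes through. One also needs $t_k \norm{h_k}_X \le \eps$ for large $k$; this holds because weak*-convergent sequences are norm-bounded by the uniform boundedness principle. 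Second, in (ii)$\Rightarrow$(i) you set $t_n := \norm{x_n - \bar x}_X$; this is positive and tends to $0$ but need not be monotone, whereas \eqref{eq:NDC} and the definition of $G''$ are stated with $t_k \searrow 0$. This is repaired by passing to a subsequence along which $t_n$ is strictly decreasing, simultaneously with the Banach--Alaoglu extraction, which costs nothing. Neither issue is a gap in the logic, but both deserve a sentence.
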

We point out that this equivalency still holds under slightly weaker assumptions, see, e.g., the stated reference for more details.
From a practical perspective and for particular examples, it is desirable to link the abstract conditions~\eqref{eq:SSC} and~\eqref{eq:NDC} to tangible structural properties of the problem under consideration. The following auxiliary results will prove useful in this regard. Loosely speaking, these allow to circumvent the necessity to work on the potentially complicated dual space~$X$ by studying related properties of suitable pre-conjugates~$H \colon Z \to (-\infty, \infty] $,~$H^*=G$.    
For this purpose, we require the following strong analogue of \cref{def:weakssubderivative} for~$H$.
\begin{definition} \label{def:strong-strong}
Let~$p \in Z $ and~$x \in X$ be given. The \textit{strong second subderivative}
\begin{align*}
  H''(p,x;\cdot)\colon Z \to [-\infty,\infty]  
\end{align*}
of~$H$ at~$p$ for~$x \in X $ is given by
\begin{align*}
    H''(p,x;z)
    =
    \inf \set*{
        \liminf_{k \rightarrow \infty} \frac{H(p+t_k z_k)-H(p)-t_k \dual{z_k}{x}}{t^2_k/2}
        \given
        t_k \searrow 0,~z_k \rightarrow z~\text{in}~Z
    }.
\end{align*}
Accordingly, the functional~$H$ is called strongly-strongly twice epi-differentiable at~$p$ for~$x$
if for every~$z \in Z$ and every sequence~$\seq{t_k} \subset (0,\infty)$ with $t_k \searrow 0$,
there exists a sequence~$\seq{z_k} \subset Z$ with
\begin{align*}
H''(p,x;z)= \lim_{k \rightarrow \infty} \frac{H(p+t_k z_k)-H(p)-t_k \dual{z_k}{x}}{t^2_k/2}, \quad z_k \rightarrow z. 
\end{align*}
\end{definition}
We start by recalling that the convex conjugate of~$H''(p,x;\cdot)$ provides a lower bound on~$G''(x,p;\cdot)$,
see \cite[Lemma~4.4]{wachsmuth2}.
\begin{lemma} \label{lem:conjugatelowergen}
Assume that~$H \colon Z \to (-\infty,\infty]$ is proper, convex, lower semicontinuous and satisfies~$H^*=G$.
Moreover, let~$x \in \dom(G)$ and~$p \in Z$ with~$p \in \partial G(x)$ be given.
If $H$ is strongly-strongly twice epi-differentiable at $p$ for $x$, then there holds 
\begin{align*}
\frac12 G''(x,p; h) \geq \left( \frac12 H''(p,x;\cdot) \right)^*(h) \qquad\forall h \in X.
\end{align*}
\end{lemma}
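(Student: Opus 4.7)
The plan is to exploit the convex duality $G=H^*$. Since $H$ is proper, convex, and lower semicontinuous, the Fenchel--Moreau theorem gives $G^* = H^{**} = H$, so that the assumption $p \in \partial G(x)$ is equivalent to the Fenchel--Young equality $G(x)+H(p)=\dual{p}{x}$. This identity is the pivot I would use to trade the difference quotient of $G$ (with weak* convergent increments $h_k$) against the difference quotient of $H$ (with strongly convergent increments $z_k$).

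I would fix $h \in X$ together with arbitrary admissible sequences $t_k \searrow 0$ and $h_k \weakstar h$ from the definition of $G''(x,p;h)$. For every test vector $z \in Z$ I invoke the strong-strong twice epi-differentiability of $H$ at $p$ for $x$ to obtain a recovery sequence $z_k \to z$ in $Z$ attaining the limit $H''(p,x;z)$. Applying the Fenchel--Young inequality at the pair $(x+t_k h_k, p+t_k z_k)$, subtracting $G(x)$ and $t_k\dual{p}{h_k}$, and collapsing the zeroth-order terms via $G(x)+H(p)=\dual{p}{x}$ yields the key estimate
\begin{equation*}
 \frac{G(x+t_k h_k)-G(x)-t_k\dual{p}{h_k}}{t_k^2/2}
 \geq
 -\frac{H(p+t_k z_k)-H(p)-t_k\dual{z_k}{x}}{t_k^2/2}
 + 2\dual{z_k}{h_k}.
\end{equation*}
Passing to the $\liminf_{k\to\infty}$, the first term on the right tends to $-H''(p,x;z)$ by the choice of $z_k$, while $\dual{z_k}{h_k}\to\dual{z}{h}$ follows from the splitting $\dual{z_k}{h_k}=\dual{z}{h_k}+\dual{z_k-z}{h_k}$ together with the weak* convergence $h_k \weakstar h$ on the first summand and the boundedness of $\seq{h_k}$ (by Banach--Steinhaus, since $X=Z^*$) combined with $z_k\to z$ in $Z$ on the second. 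Taking the infimum over admissible $(t_k,h_k)$ and then the supremum over $z\in Z$, I arrive at
\begin{equation*}
 G''(x,p;h) \geq \sup_{z\in Z}\bracks[\big]{2\dual{z}{h}-H''(p,x;z)} = 2\parens[\big]{\tfrac12 H''(p,x;\cdot)}^*(h),
\end{equation*}
which is equivalent to the stated inequality.

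The main obstacle I anticipate is purely one of careful bookkeeping: verifying the sign and scaling of every term in the Fenchel--Young computation, and justifying the mixed-topology limit $\dual{z_k}{h_k}\to\dual{z}{h}$ via Banach--Steinhaus. No additional compactness or smoothness is required, and the degenerate case $H''(p,x;z)=+\infty$ causes no harm, since the corresponding lower bound is then $-\infty$ and contributes nothing to the supremum over $z$.
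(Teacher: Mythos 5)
Your proposal is correct and takes the expected Fenchel--Young approach. The paper defers the proof to \cite[Lemma~4.4]{wachsmuth2}, but the structure you lay out---using $G^*=H$, the Fenchel--Young equality at $(x,p)$, the Fenchel--Young inequality at $(x+t_kh_k,p+t_kz_k)$, the recovery sequence $z_k\to z$ from strong-strong twice epi-differentiability, and the mixed-topology limit $\dual{z_k}{h_k}\to\dual{z}{h}$ via Banach--Steinhaus and the splitting $\dual{z}{h_k}+\dual{z_k-z}{h_k}$---is exactly the argument this type of result rests on, and every step checks out.
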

Second, we provide a sufficient condition for~\eqref{eq:NDC} involving~$H$.
Note that this generalizes \cite[Lemma~4.9]{wachsmuth2}.
\begin{lemma}
    \label{lem:for_NDC}
    Assume that~$H \colon Z \to (-\infty,\infty]$ is proper, convex, lower semicontinuous and satisfies~$H^*=G$.
    Moreover, let~$\Bar{p} \in Z$ with~$\bar{p} \in \partial G(\Bar{x})$ as well as~$m\in\N$ be given.
    Fix some linearly independent $\zeta_1,\ldots,\zeta_m \in Z$ and define $H_2 \colon Z \to (-\infty, \infty]$
    via
    \begin{equation*}
        H_2\parens*{ \sum_{i = 1}^m \beta_i \zeta_i }
        =
        \frac12 \sum_{i = 1}^m \beta_i^2 + \dual*{\sum_{i = 1}^m \beta_i \zeta_i}{ \Bar{x}}
    \end{equation*}
    on~$\operatorname{span}\set{\zeta_1,\dots,\zeta_m}$
    and $H_2(z) = \infty$ outside of this linear hull.
    Let $\tilde H = H \oplus H_2$ denote the infimal convolution and suppose that
    \begin{equation}
        \label{eq:descent_lemma}
        \tilde H(p)
        \le
        H(\bar p)
        +
        \dual{p - \bar p}{\bar x}
        +
        \frac\Lambda2 \norm{p - \bar p}_Z^2
        \qquad\forall p \in Z, \norm{p - \bar p}_Z \le \eta
    \end{equation}
    holds for some $\Lambda, \eta > 0$.
    Then
    \begin{equation}
        \label{eq:ascent_lemma}
        G(x)
        \ge
        G(\bar x)
        +
        \dual{\bar p}{x - \bar x}
        +
        \frac1{2\Lambda} \norm{x - \bar x}_X^2
        -\frac12
        \sum_{i = 1}^m \abs{\dual{\zeta_i}{x - \bar x}}^2
        \qquad\forall x \in X, \norm{x - \bar x}_X \le \eta\Lambda.
    \end{equation}
    If further $X \ni h \mapsto F''(\bar x) h^2 \in \R$ is sequentially weak* lower semicontinuous
    and $\bar p = -F'(\bar x)$,
    then~\eqref{eq:NDC} holds.
\end{lemma}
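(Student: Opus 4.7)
The overall strategy is to use Fenchel--Young duality to convert the local quadratic upper bound on $\tilde H$ at $\bar p$ in \eqref{eq:descent_lemma} into a local quadratic lower bound on the conjugate $\tilde H^*$ at $\bar x$, and then to identify $\tilde H^*$ explicitly. For the conjugate identification, the infimal-convolution rule gives $\tilde H^* = H^* + H_2^* = G + H_2^*$. A direct computation on $\operatorname{span}\set{\zeta_1,\dots,\zeta_m}$, using linear independence of the $\zeta_i$, yields $H_2^*(x) = \tfrac12 \sum_{i=1}^m \abs{\dual{\zeta_i}{x - \bar x}}^2$; in particular $H_2^*(\bar x) = 0$. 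Moreover, the subgradient relation $\bar p \in \partial G(\bar x)$ is equivalent to $\bar x \in \partial H(\bar p)$, and $\bar x \in \partial H_2(0)$ is immediate from the definition of $H_2$, so the infimal-convolution subdifferential calculus applied to the decomposition $\bar p = \bar p + 0$ gives $\bar x \in \partial \tilde H(\bar p)$ and, via Fenchel--Young, $\tilde H(\bar p) = \dual{\bar p}{\bar x} - G(\bar x) = H(\bar p)$.

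For the descent-to-ascent step, fix $x \in X$ with $\norm{x - \bar x}_X \le \Lambda\eta$ and, for arbitrary $z \in Z$ with $\norm{z}_Z = 1$ and $t \in [-\eta,\eta]$, set $p = \bar p + t z$. Plugging the bound \eqref{eq:descent_lemma} into $\tilde H^*(x) \ge \dual{p}{x} - \tilde H(p)$ and rearranging gives
\begin{equation*}
    \tilde H^*(x) \ge G(\bar x) + \dual{\bar p}{x - \bar x} + t\dual{z}{x - \bar x} - \frac{\Lambda}{2} t^2.
\end{equation*}
The unconstrained maximizer over $t$ is $t^* = \dual{z}{x-\bar x}/\Lambda$, and the constraint $\abs{t^*}\le\eta$ is satisfied because $\abs{\dual{z}{x-\bar x}} \le \norm{x-\bar x}_X \le \Lambda\eta$. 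Substituting $t^*$ and taking the supremum over $\norm{z}_Z=1$, together with $\norm{x - \bar x}_X = \sup_{\norm{z}_Z\le 1}\dual{z}{x-\bar x}$, yields $\tilde H^*(x) \ge G(\bar x) + \dual{\bar p}{x-\bar x} + \tfrac{1}{2\Lambda}\norm{x-\bar x}_X^2$. Moving the explicit expression for $H_2^*(x)$ to the right-hand side produces \eqref{eq:ascent_lemma}.

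To derive \eqref{eq:NDC}, apply \eqref{eq:ascent_lemma} to $x = \bar x + t_k h_k$, which is admissible for all large $k$ since $\norm{t_k h_k}_X = t_k \to 0$. After division by $t_k^2$ and using $\bar p = -F'(\bar x)$, we obtain
\begin{equation*}
    \frac{G(\bar x + t_k h_k) - G(\bar x)}{t_k^2} + \dual{F'(\bar x)}{h_k/t_k}
    \ge
    \frac{1}{2\Lambda} - \frac12 \sum_{i=1}^m \abs{\dual{\zeta_i}{h_k}}^2.
\end{equation*}
Since each $\zeta_i \in Z$ and $h_k \weakstar 0$, the correction sum vanishes as $k \to \infty$. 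The remaining Hessian contribution $\tfrac12 F''(\bar x) h_k^2$ has nonnegative liminf thanks to sequential weak* lower semicontinuity and $h_k \weakstar 0$. Adding the two liminfs (both finite) shows the quantity in \eqref{eq:NDC} is at least $\tfrac{1}{2\Lambda} > 0$. The main subtlety in the proof is the third step: one must make sure that the unconstrained quadratic maximizer in $t$ still respects the locality constraint $\abs{t} \le \eta$ imposed by \eqref{eq:descent_lemma}, which is precisely why the ascent radius shrinks to $\Lambda\eta$; everything else is bookkeeping with Fenchel conjugates and subdifferentials.
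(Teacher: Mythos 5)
Your proof is correct and follows essentially the same route as the paper: compute $\tilde H^* = G + H_2^*$ via the infimal-convolution rule, establish the key identity $\tilde H(\bar p) = H(\bar p)$, convert the local upper bound on $\tilde H$ into a local lower bound on $\tilde H^* = G + H_2^*$, and then test \eqref{eq:ascent_lemma} with $x = \bar x + t_k h_k$. The two places where you differ from the paper are both cosmetic: you obtain $\bar x \in \partial\tilde H(\bar p)$ (and hence $\tilde H(\bar p)=H(\bar p)$) directly from $\bar x \in \partial H(\bar p) \cap \partial H_2(0)$ and the sum rule for infimal convolutions, whereas the paper argues via $\bar p \in \partial\tilde H^*(\bar x)$ and Fenchel--Young for the conjugate; and you carry out the descent-to-ascent step explicitly by maximizing over $t \in [-\eta,\eta]$ in $\tilde H^*(x) \ge \dual{\bar p + t z}{x} - \tilde H(\bar p + t z)$, which is precisely the argument the paper delegates to the cited \cite[Lemma~4.9]{wachsmuth2}. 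The one point to tighten in your final paragraph: rather than ``adding the two liminfs,'' it is cleaner to note that the correction term converges to $0$ (so it is a null sequence, not merely one with finite liminf) and can therefore be dropped, after which the superadditivity of $\liminf$ together with $\liminf_k F''(\bar x)h_k^2 \ge 0$ gives the bound $\ge \tfrac{1}{2\Lambda}$.
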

\begin{proof}
    We have
    $(H \oplus H_2)^* = G + H_2^*$,
    see \cite[Exercise~4.5.7]{BorweinZhu2005}.
    The conjugate of $H_2$ is
    \begin{equation*}
        H_2^*(x)
        =
        \sup\set*{
            \dual*{\sum_{i = 1}^m \beta_i \zeta_i}{x}
            -
            \parens*{\frac12 \sum_{i = 1}^m \beta_i^2 + \dual*{\sum_{i = 1}^m \beta_i \zeta_i}{ \bar x}}
            \given \beta_i \in \R
        }
        =
        \frac12
        \sum_{i = 1}^m \abs{\dual{\zeta_i}{x - \bar x}}^2
        .
    \end{equation*}
    Thus, we have
    $0 \in \partial H_2^*(\bar x)$
    and
    $\bar p \in \partial (G + H_2^*)(\bar x) = \partial \tilde H^*(\bar x)$.
    Now, the Fenchel--Young identity gives
    \begin{equation*}
        \tilde H(\bar p)
        =
        \dual{\bar p}{\bar x}
        -
        (G + H_2^*)(\bar x)
        =
        \dual{\bar p}{\bar x}
        -
        G(\bar x)
        =
        H(\bar p).
    \end{equation*}
    Hence, we can replace $H(\bar p)$ in \eqref{eq:descent_lemma}
    by $\tilde H(\bar p)$.
    Next, we follow the proof of \cite[Lemma~4.9]{wachsmuth2} which yields
    \begin{equation*}
        \tilde H^*(x)
        \ge
        \tilde H^*(\bar x)
        +
        \dual{\bar p}{x - \bar x}
        +
        \frac1{2\Lambda} \norm{x - \bar x}_X^2
        \qquad\forall x \in X, \norm{x - \bar x}_X \le \eta\Lambda.
    \end{equation*}
    The identity $\tilde H^* = G + H_2^*$
    implies \eqref{eq:ascent_lemma}.
    Finally, let sequences $\seq{t_k} \subset (0,\infty)$, $\seq{h_k} \subset X$ with $t_k \searrow 0$,
    $h_k \weakstar 0$ in $X$ and $\norm{h_k}_X = 1$ be given.
    Then,
    we utilize \eqref{eq:ascent_lemma} to get
    \begin{align*}
        \liminf_{k \to \infty}
        \parens*{
            \frac{G(\bar x + t_k h_k) - G(\bar x) - \dual{\bar p}{t_k h_k}}{t_k^2}
            +
            \frac12 F''(\bar x) h_k^2
        }
        &\ge
        \liminf_{k \to \infty}
        \parens*{
            \frac1{2\Lambda}
            -
            \frac12 \sum_{i = 1}^m \abs{\dual{\zeta_i}{h_k}}^2
        }
        =
        \frac1{2\Lambda},
    \end{align*}
    where we used $h_k \weakstar 0$ and the assumed property of $F''(\bar x)$.
    This shows that \eqref{eq:NDC} is satisfied.
\end{proof}
\section{No-gap second-order conditions in spaces of measures} \label{sec:nogapmeasures}
In the remainder of the paper, we will use the auxiliary results of \cref{sec:nogapabstract} to investigate the abstract conditions~\eqref{eq:SSC} and~\eqref{eq:NDC} for a suitable interpretation of Problem~\eqref{def:sparseintro} and an
appropriate choice of the variable space. In this section, before formalizing our results, we therefore introduce an equivalent lifting of
Problem~\eqref{def:sparseintro} to~$X=\Cod$, the dual space of the continuous
differentiable functions on~$\Omega$.
 
\subsection{Uniform local Quasiconvexity} \label{subsec:quasiconvex}
In what follows,
we recall the notion of ``uniform local quasiconvexity''
appearing in \cref{ass:functions}
and discuss its consequences.
\begin{definition}
	\label{def:LUQ}
	We say that a set $D \subset \R^d$ is
	\emph{uniformly locally quasiconvex}
	with constants
	$r > 0$ and $C \ge 1$
	if for any
	$x,y \in D$ with $\abs{y - x} \le r$
	there exists a curve
	$\gamma \in C([0,1]; D)$
	with $\gamma(0) = x$, $\gamma(1) = y$
	and Lipschitz constant
	at most $C \abs{y - x}$.
\end{definition}

\begin{remark}
	\label{rem:rectifiable_vs_Lipschitz}
	Typically, it is required that the curve $\gamma \in C([0,1]; D)$
	from \cref{def:LUQ}
	is rectifiable and of length at most $C \abs{y - x}$
	(instead of being Lipschitz continuous),
	see \cite[p.~1223]{HajlaszKoskelaTuominen2008}.
	However, one can perform an arc-length reparametrization
	to ensure Lipschitz continuity of $\gamma$.
\end{remark}

It is clear that convex sets are uniformly locally quasiconvex.
Further,
the next result shows that \cref{def:LUQ}
is a rather weak requirement.
\begin{lemma}
	\label{lem:Lipschitz_implies_ulq}
	Let $D \subset \R^d$ be open and bounded.
	Further, we require that $D$
	is a Lipschitz set
	in the sense that
	for every point $p \in \partial D$,
	there exists $r > 0$ and
	a bijection $l_p \colon B_r(p) \to B_1(0)$
	such that $l_p$ and $l_p^{-1}$ are Lipschitz,
	\begin{equation*}
		l_p( D \cap B_r(p)) = \set{z \in B_1(0) \given z_n > 0} =: B_1^+(0)
		\quad\text{and}\quad
		l_p( \partial D \cap B_r(p)) = \set{z \in B_1(0) \given z_n = 0}
		.
	\end{equation*}
	Then,
	$D$ is uniformly locally quasiconvex.
\end{lemma}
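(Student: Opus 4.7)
The plan is to split into two cases depending on whether the straight segment $[x,y]$ stays inside $D$, handling the boundary case by transporting the problem through the Lipschitz chart into the convex upper half-ball. First, I would exploit compactness of $\partial D$ to extract finitely many points $p_1, \ldots, p_N \in \partial D$ and radii $r_i > 0$ such that the balls $B_{r_i/2}(p_i)$ already cover $\partial D$ and each $l_{p_i} \colon B_{r_i}(p_i) \to B_1(0)$ is the bi-Lipschitz chart supplied by the hypothesis. Setting $L \coloneqq \max_i \max\{\lip(l_{p_i}), \lip(l_{p_i}^{-1})\}$ and $r \coloneqq \tfrac14 \min_i r_i$, I would show that \cref{def:LUQ} is satisfied with these constants and $C \coloneqq L^2$.

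Given $x, y \in D$ with $\abs{y-x} \le r$, I distinguish two cases. If the segment $[x,y]$ lies entirely in $D$, then the affine path $\gamma(t) = (1-t)x + ty$ does the job with Lipschitz constant $\abs{y-x}$. Otherwise, the segment leaves the open set $D$; since $x, y \in D$, a standard continuity argument yields some crossing point $z \in [x,y] \cap \partial D$. The cover gives $z \in B_{r_i/2}(p_i)$ for some $i$, and the triangle inequality combined with $\abs{y-x} \le r \le r_i/4$ forces the entire segment, in particular $x$ and $y$, to lie in $B_{r_i}(p_i)$. Now $B_1^+(0)$ is the intersection of the closed unit ball with an open half-space and is therefore convex, so the straight segment between $l_{p_i}(x)$ and $l_{p_i}(y)$ remains in $B_1^+(0)$. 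Pulling this segment back by $l_{p_i}^{-1}$ produces a curve contained in $l_{p_i}^{-1}(B_1^+(0)) = D \cap B_{r_i}(p_i) \subset D$ that joins $x$ to $y$ with Lipschitz constant at most $\lip(l_{p_i}^{-1}) \cdot \abs{l_{p_i}(y) - l_{p_i}(x)} \le L^2 \abs{y-x}$, as required.

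The main obstacle I anticipate is purely the bookkeeping needed to choose $r$ small enough so that whenever $[x,y]$ crosses $\partial D$, the two endpoints together with a crossing point $z$ sit in a single chart ball $B_{r_i}(p_i)$. This is a Lebesgue-number style estimate based on the finite cover of $\partial D$ and compactness. Once this geometric step is secured, the convexity of $B_1^+(0)$ removes any need for a nontrivial curve construction inside the chart, and the Lipschitz bound follows immediately from the bi-Lipschitz property of $l_{p_i}$.
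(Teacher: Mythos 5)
Your proof is correct and relies on the same core ingredients as the paper's (finite subcover of $\partial D$, bi-Lipschitz charts, convexity of $B_1^+(0)$, constant $C=L^2$), but you split into cases differently, and this is a genuine simplification. The paper's dichotomy is based on the location of $x$: either $x$ lies in the thickened cover $\bigcup_i B_{r_i/2}(p_i)$ (use a chart) or it does not, in which case a separate compactness argument is needed to produce a lower bound $r_0$ for the distance from $D \setminus \bigcup_i U_{r_i/2}(p_i)$ to $\partial D$, so that the small ball $U_r(x)$ stays inside $D$. Your dichotomy is instead based on the segment $[x,y]$ itself: either it stays in $D$ (trivial), or it crosses $\partial D$, in which case the crossing point $z$ is caught by the cover and the estimate $\abs{x-p_i} \le \abs{x-z}+\abs{z-p_i} < r_i/4 + r_i/2 < r_i$ (and similarly for $y$) places both endpoints in a single chart ball. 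This makes the auxiliary quantity $r_0$ and the associated compactness argument unnecessary, at the modest cost of a connectedness argument to locate the crossing point $z \in [x,y] \cap \partial D$. Both proofs ultimately land on the same chart computation and the same Lipschitz bound $L^2\abs{y-x}$; your version is a little leaner in the bookkeeping.
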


The proof of \cref{lem:Lipschitz_implies_ulq}
can be found in \cref{app:quasi}.
We give two important consequences of \cref{def:LUQ}.
For the proofs, we again refer to \cref{app:quasi}.
The first lemma shows that functions from $\Co$
are Lipschitz continuous.
\begin{lemma}
	\label{lem:LUC_implies_C1_embeds_Lip}
	Let $\Intr(\Omega)$ be uniformly locally quasiconvex
	with constants $r > 0$ and $C \ge 1$.
	Then, every $\varphi \in \Co$
	is Lipschitz continuous and
	\begin{equation*}
		\lip(\varphi)
		\le
		\max\set*{
			\frac{2}{r} \cnorm{\varphi},
			C \cdnorm{\nabla\varphi}
		}
		\le
		\max\set*{2 r^{-1}, C}
		\conorm{\varphi}
		.
	\end{equation*}
\end{lemma}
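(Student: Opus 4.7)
The strategy is to split the pair $x_1, x_2 \in \Omega$ into two regimes according to whether $\abs{x_1 - x_2}$ exceeds the quasiconvexity radius $r$ or not, and to handle each regime separately.

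\emph{The easy regime.} If $\abs{x_1 - x_2} > r$, then the trivial estimate $\abs{\varphi(x_1) - \varphi(x_2)} \le 2 \cnorm{\varphi}$ together with $1 < \abs{x_1 - x_2}/r$ immediately yields
\begin{equation*}
	\abs{\varphi(x_1) - \varphi(x_2)} \le \frac{2}{r} \cnorm{\varphi} \abs{x_1 - x_2}.
\end{equation*}
So in this regime the Lipschitz ratio is bounded by $(2/r)\cnorm{\varphi}$.

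\emph{The hard regime: $\abs{x_1 - x_2} \le r$.} Here uniform local quasiconvexity would give us a connecting curve in $\Intr(\Omega)$, but $x_1, x_2$ are only assumed to lie in $\Omega$, which might contain boundary points. I will therefore approximate: since $\Omega = \overline{\Intr(\Omega)}$, pick sequences $\tilde x_1^n, \tilde x_2^n \in \Intr(\Omega)$ with $\tilde x_i^n \to x_i$. For $n$ sufficiently large we have $\abs{\tilde x_1^n - \tilde x_2^n} \le r$, so \cref{def:LUQ} yields a curve $\gamma_n \in C([0,1]; \Intr(\Omega))$ with $\gamma_n(0) = \tilde x_1^n$, $\gamma_n(1) = \tilde x_2^n$, and Lipschitz constant at most $C\abs{\tilde x_1^n - \tilde x_2^n}$. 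The curve $\gamma_n$ is then absolutely continuous with $\abs{\gamma_n'(t)} \le C \abs{\tilde x_1^n - \tilde x_2^n}$ for a.e.\ $t \in [0,1]$. Since $\varphi$ is $C^1$ on $\Intr(\Omega)$ and $\gamma_n$ stays in $\Intr(\Omega)$, the composition $\varphi \circ \gamma_n$ is absolutely continuous with derivative $\nabla\varphi(\gamma_n(t)) \cdot \gamma_n'(t)$ a.e. The fundamental theorem of calculus gives
\begin{equation*}
	\abs{\varphi(\tilde x_2^n) - \varphi(\tilde x_1^n)}
	= \abs*{\int_0^1 \nabla\varphi(\gamma_n(t)) \cdot \gamma_n'(t) \de t}
	\le C \cdnorm{\nabla\varphi} \abs{\tilde x_1^n - \tilde x_2^n}.
\end{equation*}
Passing to the limit $n \to \infty$ and using continuity of $\varphi$ on $\Omega$ yields $\abs{\varphi(x_1) - \varphi(x_2)} \le C \cdnorm{\nabla\varphi} \abs{x_1 - x_2}$.

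Combining the two regimes proves the first claimed bound on $\lip(\varphi)$, and the second bound follows trivially from $\cnorm{\varphi} \le \conorm{\varphi}$ and $\cdnorm{\nabla\varphi} \le \conorm{\varphi}$. The only nontrivial technical point is verifying the chain rule identity for the absolutely continuous composition $\varphi \circ \gamma_n$, but since $\gamma_n$ remains in the open set $\Intr(\Omega)$ where $\varphi$ is continuously differentiable, this is a standard application of the absolute continuity of $\gamma_n$ together with the uniform continuity of $\nabla\varphi$ on the compact image $\gamma_n([0,1])$.
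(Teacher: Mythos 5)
Your proof takes essentially the same two-regime strategy as the paper (trivial sup bound for far-apart points; quasiconvex curve plus the fundamental theorem of calculus for nearby points; limiting argument from $\Intr(\Omega)$ to $\Omega$), with the same constants. One small wrinkle: in the hard regime you claim that $\abs{\tilde x_1^n - \tilde x_2^n} \le r$ eventually, which can fail when $\abs{x_1 - x_2} = r$ exactly; the paper sidesteps this by first proving the combined bound for \emph{all} pairs of interior points (taking the maximum of the two estimates, so the case split never needs to be stable under approximation) and only then passing to the closure. You could fix your version simply by putting $\abs{x_1 - x_2} \ge r$ into the easy regime, since $2\cnorm{\varphi} \le \frac{2}{r}\cnorm{\varphi}\abs{x_1-x_2}$ still holds there.
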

The next lemma shows that we have a uniform Taylor expansion.
\begin{lemma}
	\label{lem:quasiconvex_taylor}
	Assume that $\Intr(\Omega)$ is uniformly locally quasiconvex and
	let $\varphi \in C^1(\Omega)$ be given.
	For any $\varepsilon > 0$,
	there exists $\delta > 0$ such that
	\begin{equation*}
		\abs*{
			\varphi(y) - \varphi(x) - \nabla \varphi(x)^\top (y - x)
		}
		\le
		\varepsilon \abs{y - x}
		\qquad
		\forall x, y \in \Omega, \abs{y - x} \le \delta.
	\end{equation*}
\end{lemma}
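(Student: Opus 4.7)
The plan is to combine uniform continuity of $\nabla\varphi$ on the compact set $\Omega$ with integration of $\nabla\varphi$ along the short curves whose existence is guaranteed by uniform local quasiconvexity, and then extend the estimate to boundary points by density.

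First, since $\Omega$ is compact and $\nabla\varphi \in C(\Omega;\R^d)$ extends continuously from $\Intr(\Omega)$, the map $\nabla\varphi$ is uniformly continuous on $\Omega$. Given $\varepsilon > 0$, I would therefore pick $\delta_1 > 0$ such that $|\nabla\varphi(z_1) - \nabla\varphi(z_2)| \le \varepsilon/C$ whenever $z_1, z_2 \in \Omega$ satisfy $|z_1 - z_2| \le \delta_1$, where $C \ge 1$ and $r > 0$ are the uniform local quasiconvexity constants of $\Intr(\Omega)$ from \cref{def:LUQ}. Then I would set $\delta := \min\{r, \delta_1 / C\}$.

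Next, for $x, y \in \Intr(\Omega)$ with $|y - x| \le \delta$, \cref{def:LUQ} provides a Lipschitz curve $\gamma \in C([0,1]; \Intr(\Omega))$ with $\gamma(0) = x$, $\gamma(1) = y$ and Lipschitz constant at most $C|y-x|$. In particular, $|\gamma(t) - x| \le C|y - x| \le C\delta \le \delta_1$ for every $t \in [0,1]$. By the fundamental theorem of calculus applied along $\gamma$ (valid because $\varphi \in C^1$ on an open set containing the image of $\gamma$) together with $y - x = \int_0^1 \gamma'(t)\de t$, one gets
\begin{equation*}
\varphi(y) - \varphi(x) - \nabla\varphi(x)^\top(y - x)
= \int_0^1 \bigl(\nabla\varphi(\gamma(t)) - \nabla\varphi(x)\bigr)^\top \gamma'(t)\de t.
\end{equation*}
Taking absolute values and using $|\gamma'(t)| \le C|y - x|$ a.e.\ together with the uniform continuity estimate yields
\begin{equation*}
\bigl|\varphi(y) - \varphi(x) - \nabla\varphi(x)^\top(y - x)\bigr|
\le \frac{\varepsilon}{C}\cdot C|y - x| = \varepsilon|y - x|.
\end{equation*}

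Finally, to handle general $x, y \in \Omega$ with $|y - x| \le \delta/2$, I would use the assumption $\Omega = \operatorname{cl}(\Intr(\Omega))$ to pick sequences $x_n, y_n \in \Intr(\Omega)$ with $x_n \to x$, $y_n \to y$; for $n$ sufficiently large $|y_n - x_n| \le \delta$, so the inequality holds for each pair, and passing to the limit using continuity of $\varphi$ and $\nabla\varphi$ on $\Omega$ gives the claim (with $\delta$ replaced by $\delta/2$, say). The only mildly delicate point is the last one: we must ensure the curve $\gamma$ lies in $\Intr(\Omega)$ so that $\nabla\varphi$ makes sense along it, which is precisely why the density argument is needed to cover boundary points; this is the step that requires the most care, but it is routine given the assumption that $\Omega$ equals the closure of its interior.
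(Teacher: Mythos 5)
Your proof is correct and follows essentially the same route as the paper's: uniform continuity of $\nabla\varphi$ on the compact set $\Omega$, integration along the short Lipschitz curves supplied by uniform local quasiconvexity (the identity \eqref{eq:HDI}), and a density/limiting argument to pass from interior points to all of $\Omega$. The only cosmetic difference is bookkeeping of the factor of $2$ in the choice of $\delta$ to make the limiting step airtight, which you handle explicitly and the paper handles by halving $\delta$ at the outset.
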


\begin{remark}
	\label{rem:generalization_possible}
	In case that $\Omega \subset \R^d$ is a domain,
	\cite[Theorem~7]{HajlaszKoskelaTuominen2008}
	suggests that
	the assertions of \cref{lem:LUC_implies_C1_embeds_Lip,lem:quasiconvex_taylor}
	hold
	if and only if
	$\Omega$ is uniformly locally quasiconvex. For compact sets $\Omega \subset \R^d$,
	we have seen that
	it is sufficient that $\Intr(\Omega)$
	is uniformly locally quasiconvex. It is a natural question whether it is also necessary in this context. This is indeed not the case:
	Consider $\Omega = [-1,0]^2 \cup [0,1]^2$.
	Clearly,
	$\Intr(\Omega) = (-1,0)^2 \cup (0,1)^2$
	is not uniformly locally quasiconvex.
	However,
	we can apply
	\cref{lem:LUC_implies_C1_embeds_Lip,lem:quasiconvex_taylor}
	on $[-1,0]^2$ and $[0,1]^2$
	separately
	and, consequently,
	the desired properties hold on $\Omega$. On the contrary, we provide another example in \cref{app:quasi}, \cref{ex:cantor}, showing that both conclusions, \cref{lem:LUC_implies_C1_embeds_Lip,lem:quasiconvex_taylor}, can fail in the absence of local uniform quasiconvexity. 
\end{remark}

\subsection{A primer on the dual space of~$\Co$ and the relation to the bounded Lipschitz norm} \label{subsec:dualofC1}
We start by giving a precise characterization of the space~$\Cod$ and of its canonical dual norm. For this purpose, note that
\begin{align*}
    \Co \cong C \coloneqq
    \set*{
        (\varphi,\nabla \varphi)
        \given
        \varphi \in \Co 
    }
    \subset \C \times \Cd,
\end{align*}
where the product space on the right-hand side is equipped with the norm
\begin{align*}
\norm{(\varphi,\phi)}_{C \times C^d}= \max \left\{\cnorm{\varphi},\cdnorm{\phi}\right\} \qquad \forall (\varphi,\phi) \in \C \times \Cd.
\end{align*}
Note that the mapping $\Co \ni \varphi \mapsto (\varphi,\nabla\varphi) \in \C \times \Cd$
is an isometry.
One readily verifies that the dual space
of $\C \times \Cd$
is given by~$\M \times \Md $ with associated duality pairing 
\begin{equation*}
\langle \varphi, u  \rangle + \langle \phi, v \rangle
\qquad \forall (\varphi,\phi) \in \C \times \Cd,~(u,v) \in \M \times \Md
\end{equation*}
and norm
\begin{equation*}
	\norm{(u,v)}_{\M \times \Md}
	=
	\mnorm{u} + \mdnorm{v}
	=
	\mnorm{u} + \mnorm{\abs{v}},
\end{equation*}
where $\abs{v} \in \M$ is the variation
of the vector-valued measure $v$
(w.r.t.\ the Euclidean norm on its image space $\R^d$).

The subset $C$ is complete since it is isometrically isomorphic
to the complete space $\Co$.
Thus,
$C$ is a closed subspace of~$\C \times \Cd $.
Consequently, its dual space can be characterized using the annihilator
\begin{align*}
    C^\bot
    \coloneqq
    \set*{
        (\mu,\nu) \in \M \times \Md
        \given
        \dual{\varphi}{\mu} + \dual{\nabla \varphi}{\nu} =0 \quad \forall(\varphi,\nabla \varphi) \in C
    }.
\end{align*}    
\begin{lemma}[{\cite[Proposition~11.10]{Brezis2011}}]
	\label{lem:identofdual}
There holds
\begin{equation}
\label{eq:identofdual}
(\M \times \Md)  / C^\bot \cong  \Cod.
\end{equation} 
The isometric isomorphism is given by
\begin{equation*}
	(\M \times \Md)  / C^\bot \ni
	(u,v) + C^\bot
	\mapsto
	\parens*{
		\Co \ni
		\varphi \mapsto
		\dual{\varphi}{u} + \dual{\nabla \varphi}{v}
	}.
\end{equation*}
\end{lemma}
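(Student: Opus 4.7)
The plan is to recognize this as a direct application of the classical duality between a closed subspace and the quotient of its ambient dual by the annihilator, together with the identification of $\Co$ as a closed subspace of $\C \times \Cd$.

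First, I would verify that the isometric embedding $\iota \colon \Co \to \C \times \Cd$ sending $\varphi \mapsto (\varphi,\nabla\varphi)$ has closed image $C$. Since $\Co$ is complete and $\iota$ is an isometry onto $C$, the image $C$ is complete and hence closed in $\C \times \Cd$. Next, I would recall (or quickly reprove) the standard functional-analytic fact: if $E$ is a Banach space and $F \subset E$ is a closed subspace, then the restriction map $E^* \to F^*$ is surjective by Hahn--Banach, its kernel is the annihilator $F^\perp$, and the induced quotient map $E^*/F^\perp \to F^*$ is an isometric isomorphism (by the Hahn--Banach characterization of the quotient norm as an infimum matched by a norm-preserving extension).

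I would then apply this with $E = \C \times \Cd$, whose dual we have already identified as $\M \times \Md$ with the norm $\mnorm{u}+\mdnorm{v}$, and with $F = C$. The restriction of the functional associated to $(u,v) \in \M \times \Md$ to $C$ is precisely
\begin{equation*}
	(\varphi,\nabla\varphi) \mapsto \dual{\varphi}{u} + \dual{\nabla\varphi}{v},
\end{equation*}
which under the isometry $\iota$ corresponds to the element of $\Cod$ given by $\varphi \mapsto \dual{\varphi}{u}+\dual{\nabla\varphi}{v}$. The kernel of this restriction map is, by definition, exactly $C^\perp$. Composing the quotient isomorphism $(\M \times \Md)/C^\perp \to C^*$ with the canonical isomorphism $C^* \cong \Cod$ induced by $\iota$ yields the stated isometric isomorphism and the explicit formula.

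The main (mild) obstacle is simply the bookkeeping to ensure that the norms match: the quotient norm on $(\M \times \Md)/C^\perp$ is the infimum $\inf\{\mnorm{u+u'}+\mdnorm{v+v'} : (u',v') \in C^\perp\}$, while the $\Cod$ norm of the associated functional is the supremum over $\conorm{\varphi}\le 1$ of $\dual{\varphi}{u}+\dual{\nabla\varphi}{v}$. Their equality is precisely the content of the Hahn--Banach based duality between subspaces and quotients, which is why we can cite \cite[Proposition~11.10]{Brezis2011} directly; no further work is needed once the closedness of $C$ and the identification of $(\C\times\Cd)^*$ are in place.
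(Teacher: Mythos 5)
Your proposal is correct and takes the same route as the paper: the paper establishes the isometric embedding $\Co \cong C$ as a closed subspace of $\C \times \Cd$ (with dual $\M \times \Md$) and then invokes \cite[Proposition~11.10]{Brezis2011} for the quotient-by-annihilator duality; you simply unpack that cited result via Hahn--Banach, which is exactly the intended argument.
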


Consequently, we identify the elements of~$\Cod$ with equivalence classes
\begin{equation*}
	\eqclass{u}{v} \coloneqq
	(u,v) + C^\bot
	=
	\set*{ (u,v)+(\mu,\nu) \given (\mu,\nu) \in C^\bot }
	\in
	(\M \times \Md)  / C^\bot
\end{equation*}
of measures~$(u,v) \in \M \times \Md$  
which act on functions~$\varphi \in \Co$ via the duality pairing
\begin{align*}
\ddual{\varphi}{\eqclass{u}{v}} \coloneqq \langle \varphi, u \rangle+ \langle \nabla \varphi, v \rangle .
\end{align*}
In what follows, we will always employ the identification \eqref{eq:identofdual},
i.e., bounded functionals on $\Co$ are always identified with the corresponding equivalence classes.
Consequently, the norm on $\Cod$ is given by
the quotient norm
\begin{equation*}
	\codnorm{\eqclass{u}{v}}
	= \sup_{\conorm{\varphi}\leq 1} \ddual{\varphi}{\eqclass{u}{v}} =
	\inf\set*{
		\mnorm{u + \mu} + \mdnorm{v + \nu}
		\given
		(\mu, \nu) \in C^\bot
	}
	.
\end{equation*} 
In the subsequent sections,
we frequently encounter elements of the form $\eqclass{u}{0}$ with $u \in \M = \C^*$.
Note that $\eqclass{u}{0}$ is the restriction of the functional $u \colon \C \to \R$
to the domain $\Co$.
In what follows, we give
a simpler representation of the dual norm for such elements $\eqclass{u}{0}$.
For this purpose, we denote by~$\Lip$ the space of Lipschitz continuous functions on~$\Omega$ which we equip with the canonical norm
\begin{equation*}
	\lnorm{\varphi} \coloneqq \max\set{\cnorm{\varphi},\lip(\varphi)},
\end{equation*}
where we used the Lipschitz constant $\lip(\varphi)$ of $\varphi$ defined in \eqref{eq:lipschitz_constant}.
Naturally, every~$u \in \M$ defines a linear function on~$\Lip$. The associated dual norm  
\begin{equation}
    \label{eq:BL}
    \blnorm{u}= \sup_{\lnorm{\varphi} \leq 1} \langle \varphi, u \rangle
\end{equation}
will be referred to as the \textit{bounded Lipschitz} norm.
Due to our standing assumption that $\Intr(\Omega)$
is uniformly locally quasiconvex,
the space $\Co$ is embedded into the Lipschitz functions, see \cref{lem:LUC_implies_C1_embeds_Lip},
and
we obtain the equivalence of the bounded Lipschitz norm with
the natural norm of $\Co$.
\begin{lemma}
	\label{lem:equivalence_of_norms_co}
	The norms
	$\codnorm{\eqclass{\cdot}{0}}$ and $\blnorm{\cdot}$
	are equivalent on $\M$.
\end{lemma}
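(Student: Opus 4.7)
The plan is to establish the two inequalities
\begin{equation*}
\blnorm{u} \le \codnorm{\eqclass{u}{0}} \le \max\set{2r^{-1},C} \blnorm{u} \qquad \forall u \in \M,
\end{equation*}
where $r,C$ are the constants from \cref{def:LUQ} applied to $\Intr(\Omega)$. Since the pairing $\ddual{\varphi}{\eqclass{u}{0}} = \dual{\varphi}{u}$ does not involve the gradient slot, the quotient-norm formula reduces to $\codnorm{\eqclass{u}{0}} = \sup\set{\dual{\varphi}{u} \given \varphi \in \Co,\, \conorm{\varphi} \le 1}$, which is convenient for both estimates.

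For the upper bound on $\codnorm{\eqclass{u}{0}}$, I would simply feed \cref{lem:LUC_implies_C1_embeds_Lip} into this supremum: any $\varphi \in \Co$ with $\conorm{\varphi} \le 1$ is Lipschitz with $\lnorm{\varphi} \le \max\set{2r^{-1},C}$, hence $\dual{\varphi}{u} \le \max\set{2r^{-1},C}\blnorm{u}$ by definition of the bounded Lipschitz norm. Taking the supremum over $\varphi$ gives the claimed estimate. This direction is essentially immediate once \cref{lem:LUC_implies_C1_embeds_Lip} is in hand.

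The reverse inequality $\blnorm{u} \le \codnorm{\eqclass{u}{0}}$ is the main obstacle and will be handled by an approximation argument. Given $\varphi$ with $\lnorm{\varphi} \le 1$, I would first extend $\varphi$ to a function on $\R^d$ preserving both $\cnorm{\varphi}$ and $\lip(\varphi)$: the McShane extension preserves the Lipschitz constant, and afterwards truncating at $\pm \cnorm{\varphi}$ (a 1-Lipschitz operation) restores the sup-norm bound. Next, mollify with a standard sequence $\rho_\varepsilon$ to obtain $\varphi_\varepsilon \in C^\infty(\R^d)$ satisfying $\|\varphi_\varepsilon\|_\infty \le \cnorm{\varphi}$ and $\|\nabla\varphi_\varepsilon\|_\infty \le \lip(\varphi)$, where the gradient bound uses the Rademacher differentiation of $\varphi$ to move $\nabla$ off the mollifier. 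Restricting $\varphi_\varepsilon$ to $\Omega$ yields $\conorm{\varphi_\varepsilon} \le \lnorm{\varphi} \le 1$, while uniform continuity of the (extended) $\varphi$ on a neighborhood of the compact set $\Omega$ gives $\varphi_\varepsilon \to \varphi$ uniformly on $\Omega$.

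Finally, combining these two observations: for every $u \in \M$ I obtain
\begin{equation*}
\dual{\varphi}{u} = \lim_{\varepsilon \to 0} \dual{\varphi_\varepsilon}{u} \le \codnorm{\eqclass{u}{0}},
\end{equation*}
and taking the supremum over admissible $\varphi$ delivers $\blnorm{u} \le \codnorm{\eqclass{u}{0}}$. Together with the first step, this proves equivalence of the two norms. The only non-routine point is the extension/mollification construction; the rest is bookkeeping from the definitions and \cref{lem:LUC_implies_C1_embeds_Lip}.
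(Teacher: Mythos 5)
Your proposal follows essentially the same two-step route as the paper: one inequality falls directly out of \cref{lem:LUC_implies_C1_embeds_Lip}, and the other is proved by extending a Lipschitz test function to $\R^d$ and mollifying to obtain an admissible $\Co$ approximant. The only slight refinement is that you explicitly truncate the McShane extension to preserve the sup-norm bound before mollifying, a point the paper glosses over; otherwise the argument is the same.
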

\begin{proof}
	From
	\cref{lem:LUC_implies_C1_embeds_Lip}
	and the definition of $\lnorm{\cdot}$
	we have
	\begin{equation*}
		\lnorm{\varphi}
		\le
		L \conorm{\varphi}
		\qquad
		\forall \varphi \in \Co
	\end{equation*}
	for some constant $L > 0$.
	This yields
	\begin{equation*}
		\codnorm{\eqclass{u}{0}}
		=
		\sup_{\conorm{\varphi}\leq 1} \ddual{\varphi}{\eqclass{u}{0}}
		=
		\sup_{\conorm{\varphi}\leq 1} \dual{\varphi}{u}
		\le
		\sup_{\lnorm{\varphi}\leq L} \dual{\varphi}{u}
		=
		L \blnorm{u}.
	\end{equation*}
	For the reverse estimate,
	let $\varphi \in \Lip$ with $\lnorm{\varphi} \le 1$ be given.
	Using classical arguments,
	$\varphi$ can be extended to all of $\R^d$
	while preserving its Lipschitz constant.
	Now, let $\varepsilon > 0$ be given and denote by $\Phi_\varepsilon$
	a mollifier
	supported on $B_\varepsilon(0) \subset \R^d$.
	Then,
	$\psi := \Phi_\varepsilon \mathbin{\star} \varphi$
	satisfies
	$\psi \in \Co$,
	$\conorm{\psi} \le 1$
	and
	$\cnorm{\psi - \varphi} \le \varepsilon$.
	For an arbitrary $u \in \M$,
	we get
	\begin{equation*}
		\dual{\varphi}{u}
		\le
		\varepsilon \mnorm{u}
		+
		\dual{\psi}{u}
		=
		\varepsilon \mnorm{u}
		+
		\ddual{\varphi}{\eqclass{u}{0}}
		\le
		\varepsilon \mnorm{u}
		+
		\codnorm{u}.
	\end{equation*}
	Taking the limit $\varepsilon \searrow 0$
	and the supremum w.r.t.\ $\varphi$,
	this shows
	$\blnorm{u} \le \codnorm{u}$.
\end{proof}

Moreover, we require the notion of the Wasserstein-1 distance between positive measures of the same mass.
\begin{definition} \label{def:Wasserstein}
Given two measures~$\mu,\nu \in \M$ with~$\mu,\nu \geq 0$ and~$\mu(\Omega)=\nu(\Omega)$, we define the Wasserstein-1 distance between~$\mu$ and~$\nu$ as
\begin{equation*}
    W_1(\mu,\nu)= \min \set*{ \int_{\Omega \times \Omega} \abs{x-y}  \de \gamma(x,y) \given \gamma \in \Pi(\mu,\nu) },
\end{equation*}
where the minimum is taken over all couplings~$\gamma \in \Pi(\mu,\nu) \subset \MM(\Omega \times \Omega)$ between~$\mu$ and~$\nu$, i.e., there holds~$\gamma \geq 0$ as well as
\begin{equation*}
    \gamma(B \times \Omega )= \mu(B) \quad \text{and} \quad \gamma( \Omega \times B )= \nu(B) 
\end{equation*}
for all Borel sets~$B\in\BB(\Omega)$.
\end{definition}
The well known Kantorovich duality
yields
\begin{equation*}
	W_1(\mu,\nu) = \sup\set*{\int_\Omega \psi \de(\mu - \nu) \given \lip(\psi) \le 1}.
\end{equation*}

Using this, we can derive an optimal transport based representation of the bounded Lipschitz norm.
\begin{lemma} \label{lem:blnorm}
	For every~$u \in \M$ we denote by~$u=u_+-u_{-}$ the Jordan decomposition of~$u$.
	Then, we have
\begin{equation}
    \label{eq:nice_identity}
    \blnorm{u}= \min_{\substack{\mu, \nu \geq 0\\ \mu(\Omega)=\nu(\Omega)}} \left \lbrack W_1(\mu,\nu)+ \mnorm{u_+-\mu}+\mnorm{u_{-}-\nu} \right \rbrack.
\end{equation}
\end{lemma}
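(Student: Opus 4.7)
The plan is to prove the two matching inequalities. For the easier direction ``$\leq$'' in \eqref{eq:nice_identity}, I would fix any admissible pair $(\mu,\nu)$ with $\mu,\nu\ge 0$ and $\mu(\Omega)=\nu(\Omega)$, and any $\varphi\in\Lip$ with $\lnorm{\varphi}\le 1$, and exploit the telescoping decomposition
\begin{equation*}
  \int_\Omega \varphi \de u
  =\int_\Omega \varphi \de(u_+-\mu)
  +\int_\Omega \varphi \de(\mu-\nu)
  +\int_\Omega \varphi \de(\nu-u_-).
\end{equation*}
The two outer integrals are bounded in absolute value by $\mnorm{u_+-\mu}$ and $\mnorm{u_--\nu}$ respectively using $\cnorm{\varphi}\le 1$, while the middle integral is bounded by $W_1(\mu,\nu)$ via the Kantorovich duality formula stated right after \cref{def:Wasserstein}, which applies precisely because $\mu,\nu\ge 0$ have equal total mass and $\lip(\varphi)\le 1$. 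Taking the supremum in $\varphi$ yields $\blnorm{u}\le W_1(\mu,\nu)+\mnorm{u_+-\mu}+\mnorm{u_--\nu}$.

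For the harder direction ``$\ge$'' I would appeal to LP duality. After unfolding $W_1$ as an infimum over couplings $\gamma\in\MM(\Omega\times\Omega)$ and decomposing each residual into its positive and negative parts, $u_+-\pi_1\gamma=s_+-s_-$ and $u_--\pi_2\gamma=t_+-t_-$ with non-negative $s_\pm,t_\pm$, the right-hand side of \eqref{eq:nice_identity} becomes a linear programme in the non-negative measure variables $\gamma,s_\pm,t_\pm$ with the linear equality constraints $\pi_1\gamma+s_+-s_-=u_+$ and $\pi_2\gamma+t_+-t_-=u_-$. Introducing Lagrange multipliers $\varphi_1,\varphi_2\in\C$ for these two constraints and minimizing the Lagrangian over the cone of non-negative measures produces the dual problem
\begin{equation*}
  \sup\set*{\int_\Omega \varphi_1 \de u_+ + \int_\Omega \varphi_2 \de u_- \given \cnorm{\varphi_i}\le 1,\ \varphi_1(x)+\varphi_2(y)\le|x-y|\ \forall x,y\in\Omega}.
\end{equation*}
I can then show this supremum equals $\blnorm{u}$ via $c$-convexification with respect to the metric cost: replacing $\varphi_1$ by its double $c$-transform $\varphi_1^{cc}$ makes it $1$-Lipschitz (and still bounded by $1$) without decreasing $\int\varphi_1\de u_+$ (since $u_+\ge 0$ and $\varphi_1^{cc}\ge\varphi_1$); for such a $1$-Lipschitz function, the optimal $\varphi_2$ is simply $-\varphi_1^{cc}$, so the objective collapses to $\int\varphi_1^{cc}\de u\le\blnorm{u}$. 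The matching lower bound is immediate, since any $\varphi$ with $\lnorm{\varphi}\le 1$ renders $(\varphi,-\varphi)$ feasible in the dual.

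The principal obstacle is establishing strong LP duality to close the gap between the primal infimum and this dual supremum in the infinite-dimensional setting. I plan to invoke Sion's minimax theorem on the bilinear Lagrangian obtained by writing $F(\mu,\nu):=W_1(\mu,\nu)+\mnorm{u_+-\mu}+\mnorm{u_--\nu}$ itself as a supremum via Kantorovich duality and the dual characterization of $\mnorm{\cdot}$, after restricting $(\mu,\nu)$ to a weak*-compact ball $\{\mnorm{\mu}+\mnorm{\nu}\le R\}$---this restriction is harmless because $F(0,0)=\mnorm{u}$ provides an a priori upper bound for the infimum. Attainment of the minimum in \eqref{eq:nice_identity} follows from a standard compactness argument: a minimizing sequence has uniformly bounded masses, Banach--Alaoglu supplies a weak*-convergent subsequence, the admissibility constraints pass to the limit (since $\mu\mapsto\mu(\Omega)$ is weak*-continuous and the non-negativity cone is weak*-closed), and both $W_1$ and $\mnorm{\cdot}$ are weak*-lower semicontinuous.
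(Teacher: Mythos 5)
Your plan is the same duality-based idea the paper uses, but you run it in the opposite direction: the paper's Fenchel primal lives on the function side (triples $(\psi_1,\psi_2,\psi_3)\in\C^3$ with the inequality constraints $-\psi_3\le\psi_1\le\psi_2$), whereas you take the measure-side optimisation as the primal LP and try to reach the function side as the Lagrangian dual. Your easy inequality (telescoping $\int\varphi\de u$ through $(u_+-\mu)+(\mu-\nu)+(\nu-u_-)$ and invoking Kantorovich for the middle term) is correct, and the $c$-transform argument that identifies your candidate dual value with $\blnorm{u}$ is also correct and quite elegant.

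The genuine gap is in the strong-duality step. After restricting $(\mu,\nu)$ to a weak*-compact ball $B_R$ and applying Sion, what you obtain is $\inf_{B_R}F=\sup_{\psi}\inf_{(\mu,\nu)\in B_R}L'(\mu,\nu,\psi)$, and the inner infimum is \emph{not} the clean Lagrangian dual you wrote down: restricted to a ball, the linear part of $L'$ in $(\mu,\nu)$ yields a penalty $\min\bigl(0,\tfrac{R}{2}\min_{x,y}[(\psi_1-\psi_2)(x)+(-\psi_1-\psi_3)(y)]\bigr)$ rather than the hard constraint that appears for $R=\infty$. Consequently the right-hand side of the Sion identity is $R$-dependent, and you must show that it converges (from above) to $\blnorm{u}$ as $R\to\infty$. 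Since the easy direction already gives $\inf_{B_R}F\ge\blnorm{u}$ for every $R$, the whole content of the hard inequality is precisely that the supremum over nearly-feasible $\psi$ is not strictly above $\blnorm{u}$; this requires a perturbation argument (shift a near-maximizing $\psi_2$ by $O(1/R)$ to restore feasibility and track the error), which your sketch does not supply. The paper sidesteps this entirely: it sets up Fenchel duality with the function space as primal, and the constraint qualification reduces to checking that the constant pair $(-1,-1)$ lies in the interior of the feasible cone $M=\set{\varphi\le 0}$ in $\C^2$ — an immediate Slater condition yielding exact strong duality and attainment of the dual minimum in one application of the Fenchel theorem. Your attainment argument (weak* lsc of $W_1$ plus coercivity plus Banach–Alaoglu) is sound in principle but also gets attainment for free in the paper's formulation.
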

\begin{proof}
	The formula \eqref{eq:nice_identity} will be proven by Fenchel duality.
	We define the sets
	\begin{align*}
		X &\coloneqq \C^3, &
		D &\coloneqq \set{(\psi_1,\psi_2,\psi_3) \in X \given \lip(\psi_1)\le1,\; \cnorm{\psi_2} \le 1,\; \cnorm{\psi_3} \le 1},
		\\
		Y &\coloneqq \C^2, &
		M &\coloneqq \set{(\varphi_1, \varphi_2) \in Y \given \varphi_1 \le 0 ,\; \varphi_2 \le 0},
	\end{align*}
	as well as the mappings
	$A \colon X \to Y$, $f \colon X \to (-\infty,\infty]$ and $g \colon Y \to (-\infty,\infty]$ via
	\begin{align*}
		A(\psi_1,\psi_2,\psi_3) &\coloneqq (\psi_1 - \psi_2, -\psi_1 - \psi_3), &
		g(\varphi_1, \varphi_2) &\coloneqq I_{M}(\varphi_1, \varphi_2), \\
		f(\psi_1,\psi_2,\psi_3) &\coloneqq I_{D}(\psi_1, \psi_2, \psi_3) + \dual{\psi_2}{u_+} + \dual{\psi_3}{u_-}.
	\end{align*}
	The functionals $f$ and $g$ are convex and lower semicontinuous
	and $A$ is linear and bounded.
	The set $A \dom(f)$ contains the constant function $(-1,-1)$
	and this is a continuity point of $g$.
	Hence, Fenchel duality gives
	\begin{equation*}
		j_1 \coloneqq
		\inf_{(\psi_1,\psi_2,\psi_3) \in X} f(\psi_1, \psi_2, \psi_3) + g(A(\psi_1, \psi_2, \psi_3))
		=
		-\min_{(\mu,\nu) \in Y^*} f^*(-A^*(\mu,\nu)) + g^*(\mu,\nu)
		\eqqcolon j_2
		,
	\end{equation*}
	see \cite[Theorem~4.4.3]{BorweinZhu2005}.
	It remains to compute $j_1$ and $j_2$.
	For $j_1$ we get
	\begin{align*}
		j_1 &=
		\inf_{(\psi_1,\psi_2,\psi_3) \in X} f(\psi_1, \psi_2, \psi_3) + g(A(\psi_1, \psi_2, \psi_3))
		\\&=
		\inf\set*{
			\dual{\psi_2}{u_+} + \dual{\psi_3}{u_-}
			\given (\psi_1, \psi_2, \psi_3) \in D, \; -\psi_3 \le \psi_1 \le \psi_2
		}
		\\
		&=
		\inf\set*{
			\dual{\psi_1}{u_+} + \dual{-\psi_1}{u_-}
			\given (\psi_1, \psi_1, -\psi_1) \in D
		},
	\end{align*}
	where we used that $u_+, u_- \ge 0$.
	Now, $(\psi_1, \psi_1, -\psi_1) \in D$
	is equivalent to $\lnorm{\psi_1} \le 1$
	and, thus,
	\begin{equation*}
		j_1 = -\blnorm{u}.
	\end{equation*}

	To evaluate $j_2$, we first compute
	\begin{equation*}
		g^*(\mu, \nu)
		=
		\sup\set*{ \dual{\varphi_1}{\mu} + \dual{\varphi_2}{\nu} \given (\varphi_1, \varphi_2) \in M}
		=
		I_P(\mu,\nu),
	\end{equation*}
	where $P \subset \M^2$ is the subset of non-negative measures.
	Moreover,
	\begin{align*}
		f^*(-A^*(\mu,\nu))
		&=
		f^*(\nu - \mu, \mu, \nu)
		\\&
		=
		\sup\set*{ \dual{\psi_1}{\nu-\mu} + \dual{\psi_2}{\mu} + \dual{\psi_3}{\nu} - f(\psi_1, \psi_2, \psi_3) \given (\psi_1, \psi_2, \psi_3) \in X }
		\\&
		=
		\sup\set*{ \dual{\psi_1}{\nu-\mu} + \dual{\psi_2}{\mu-u_+} + \dual{\psi_3}{\nu-u_-} \given (\psi_1, \psi_2, \psi_3) \in D }
		\\&
		=
		\begin{cases}
			W_1(\mu,\nu) + \mnorm{u_+ - \mu} + \mnorm{u_- - \nu} & \text{if } \mu(\Omega) = \nu(\Omega) \\
			\infty & \text{if } \mu(\Omega) \ne \nu(\Omega)
		\end{cases}
	\end{align*}
	for all non-negative measures $(\mu, \nu) \in P$.
	Consequently,
	\begin{equation*}
		j_2
		=
		-\min\set*{ f^*(-A^*(\mu,\nu)) \given (\mu,\nu) \in P}
		=
		-\min_{\substack{\mu, \nu \geq 0\\ \mu(\Omega)=\nu(\Omega)}} \bracks*{ W_1(\mu,\nu)+ \mnorm{u_+-\mu}+\mnorm{u_{-}-\nu} }.
	\end{equation*}
	This finishes the proof of \eqref{eq:nice_identity}.
\end{proof}
A different proof for \eqref{eq:nice_identity}
can be obtained by using the ideas from
\cite[Proposition 24]{piccoli}.

\begin{example} \label{ex:blnorm}
Let~$u=\delta_{x_1}-\delta_{x_2}$ with~$x_1,x_2 \in \Omega$ and $x_1 \neq x_2$ be given. Then there holds~$\mnorm{u}=2$ but
\begin{equation*}
	\blnorm{u} =
	\min\set{2, \abs{x_1 - x_2}}
	,
\end{equation*}
see~\cite[Example 22]{piccoli}.
Similarly, we can check that for $x \in \Omega$, $h \in \R^d \setminus \set{0}$
with $x \pm h \in \Omega$
and $u = 2\delta_x - \delta_{x+h} - \delta_{x-h}$
we have
$\mnorm{u} = 4$ but
\begin{align*}
	\blnorm{u}
	=
	2 \min\set{2, \abs{h}}
	.
\end{align*}
\end{example}

\begin{remark}
	\label{rem:BL_and_W1}
	In the case that the diameter of $\Omega$
	is small enough
	and
	$u(\Omega) = 0$, we even have
	\begin{equation*}
		\blnorm{u} = W_1(u_+,u_-)
		.
	\end{equation*}
	Indeed, let $\psi \in \C$ with $\lip(\psi) \le 1$ be given
	such that
	$W_1(u_+,u_-) = \int_\Omega \psi \de u$.
	Note that the existence of $\psi$ follows from
	the Arzelà-Ascoli theorem.
	W.l.o.g., we can assume $\abs{\psi} \le \diam(\Omega) / 2$.
	Consequently,
	\begin{align*}
		W_1(\mu,\nu) - W_1(u_+,u_-)
		&\ge
		\int_\Omega \psi \de(\mu - \nu - u)
		=
		\int_\Omega \psi \de(\mu - u_+)
		-
		\int_\Omega \psi \de(\nu - u_-)
		\\&
		\ge
		-\frac{\diam(\Omega)}{2} \parens*{
			\mnorm{\mu - u_+}
			+
			\mnorm{\nu - u_-}
		}
		.
	\end{align*}
	This shows that the choice
	$\mu = u_+$ and $\nu = u_-$
	in \eqref{eq:nice_identity} is optimal
	whenever $\diam(\Omega) \le 2$.
\end{remark}

\subsection{A lifted setting} \label{subsec:liftedsetting}
In the following, let the canonical injection of~$\M$ into~$\Cod$ be denoted by
\begin{equation*}
\Pi \colon \M \to \Cod \quad \text{where} \quad \Pi u \coloneqq \eqclass{u}{0}.
\end{equation*}
Note that the continuity of this injection follows from the definitions.
Moreover, we define the lifted penalty term and forward operator by
\begin{equation} \label{def:extG}
G \colon \Cod \to [0,\infty] \quad \text{with} \quad G(\eqclass{u}{v})\coloneqq \begin{cases} \alpha \mnorm{\Tilde{u}} & \exists \tilde u \in \M \colon \eqclass{u}{v}=\Pi \tilde{u} \\  \infty &\text{else}  \end{cases}
\end{equation}
as well as 
\begin{equation}\label{def:extK}
    \Kb \colon \Cod \to Y \quad \text{with} \quad \Kb(\eqclass{u}{v})\coloneqq \int_\Omega k(x) \de u(x)+\int_\Omega Dk(x) \de v(x)  
\end{equation}
where the second integral has to be understood as
\begin{equation*}
    \int_\Omega Dk(x) \de v(x)\coloneqq\int_\Omega Dk(x) \frac{\de v}{\de \abs{v}}(x) \de \abs{v}(x).
\end{equation*}
The properties of $G$ and $\Kb$ will be investigated in
\cref{lem:propofliftG,lem:propofliftK}
below.

The range of $\Pi$ is not (weak*) closed.
Thus, it might not be possible to separate
elements $\eqclass{u}{v} \in \Cod \setminus \Ran(\Pi)$ from $\Ran(\Pi)$ by a continuous linear functional.
However, we can utilize the following characterization.
\begin{lemma}
	\label{lem:range_of_Pi}
	An element $\eqclass{u}{v} \in \Cod$ belongs to the range of $\Pi$
	if and only if is bounded by $\cnorm{\cdot}$, i.e., there exists~$C>0$ such that we have
	\begin{equation*}
		\abs{\ddual{\varphi}{\eqclass{u}{v}}} \le C \cnorm{\varphi} \qquad \forall \varphi \in \Co.
	\end{equation*}
\end{lemma}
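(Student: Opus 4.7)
The proof should split along the biconditional into an easy direction and a direction where we essentially use Hahn--Banach and the Riesz representation theorem.

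\textbf{Plan for the ``only if'' direction.} Suppose $\eqclass{u}{v} = \Pi \tilde u$ for some $\tilde u \in \M$. By definition of $\Pi$, for every $\varphi \in \Co$ we have $\ddual{\varphi}{\eqclass{u}{v}} = \ddual{\varphi}{\eqclass{\tilde u}{0}} = \dual{\varphi}{\tilde u}$, and the duality pairing between $\C$ and $\M$ gives $\abs{\dual{\varphi}{\tilde u}} \le \mnorm{\tilde u}\, \cnorm{\varphi}$. Hence the claim holds with $C = \mnorm{\tilde u}$.

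\textbf{Plan for the ``if'' direction.} Assume there exists $C > 0$ with $\abs{\ddual{\varphi}{\eqclass{u}{v}}} \le C \cnorm{\varphi}$ for all $\varphi \in \Co$. The assignment
\begin{equation*}
  T \colon \Co \to \R, \qquad T(\varphi) \coloneqq \ddual{\varphi}{\eqclass{u}{v}},
\end{equation*}
is linear and, by hypothesis, bounded as a map from the normed subspace $(\Co,\cnorm{\cdot})$ of $(\C,\cnorm{\cdot})$ into $\R$. By the Hahn--Banach extension theorem, $T$ admits a continuous linear extension $\tilde T \colon \C \to \R$ with operator norm at most $C$. The Riesz representation theorem for $C(\Omega)^* = \M$ then furnishes a measure $\tilde u \in \M$ with $\tilde T(\varphi) = \dual{\varphi}{\tilde u}$ for every $\varphi \in \C$. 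In particular, for every $\varphi \in \Co$ we have
\begin{equation*}
  \ddual{\varphi}{\eqclass{u}{v}} = T(\varphi) = \dual{\varphi}{\tilde u} = \ddual{\varphi}{\eqclass{\tilde u}{0}} = \ddual{\varphi}{\Pi \tilde u}.
\end{equation*}
Since the duality pairing between $\Co$ and $\Cod$ separates points of $\Cod$, this yields $\eqclass{u}{v} = \Pi \tilde u$, as claimed.

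\textbf{Main obstacle.} There is no real obstacle: Hahn--Banach requires nothing beyond the hypothesis of $\cnorm{\cdot}$-boundedness, and the Riesz representation theorem is the standard identification $\C^* \cong \M$. One subtle point worth flagging is that the extension produced by Hahn--Banach need not be unique, because $\Co$ is in general only a proper subspace of $\C$; however, any representative $\tilde u$ produced by the argument satisfies $\Pi \tilde u = \eqclass{u}{v}$, which is all that is needed. Density of $\Co$ in $\C$ (which would give uniqueness of $\tilde u$) is therefore not required for the statement as formulated.
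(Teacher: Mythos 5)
Your proof is correct. The difference from the paper's argument lies in the ``if'' direction: the paper extends the bounded linear functional on $\Co$ to $\C$ by invoking the \emph{density} of $\Co$ in $\C$, which yields a \emph{unique} bounded extension directly, and then identifies it with an element of $\M = \C^*$. You instead invoke Hahn--Banach, which works without any density hypothesis, and then apply the Riesz representation theorem. Your remark that density is not needed for the statement as formulated is accurate, and it is a slightly weaker hypothesis; the trade-off is that Hahn--Banach does not give uniqueness of the extension, which you correctly note is irrelevant here since only the existence of some $\tilde u \in \M$ with $\Pi\tilde u = \eqclass{u}{v}$ is required. (For the record, $\Co$ \emph{is} dense in $\C$ under the paper's standing assumptions on $\Omega$ -- e.g.\ by mollification or Stone--Weierstrass -- and the paper relies on this density elsewhere, for instance in the proof of \cref{lem:propofliftG}; so both routes are available.) One small wording point: your concluding appeal to ``the duality pairing between $\Co$ and $\Cod$ separates points of $\Cod$'' is just the statement that two elements of $\Cod$ agreeing on all of $\Co$ are equal, which holds by definition of $\Cod$ as the space of functionals on $\Co$; stating it that way avoids sounding like an extra hypothesis.
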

\begin{proof}
	It is clear that ``$\Rightarrow$'' holds.
	In order to prove ``$\Leftarrow$'',
	let $\eqclass{u}{v}$ be bounded by $\cnorm{\cdot}$.
	Thus, this linear functional is a bounded functional
	on the dense subspace $\Co$ of the normed space $\C$.
	Hence, it can be uniquely extended to a functional $\tilde u \in \C^* = \M$,
	i.e., we have
	\begin{equation*}
		\ddual{\varphi}{\eqclass{u}{v}}
		=
		\dualM{\varphi}{\tilde u}
		=
		\ddual{\varphi}{\eqclass{\tilde u}{0}} \qquad \forall \varphi \in \Co. 
	\end{equation*}
	This proves $\eqclass{u}{v} = \Pi \tilde u$.
\end{proof}
Next, we show that $G$ is a nice functional.
\begin{lemma} \label{lem:propofliftG}
The function~$G$ from~\eqref{def:extG} is proper, convex and~weak* lower semi-continuous on~$\Cod$. Its preconjugate~$H \colon \Co \to [0, \infty] $ is given by
\begin{align*}
	\forall \varphi \in \Co:
	\qquad
	H(\varphi)
	=
	\begin{cases} 0 & \varphi \in \mathbb{B}_\alpha, \\ \infty & \text{else}, \end{cases}
\end{align*}
where~$\mathbb{B}_\alpha:=\set{ \phi \in \C \given \cnorm{\phi}\leq \alpha }$.
\end{lemma}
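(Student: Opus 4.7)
The plan is to identify $H$ as the preconjugate of $G$ with respect to the $\Co$--$\Cod$ duality: once $H^* = G$ is verified, properness, convexity, and weak* lower semi-continuity of $G$ all follow for free, since a conjugate always enjoys these properties when the underlying functional is proper.

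Before turning to the conjugacy computation, I first check that $G$ is well-defined. The map $\Pi\colon \M \to \Cod$, $\tilde u \mapsto \eqclass{\tilde u}{0}$, is injective: if $\Pi \tilde u_1 = \Pi \tilde u_2$, then $\dual{\varphi}{\tilde u_1 - \tilde u_2}=0$ for every $\varphi \in \Co$, and by density of $\Co$ in $\C$ (via Tietze extension and mollification) the identity extends to all $\varphi \in \C$, forcing $\tilde u_1 = \tilde u_2$. Hence the representative $\tilde u$ in the definition of $G$ is uniquely determined, $G$ is proper because $G(\Pi 0) = 0$, and $G$ is convex because $\dom(G) = \Ran(\Pi)$ is a linear subspace of $\Cod$ and $\alpha\mnorm{\cdot}$ is convex.

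The functional $H$ is the indicator of the nonempty closed convex set $\mathbb{B}_\alpha \cap \Co$, which is closed in $\Co$ by the continuity of the embedding $\Co \hookrightarrow \C$; hence $H$ is proper, convex, and lower semi-continuous. Its conjugate is
\[
H^*(\eqclass{u}{v})
=
\sup\set*{\ddual{\varphi}{\eqclass{u}{v}} \given \varphi \in \Co,\; \cnorm{\varphi} \le \alpha}.
\]
I distinguish two cases. If $\eqclass{u}{v} = \Pi \tilde u$ for some $\tilde u \in \M$, the supremum reduces to $\sup\set{\dual{\varphi}{\tilde u} \given \varphi \in \Co,\; \cnorm{\varphi} \le \alpha}$, and I claim it equals $\alpha \mnorm{\tilde u}$. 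One inequality is trivial. For the reverse, I pick $\varphi_n \in \C$ with $\cnorm{\varphi_n} \le \alpha$ and $\dual{\varphi_n}{\tilde u} \to \alpha \mnorm{\tilde u}$, extend each $\varphi_n$ continuously to $\R^d$ by Tietze's theorem, compose with the Lipschitz clipping $s \mapsto \max\set{-\alpha, \min\set{\alpha, s}}$ to keep the sup-bound, and mollify. The resulting functions lie in $\Co \cap \mathbb{B}_\alpha$ and converge uniformly to $\varphi_n$ on $\Omega$, so passing to the limit produces the reverse inequality. If instead $\eqclass{u}{v} \notin \Ran(\Pi)$, then by \cref{lem:range_of_Pi} the functional is not bounded by $\cnorm{\cdot}$ on $\Co$, so there is a sequence $\seq{\varphi_n} \subset \Co$ with $\cnorm{\varphi_n} \le 1$ and $\ddual{\varphi_n}{\eqclass{u}{v}} \to \infty$; scaling by $\alpha$ gives $H^*(\eqclass{u}{v}) = \infty = G(\eqclass{u}{v})$.

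The main technical point is the density step of approximating a function in $\C$ of sup-norm at most $\alpha$ by elements of $\Co$ of the same sup-bound; this is exactly what the combination of Tietze extension, truncation, and mollification delivers. With $H^* = G$ in hand, the Fenchel--Moreau framework immediately yields that $G$ is proper, convex, and weak* lower semi-continuous on $\Cod$, and $H$ as displayed is its preconjugate.
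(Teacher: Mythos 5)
Your proposal is correct and follows essentially the same route as the paper: you prove $H^* = G$, split into the cases $\eqclass{u}{v}\in\Ran(\Pi)$ versus $\eqclass{u}{v}\notin\Ran(\Pi)$ (the latter handled via \cref{lem:range_of_Pi}), and then invoke Fenchel--Moreau for the stated properties of $G$. The only difference is that you spell out the density of $\mathbb{B}_\alpha\cap\Co$ in $\mathbb{B}_\alpha\subset\C$ via Tietze extension, truncation, and mollification (and add the minor observation that $\Pi$ is injective), whereas the paper simply asserts this density.
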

\begin{proof}
We first show~$H^*=G$. The claimed properties of~$G$ then follow immediately. Note that
\begin{equation*}
	H^*(\eqclass{u}{v}) = \sup\set*{ \ddual{\varphi}{\eqclass{u}{v}} \given \varphi \in \mathbb{B}_\alpha }.
\end{equation*}
We argue by a distinction of cases.
First, if $v = 0$, we immediately get~$H^*(\eqclass{u}{0}) = \alpha\mnorm{u}$
due to the density of $\mathbb{B}_\alpha \cap \Co$ in $\mathbb{B}_\alpha \subset \C$.
This shows that $H^*$ and $G$ coincide on the image of $\Pi$.
In case that $\eqclass{u}{v} \not \in \Im(\Pi)$,
we can utilize \cref{lem:range_of_Pi}
and obtain that for every $n \in \N$,
there exists $\varphi_n \in \Co$ with
\begin{equation*}
	\abs{\ddual{\varphi_n}{\eqclass{u}{v}}} \ge n \cnorm{\varphi_n}.
\end{equation*}
Since $\pm\alpha \varphi_n / \cnorm{\varphi_n} \in \mathbb{B}_\alpha$,
this implies $H^*(\eqclass{u}{v}) \ge n$.
Thus, we conclude $H^*(\eqclass{u}{v}) = G(\eqclass{u}{v}) = \infty$ if $\eqclass{u}{v} \not \in \Im(\Pi)$.
\end{proof}
Similarly, the operator $\Kb$ has nice properties
since it possesses a preadjoint.
\begin{lemma} \label{lem:propofliftK}
	The operator~$\Kb$ from~\eqref{def:extK} is well defined
	and sequentially weak*-to-strong continuous from~$\Cod$ to $Y$.
	Moreover, there holds~$\Kb=(\Kb_*)^*$ where~$\Kb_* \colon Y \to \Co$ satisfies~$\Kb_*y=K_* y$.
\end{lemma}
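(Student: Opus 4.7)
The plan is to follow the strategy of \cref{lem:proposofsparseop}: identify a preadjoint $\Kb_* \colon Y \to \Co$ so that well-definedness on the quotient and sequential weak*-to-weak continuity of $\Kb$ come for free from abstract duality, and then upgrade to strong convergence using the Hilbert structure of $Y$ together with Arzel\`a--Ascoli, powered by the $C^{1,\gamma}$ regularity of $k$.

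First I would verify that $\Kb_* y \coloneqq K_* y = (k(\cdot),y)_Y$ belongs to $\Co$ for every $y \in Y$. Indeed, since $k \in C^{1,\gamma}(\Omega;Y)$, the real-valued function $x \mapsto (k(x),y)_Y$ is continuously differentiable on $\Omega$ with gradient $x \mapsto (Dk(x),y)_Y \in \R^d$ that is even H\"older continuous of exponent $\gamma$. The induced estimate $\conorm{\Kb_* y} \le C_k \ynorm{y}$ shows that $\Kb_* \colon Y \to \Co$ is linear and bounded.

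Next, for an arbitrary representative $(u,v) \in \M \times \Md$ of an equivalence class $\eqclass{u}{v} \in \Cod$ and every $y \in Y$, I would compute
\begin{align*}
    \ddual{\Kb_* y}{\eqclass{u}{v}}
    &= \int_\Omega (k(x),y)_Y \de u(x) + \int_\Omega (Dk(x),y)_Y \de v(x) \\
    &= \left( y,\, \int_\Omega k(x) \de u(x) + \int_\Omega Dk(x) \de v(x) \right)_Y
\end{align*}
by pulling the $Y$-inner product through the (Bochner) integrals. This single identity yields two conclusions at once: the right-hand side depends only on the equivalence class $\eqclass{u}{v}$ (since the left-hand side does), which is precisely the well-definedness of the formula \eqref{def:extK} on the quotient; and simultaneously $\Kb = (\Kb_*)^*$, which in particular grants boundedness and sequential weak*-to-weak continuity of $\Kb$.

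To promote weak to strong convergence, I would mimic the closing argument of \cref{lem:proposofsparseop}. Given $\eqclass{u_k}{v_k} \weakstar \eqclass{u}{v}$ in $\Cod$, I would select $y_k \in Y$ with $\ynorm{y_k}=1$ realizing $\ynorm{\Kb \eqclass{u_k}{v_k}} = \ddual{\Kb_* y_k}{\eqclass{u_k}{v_k}}$, and pass to a subsequence with $y_k \weak \bar y$ in $Y$, $\ynorm{\bar y} \le 1$. The crucial step is to establish strong convergence $\Kb_* y_k \to \Kb_* \bar y$ in $\Co$: the sequences $\seq{\Kb_* y_k}$ and $\seq{\nabla \Kb_* y_k}$ are uniformly bounded in $C^{1,\gamma}(\Omega)$ and $C^{0,\gamma}(\Omega;\R^d)$, respectively, thanks to $k \in C^{1,\gamma}(\Omega;Y)$ and $\ynorm{y_k}\le 1$, while pointwise convergence of both is immediate from $y_k \weak \bar y$. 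Arzel\`a--Ascoli then upgrades these to uniform convergence, i.e., to convergence in $\Co$. Pairing yields $\ynorm{\Kb \eqclass{u_k}{v_k}} \to (\bar y, \Kb \eqclass{u}{v})_Y \le \ynorm{\Kb \eqclass{u}{v}}$, and combining with the weak lower semicontinuity of $\ynorm{\cdot}$ produces norm convergence. Since $Y$ is a Hilbert space, weak convergence together with norm convergence forces strong convergence, and a standard subsequence-subsequence argument extends the conclusion to the full sequence. The main technical obstacle is precisely the $C^1$-convergence $\Kb_* y_k \to \Kb_* \bar y$, and the assumed $C^{1,\gamma}$ regularity of $k$ is exactly what makes the Arzel\`a--Ascoli step go through.
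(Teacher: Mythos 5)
Your proof is correct and is exactly what the paper intends: the paper's own proof of this lemma simply says that it "follows by the same argument as" \cref{lem:proposofsparseop} using the $C^{1,\gamma}$ regularity of $k$, and you have carried out precisely that adaptation (preadjoint identification on $\Co$, well-definedness on the quotient via the duality identity, and the Arzel\`a--Ascoli upgrade to $C^1$-convergence of $\Kb_* y_k$ followed by the weak-plus-norm argument in the Hilbert space $Y$).
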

\begin{proof}
	The proof follows by the same argument as \cref{lem:proposofsparseop} noting that~$k \in C^{1,\gamma}(\Omega; Y)$. For the sake of brevity, further details are omitted.
\end{proof}
Now we consider the lifted minimization problem
\begin{align} \label{def:liftproblem}
    \tag{$\mathcal{P}$} 
    \min_{\eqclass{u}{v} \in \Cod} \mathcal{J}(\eqclass{u}{v}) \coloneqq \loss(\Kb(\eqclass{u}{v}))+G(\eqclass{u}{v}).
\end{align} 
The next proposition establishes its equivalence to Problem~\eqref{def:sparseintro}.
\begin{proposition} \label{prop:equivalence}
    An equivalence class~$\eqclass{u}{v}$ is a minimizer of Problem~\eqref{def:liftproblem} if and only if there hols~$\eqclass{u}{v}=\Pi \bar{u}$ for a solution~$\bar{u}$ of Problem~\eqref{def:sparseintro}.
\end{proposition}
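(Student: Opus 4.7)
My plan is to argue that $\Pi$ establishes a bijection, preserving objective values, between $\mathcal{M}(\Omega)$ and the effective domain of $\mathcal{J}$; both implications of the claimed equivalence are then immediate. The argument splits into three small verifications.

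First, I would observe that $\Pi$ is injective. Indeed, if $\Pi u = \Pi u'$, then $\dual{\varphi}{u - u'} = 0$ for every $\varphi \in \Co$, and since $\Co$ is dense in $\C$ while $u - u' \in \M = \C^*$, this forces $u = u'$. Second, for any $u \in \M$ I would compute the value of $\mathcal{J}$ at $\Pi u = \eqclass{u}{0}$. Using the representative $(u,0)$ in the formula \eqref{def:extK} gives $\Kb(\Pi u) = \int_\Omega k(x)\de u(x) + 0 = K u$, while the definition \eqref{def:extG} yields $G(\Pi u) = \alpha \mnorm{u}$. Hence
\begin{equation*}
    \mathcal{J}(\Pi u) = \loss(Ku) + \alpha \mnorm{u} = J(u) \qquad\forall u \in \M.
\end{equation*}

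Third, I would use that $G(\eqclass{u}{v}) = \infty$ whenever $\eqclass{u}{v} \notin \Ran(\Pi)$, by the very definition \eqref{def:extG}. Because $\mathcal{J}(\Pi 0) = \loss(0)$ is finite, any minimizer of~\eqref{def:liftproblem} must have $G$ finite and therefore belong to $\Ran(\Pi)$; by the first step, it corresponds to a uniquely determined $\bar u \in \M$. Combining these three observations gives
\begin{equation*}
    \inf_{\eqclass{u}{v} \in \Cod} \mathcal{J}(\eqclass{u}{v})
    = \inf_{\tilde u \in \M} \mathcal{J}(\Pi \tilde u)
    = \inf_{\tilde u \in \M} J(\tilde u),
\end{equation*}
and the bijection $\Pi$ identifies the minimizers of the two problems, yielding both directions of the equivalence at once.

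I do not anticipate any serious obstacle here: the only subtle point is that the integral formula in \eqref{def:extK} depends a priori on the chosen representative of the equivalence class, but this ambiguity is absorbed once one recalls that $\Kb = (\Kb_*)^*$ with $\Kb_* y = K_* y \in \Co$, as established in \cref{lem:propofliftK}, so that $\Kb$ is genuinely defined on $\Cod$. Evaluating on the canonical representative $(u,0)$ therefore produces the correct value $Ku$, and the rest of the argument is bookkeeping.
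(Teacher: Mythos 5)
Your proposal is correct and follows essentially the same route as the paper's one-line proof, which simply observes that $\dom(\mathcal{J}) = \Im(\Pi)$; you have unfolded that observation into its three constituent parts (injectivity of $\Pi$, the identity $\mathcal{J}\circ\Pi = J$, and $G=\infty$ off $\Ran(\Pi)$), each of which is verified correctly.
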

\begin{proof}
    This follows immediately noting that~$\dom(\mathcal{J})= \Im(\Pi)$.
\end{proof}
\subsection{The main result} \label{subsec:mainresult}
We recall that we assume \cref{ass:functions} throughout the paper.
In the following, we assume the existence of a stationary point
\begin{align} \label{eq:sparsemeasSSC}
    \bar u= \sum^N_{j=1} \Bar{\lambda}_j \delta_{\Bar{x}_j} \quad \text{where} \quad \bar\lambda_j \neq 0, \bar{x}_j \in  \Intr(\Omega) \qquad\forall j=1,\dots,N
\end{align}
of \eqref{def:sparseintro}, i.e.,
the associated dual variable~$\bar{p}=-K_* \nabla \loss(K \bar{u})$ satisfies~$\Bar{p} \in \Co \cap \alpha \partial  \mnorm{\bar{u}} $.
Moreover, we define the sets~$\mathcal{A}$ as well as~$\I_+$ and~$\I_{-}$ by
\begin{equation}
    \label{eq:active_sets}
    \A \coloneqq \supp \Bar{u}=\{\Bar{x}_j\}^N_{j=1}, \quad \I_\pm \coloneqq \set*{x\in \Omega \given \Bar{p}(x)=\pm \alpha} \setminus \mathcal{A}. 
\end{equation}
Finally, set~$s_j =\sgn(\Bar{p}(\Bar{x}_j))$ for every~$\bar x_j \in \mathcal{A}$. The following theorem provides a tangible no-gap second-order condition for Problem~\eqref{def:sparseintro} at~$\Bar{u}$ as well as its equivalence to the quadratic growth of~$J$ w.r.t $\blnorm{\cdot}$ in the vicinity of~$\Bar{u}$.
We emphasize that we do not assume that the sets $\I_\pm$ are finite,
in particular, our setting is more general than the usual requirement \eqref{eq:strictcompintro},
which is equivalent to $\I_\pm = \emptyset$.
\begin{theorem} \label{thm:main}
Consider a measure~$\Bar{u}$ of the form~\eqref{eq:sparsemeasSSC} and assume that $\bar{p}=-K_* \nabla \loss(K \bar{u}) \in \Co \cap \alpha \partial  \mnorm{\bar{u}} $
is two times continuously differentiable around all~$\bar{x}_j \in \mathcal{A}$.
Then the following statements are equivalent.
\begin{enumerate}[label=\textnormal{(B\arabic*)}]
	\item \label{point:struc} There exists~$\theta>0$ with
		\begin{equation}
			\label{eq:def_hes_p}
			s_j \nabla^2 \Bar{p}(\Bar{x}_j) \leq_L -\theta \Id \qquad\forall \Bar{x}_j \in \A. 
		\end{equation}
		Moreover,
		\begin{equation}
			\label{eq:SOC2}
			\nabla^2 \loss(K \Bar{u}) \parens[\bigg]{\Kb \eqclass[\bigg]{\mu}{\sum_{j = 1}^n V_j \delta_{\bar x_j}}}^2
			-
			\sum_{j=1}^N  \bracks*{
				\frac{1}{\bar{\lambda}_j}
				\nabla^2\bar{p}(\bar{x}_j) {V}_j^2
			}
			>
			0
		\end{equation}
		holds for all
		$(\mu, V) \in \parens*{\mathcal C \times (\R^d)^N} \setminus \set{0}$,
		where the critical cone~$\mathcal{C} \subset \M$ is given by
		\begin{equation} \label{eq:condonlambda}
			\mathcal{C} \coloneqq \set*{\sum_{j = 1}^N \lambda_j \delta_{\Bar{x}_j}+\widehat{\mu}_+ -\widehat{\mu}_{-} \given \lambda \in \R^N,~ \widehat\mu_+ \in \mathcal{M}(\I_+)^+,~\widehat \mu_{-} \in \mathcal{M}(\I_-)^+  }.
		\end{equation}
	\item \label{point:quad} There exist~$\gamma>0$ and $\varepsilon >0$ such that there holds
		\begin{equation}
			\label{eq:quad_grwth}
			J(u)-J(\Bar{u}) \geq \gamma \blnorm{u-\bar{u}}^2 \qquad\forall u \in \M, \blnorm{u-\bar{u}} \leq \varepsilon. 
		\end{equation}
\end{enumerate}
\end{theorem}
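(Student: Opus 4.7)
The strategy is to pass to the lifted problem~\eqref{def:liftproblem} and apply the abstract no-gap theorem~\cref{thm:abstractSSC}. By \cref{prop:equivalence} together with \cref{lem:equivalence_of_norms_co}, the quadratic growth~\eqref{eq:quad_grwth} around $\bar u$ with respect to $\blnorm{\cdot}$ is equivalent to a quadratic growth of $\mathcal{J}$ around $\Pi \bar u$ with respect to $\codnorm{\cdot}$: since $G \equiv +\infty$ outside $\Ran(\Pi)$, only directions in $\Pi(\M)$ need to be tested, and on $\M$ the two norms $\blnorm{\cdot}$ and $\codnorm{\Pi\,\cdot}$ are equivalent. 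I would then verify \cref{ass:abstract} for $X = \Cod$, $F = L \circ \Kb$ and $G$: the space $\Co$ is separable, and by \cref{lem:propofliftK}, $\Kb$ is weak*-to-strong continuous so $F$ admits the expansion~\eqref{eq:hadamard_taylor_expansion} with $F'(\Pi \bar u) = -\bar p \in \Co$, the bilinear form $h \mapsto \nabla^2 L(K\bar u)(\Kb h)^2$ being sequentially weak* continuous. \cref{thm:abstractSSC} then reduces~(B2) to the conjunction of~\eqref{eq:SSC} and~\eqref{eq:NDC} for~\eqref{def:liftproblem} at $\Pi \bar u$, and the remaining task is to establish the equivalence of this conjunction with~(B1).

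The core technical step is to analyze the weak* second subderivative $G''(\Pi \bar u, \bar p; \cdot)$ on $\Cod$. Following the implicit strategy sketched in \cref{subsec:contribution}, I would work with the preconjugate $H = I_{\mathbb{B}_\alpha}$ on $\Co$ (see \cref{lem:propofliftG}) and study $H''(\bar p, \bar u; \cdot)$ in the strong sense of \cref{def:strong-strong}. Under~\eqref{eq:def_hes_p} each $s_j \bar p$ attains a strict local maximum equal to $\alpha$ at $\bar x_j$ with uniformly negative-definite Hessian; a second-order Taylor expansion of the pointwise constraints $\pm \phi(x) \le \alpha$ around $\bar p$ would then yield strong--strong twice epi-differentiability of $H$ at $\bar p$ for $\bar u$, together with a quadratic upper bound of the type~\eqref{eq:descent_lemma} for a suitable infimal convolution $\tilde H = H \oplus H_2$ built from finitely many evaluation and gradient-evaluation functionals $\zeta_i \in \Co$ at the $\bar x_j$. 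Feeding this into \cref{lem:for_NDC} produces~\eqref{eq:NDC}, while dualizing via \cref{lem:conjugatelowergen} combined with the construction of recovery sequences in $\M$ identifies $G''(\Pi \bar u, \bar p; \cdot)$ as being finite exactly on equivalence classes admitting a representation $\Pi \mu + \sum_{j=1}^N \eqclass{0}{V_j \delta_{\bar x_j}}$ with $(\mu, V) \in \mathcal{C} \times (\R^d)^N$, with value equal to the quadratic form $-\sum_{j} \bar\lambda_j^{-1}\,\nabla^2 \bar p(\bar x_j) V_j^2$. This directly turns~\eqref{eq:SSC} into~\eqref{eq:SOC2}.

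The converse implication, from SSC+NDC back to~(B1), is obtained by testing the abstract conditions against well-chosen directions in $\Cod$. Testing~\eqref{eq:NDC} on sequences $h_k$ proportional to $\delta_{\bar x_j + t_k V} - \delta_{\bar x_j}$, normalized to unit $\codnorm{\cdot}$-norm and weak*-converging to zero, forces a uniform strict Loewner bound $s_j \nabla^2 \bar p(\bar x_j) \leq_L -\theta \Id$ at every $\bar x_j \in \A$, since $\A$ is finite. Plugging the $(\mu, V)$-representation into~\eqref{eq:SSC} then reproduces~\eqref{eq:SOC2}.

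The principal obstacle lies in the computation of $G''(\Pi \bar u, \bar p; \cdot)$. Elements of $\Cod$ are equivalence classes $\eqclass{u}{v}$ modulo the annihilator $C^\bot$, and rescaled Dirac shifts $(\delta_{\bar x_j + t\xi} - \delta_{\bar x_j})/t$ do admit non-trivial weak* limits in $\Cod$ that $G''$ must detect through the $V_j$-components, despite corresponding to sequences of diverging Radon norm. The matching construction of recovery sequences in $\M$ that simultaneously perturb the masses on $\A$, add positive mass on the potentially infinite continua $\I_\pm$, and shift the Dirac locations in the directions $V_j$, all while preserving the rate dictated by the quotient norm on $\Cod$, is the delicate technical part that is handled by the two subsequent \cref{sec:impliesSSC,sec:strucimplyNDC}.
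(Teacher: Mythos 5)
Your overall structure matches the paper closely: lift to $\Cod$, invoke \cref{thm:abstractSSC} via \cref{prop:equivalence}, \cref{lem:equivalence_of_norms_co} and \cref{lem:taylor_in_real_life} to turn \ref{point:quad} into \eqref{eq:SSCmeasures}$+$\eqref{eq:NDCmeas}, compute $G''(\eqclass{\bar u}{0},\bar p;\cdot)$ through the preconjugate $H$ and \cref{lem:conjugatelowergen}, and obtain \eqref{eq:NDCmeas} through \cref{lem:for_NDC}. That is indeed how the paper proceeds.

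There is, however, a concrete gap in your converse step ``\eqref{eq:SSCmeasures}$+$\eqref{eq:NDCmeas} $\Rightarrow$ \ref{point:struc}''. You propose to extract the Hessian bound \eqref{eq:def_hes_p} by feeding into \eqref{eq:NDCmeas} normalized first-order difference quotients $h_k \propto \delta_{\bar x_j + t_k V} - \delta_{\bar x_j}$. These are not admissible test directions: by \cref{ex:blnorm} their $\codnorm{\cdot}$-norm is comparable to $t_k\abs{V}$, and after normalization
\begin{equation*}
	\ddual{\varphi}{\eqclass{h_k}{0}}
	\;\propto\;
	\frac{\varphi(\bar x_j + t_k V) - \varphi(\bar x_j)}{t_k\abs{V}}
	\;\longrightarrow\;
	\frac{\nabla\varphi(\bar x_j)^\top V}{\abs{V}}
	\qquad \forall \varphi \in \Co ,
\end{equation*}
so $\eqclass{h_k}{0}$ converges weak* to a nonzero element of the form $\eqclass{0}{V\delta_{\bar x_j}}/\abs{V}$ rather than to $0$. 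Condition \eqref{eq:NDCmeas} only constrains sequences with $h_k \weakstar 0$, so this choice gives no information. There is also a second, independent defect: for $\bar\lambda_j$ of the ``wrong'' sign, $\bar u + t_k h_k$ places Dirac masses of opposite signs at $\bar x_j$ and $\bar x_j + t_k V$, so $\mnorm{\bar u + t_k h_k} > \mnorm{\bar u}$ and the quotient in \eqref{eq:NDCmeas} actually blows up to $+\infty$ regardless of $\nabla^2\bar p(\bar x_j)$. The fix (used in \cref{lem:sscimplies}) is to test with the \emph{symmetric} second-order difference
\begin{equation*}
	\mu_k
	=
	\bar\lambda_j \,
	\frac{
		\delta_{\bar x_j - (t_k/\bar\lambda_j) v}
		- 2\,\delta_{\bar x_j}
		+ \delta_{\bar x_j + (t_k/\bar\lambda_j) v}
	}{2 t_k} ,
\end{equation*}
for which the first-order Taylor terms in $\ddual{\varphi}{\Pi\mu_k}$ cancel, giving $\Pi\mu_k \weakstar 0$, while the scaling by $\bar\lambda_j$ keeps all Dirac coefficients in $\bar u + t_k\sigma_k\mu_k$ of the sign of $\bar\lambda_j$, so that $G(\eqclass{\bar u}{0} + t_k h_k) = G(\eqclass{\bar u}{0})$ for large $k$. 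It is only this second-order increment that makes the $-t_k^{-1}\ddual{\bar p}{h_k}$ term converge to $-\sgn(\bar\lambda_j)\nabla^2\bar p(\bar x_j)v^2/(2\abs{\bar\lambda_j})$, which then yields \eqref{eq:def_hes_p}.

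One smaller imprecision: for the \cref{lem:for_NDC} construction you describe the family $\zeta_i$ as ``evaluation and gradient-evaluation functionals at the $\bar x_j$''. These are not elements of $Z = \Co$, and \cref{lem:for_NDC} requires $\zeta_i \in Z$. The paper instead uses smooth Urysohn bumps $\varphi_j \in \Co$ supported near $\bar x_j$ plus the additional function $\varphi_{N+1} = (1 - \sum_j \varphi_j)\alpha^{-1}\bar p$, and produces the descent estimate \eqref{eq:descent_lemma} by a direct check on $\hat p = p - \sum_j s_j\beta_j\varphi_j$ using \eqref{eq:hess_p_psd_locally_around_bar_x}; the key point is that $H_2$ must live on a finite-dimensional subspace of $\Co$.
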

The proof of this result is quite involved.
The implication ``\ref{point:quad}$\Rightarrow$\ref{point:struc}''
is given in \cref{lem:sscimplies}.
The reverse implication is given in \cref{sec:impliesSSC,sec:strucimplyNDC},
see also the comment after \cref{lem:sscimplies}.

In case that $\loss$ is convex,
the condition \eqref{eq:SOC2}
simplifies.
\begin{corollary} \label{coroll:nondegfromssc}
	In addition to the assumptions of \cref{thm:main},
	we assume that $\loss$ is convex.
	Consider the following statement.
	\begin{enumerate}[label=\textnormal{(C\arabic*)}]
		\item\label{item:convex_1}
			Condition \eqref{eq:def_hes_p} holds for some $\theta > 0$
			and
			\begin{equation} \label{eq:defofHess}
				(K\mu,\nabla^2 \loss(K \bar{u})K \mu )_Y >0  \qquad\forall \mu \in \mathcal{C} \setminus \set{0}
			\end{equation}
			is satisfied,
			where the critical cone $\mathcal C$ is defined in \eqref{eq:condonlambda}.
		\item\label{item:convex_2}
			There exist~$\gamma>0$ and $\varepsilon >0$ such that
			the quadratic growth condition \eqref{eq:quad_grwth} holds.
		\item\label{item:convex_3}
			Condition \eqref{eq:def_hes_p} holds for some $\theta > 0$
			and
			the matrix
			\begin{equation*}
				\parens*{
					( k(\bar x_i), \nabla^2 \loss(K \bar u) k(\bar x_j) )_Y
				}_{i,j = 1}^N
			\end{equation*}
			is positive definite.
		\item\label{item:convex_4}
			Condition \eqref{eq:def_hes_p} holds for some $\theta > 0$
			and
			$\operatorname{dim}\parens*{ \operatorname{span} \set*{k(\Bar{x}_1),\dots,k(\Bar{x}_N) } }=N$.
	\end{enumerate}
	Then,
	\ref{item:convex_1}
	and
	\ref{item:convex_2}
	are equivalent.
	In case that sets $\I_+$ and $\I_-$ are empty,
	i.e., if $\abs{\bar p(x)} < \alpha$ holds for all $x \in \Omega \setminus \mathcal{A}$,
	we get the equivalency
	of
	\ref{item:convex_1}--\ref{item:convex_3}.
	If, additionally, $\loss$ is strongly convex,
	we get the equivalency
	of
	\ref{item:convex_1}--\ref{item:convex_4}.
\end{corollary}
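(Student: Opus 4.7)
The plan is to reduce everything to Theorem~\ref{thm:main}, which already gives the equivalence of \ref{point:struc} and the quadratic growth \ref{item:convex_2}; the remaining task is to show that, under convexity of $\loss$, condition \ref{point:struc} collapses to \ref{item:convex_1}, and then that the simpler reformulations \ref{item:convex_3} and \ref{item:convex_4} emerge under the additional hypotheses $\I_\pm = \emptyset$ and strong convexity. Note that both \ref{point:struc} and \ref{item:convex_1} contain \eqref{eq:def_hes_p}; only \eqref{eq:SOC2} vs.\ \eqref{eq:defofHess} needs to be matched.

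For the direction \ref{point:struc}${\Rightarrow}$\ref{item:convex_1}, it suffices to test \eqref{eq:SOC2} with $V=0$: recalling that $\Kb\eqclass{\mu}{0}=K\mu$, the inequality immediately reduces to \eqref{eq:defofHess}. For the converse under convexity of $\loss$, I would take any nonzero pair $(\mu,V)\in\mathcal{C}\times(\R^d)^N$ and split cases. If $V=0$ then $\mu\ne 0$, the first term of \eqref{eq:SOC2} is $(K\mu,\nabla^2\loss(K\bar u)K\mu)_Y>0$ by \eqref{eq:defofHess} and the second sum vanishes. If $V\ne 0$, then using $\bar\lambda_j=s_j|\bar\lambda_j|$ and \eqref{eq:def_hes_p},
\begin{equation*}
-\frac{1}{\bar\lambda_j}\nabla^2\bar p(\bar x_j)
=-\frac{s_j}{|\bar\lambda_j|}\nabla^2\bar p(\bar x_j)
\ge_L \frac{\theta}{|\bar\lambda_j|}\Id,
\end{equation*}
so the second sum in \eqref{eq:SOC2} is bounded below by $\theta\sum_j|V_j|^2/|\bar\lambda_j|>0$; the first term is nonnegative by convexity of $\loss$. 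This closes the equivalence \ref{item:convex_1}${\Leftrightarrow}$\ref{item:convex_2} via Theorem~\ref{thm:main}.

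Assume now $\I_\pm=\emptyset$. Then $\mathcal{C}=\operatorname{span}\{\delta_{\bar x_1},\dots,\delta_{\bar x_N}\}$ and the map $\R^N\ni\lambda\mapsto\sum_j\lambda_j\delta_{\bar x_j}\in\mathcal{C}$ is a linear bijection with $K(\sum_j\lambda_j\delta_{\bar x_j})=\sum_j\lambda_j k(\bar x_j)$. Hence \eqref{eq:defofHess} rewrites as
\begin{equation*}
\lambda^\top M\lambda
=\Bigl(\sum_i\lambda_i k(\bar x_i),\,\nabla^2\loss(K\bar u)\sum_j\lambda_j k(\bar x_j)\Bigr)_Y>0
\qquad\forall\lambda\in\R^N\setminus\{0\},
\end{equation*}
with $M_{ij}=(k(\bar x_i),\nabla^2\loss(K\bar u)k(\bar x_j))_Y$, which is exactly positive definiteness of $M$. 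This yields \ref{item:convex_1}${\Leftrightarrow}$\ref{item:convex_3}. Finally, if $\nabla^2\loss(K\bar u)\ge_L\sigma\Id$ with $\sigma>0$, then $\lambda^\top M\lambda\ge\sigma|\sum_j\lambda_j k(\bar x_j)|_Y^2\ge 0$ with equality if and only if $\sum_j\lambda_j k(\bar x_j)=0$; thus $M$ is positive definite iff $\{k(\bar x_1),\dots,k(\bar x_N)\}$ is linearly independent, giving \ref{item:convex_3}${\Leftrightarrow}$\ref{item:convex_4}.

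The argument is essentially bookkeeping once Theorem~\ref{thm:main} is in hand; the only step requiring a moment of care is the sign manipulation making $-\bar\lambda_j^{-1}\nabla^2\bar p(\bar x_j)$ positive definite from \eqref{eq:def_hes_p}, which is what lets the convexity of $\loss$ absorb the $V$-dependence in \eqref{eq:SOC2} and reduce the second-order condition to \eqref{eq:defofHess}.
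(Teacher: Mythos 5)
The proposal is correct and follows the same route as the paper: reduce everything to \cref{thm:main} by showing \ref{item:convex_1}$\Leftrightarrow$\ref{point:struc} under convexity (testing \eqref{eq:SOC2} with $V=0$ for one direction, and using \eqref{eq:def_hes_p} together with nonnegativity of the Hessian term for the other), then treat the equivalencies with \ref{item:convex_3} and \ref{item:convex_4} as linear-algebra bookkeeping. Your version is slightly more careful than the paper's \cref{lem:B1_implies_SSC} in carrying the $1/\abs{\bar\lambda_j}$ factor in the lower bound $-\bar\lambda_j^{-1}\nabla^2\bar p(\bar x_j)\ge_L\theta\abs{\bar\lambda_j}^{-1}\Id$, but the argument is otherwise identical.
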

The equivalencies of
\ref{item:convex_1}, \ref{item:convex_3} and \ref{item:convex_4}
(under the respective assumptions)
are easy to check.
It thus remains to check the equivalency of
\ref{item:convex_1} and \ref{item:convex_2},
and this will be verified by showing that
\ref{item:convex_1}
is equivalent to
condition \ref{point:struc} from \cref{thm:main},
see \cref{lem:B1_implies_SSC} below.
Note that \ref{item:convex_4} corresponds to the
non-degeneracy of $\bar p$, see \eqref{eq:nondegintro}.

\begin{remark}
	\label{rem:convex_case}
	In order to prove \cref{thm:main},
	we use the lifted setting and work in the space $\Cod$.
	This is also apparent from the condition \eqref{eq:SOC2}.
	In the convex setting of \cref{coroll:nondegfromssc},
	however,
	this is no longer the case
	and its characterizations can be stated purely in the measure space $\M$,
	see \eqref{eq:defofHess}.
\end{remark}

In order to prove \cref{thm:main},
we are going to apply the theory from \cref{sec:nogapabstract}.
To this end, we first verify the Taylor formula \eqref{eq:hadamard_taylor_expansion}.
\begin{lemma}
	\label{lem:taylor_in_real_life}
	The functional $F(\eqclass{u}{v} ) \coloneqq \loss( \Kb( \eqclass{u}{v} ) )$ satisfies \eqref{eq:hadamard_taylor_expansion}
	with the choices
	\begin{align*}
		F'(\eqclass{\bar u}{0}) \eqclass{u}{v}
		&\coloneqq
		(\nabla \loss(K \bar u), \Kb \eqclass{u}{v} )_Y
		=
		\ddual{-\bar p}{\eqclass{u}{v} },
		\\
		F''(\eqclass{\bar u}{0}) \eqclass{u}{v}^2
		&\coloneqq
		\nabla^2 \loss( K \bar u) \parens*{ \Kb \eqclass{u}{v} }^2
	\end{align*}
	for arbitrary $\bar u \in \M$ and associated
	$\bar{p}=-K_* \nabla \loss(K \bar{u}) \in \Co$.
\end{lemma}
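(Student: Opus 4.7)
The plan is to combine the linearity of $\Kb$ with its sequential weak*-to-strong continuity (established in \cref{lem:propofliftK}) and a standard second-order Taylor expansion of $L \in C^2$ at the point $\bar y \coloneqq K \bar u = \Kb \eqclass{\bar u}{0}$. First I would use linearity of $\Kb$ to rewrite $\Kb(\eqclass{\bar u}{0} + t_k h_k) = \bar y + t_k y_k$ where $y_k \coloneqq \Kb h_k$. From $h_k \weakstar h$ in $\Cod$ together with \cref{lem:propofliftK}, the sequence $(y_k)$ converges strongly to $\Kb h$ in $Y$; in particular, $(y_k)$ is bounded in $Y$ and $t_k y_k \to 0$ strongly.

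Next I would apply the integral form of Taylor's theorem to $L$ around $\bar y$ and obtain
\begin{equation*}
L(\bar y + t_k y_k) - L(\bar y) - t_k (\nabla L(\bar y), y_k)_Y - \frac{t_k^2}{2} \nabla^2 L(\bar y) y_k^2 = t_k^2 \int_0^1 (1-s)\, \bracks[\big]{\nabla^2 L(\bar y + s t_k y_k) - \nabla^2 L(\bar y)} y_k^2 \, \de s.
\end{equation*}
Since $L \in C^2$, the map $\nabla^2 L \colon Y \to \mathcal{L}(Y,Y)$ is continuous, so $\nabla^2 L(\bar y + s t_k y_k) \to \nabla^2 L(\bar y)$ in operator norm uniformly in $s \in [0,1]$. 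Together with boundedness of $\norm{y_k}_Y$, this forces the right-hand side divided by $t_k^2$ to tend to zero, which is precisely the Hadamard-type expansion \eqref{eq:hadamard_taylor_expansion} once the proposed $F'$ and $F''$ are matched with the first- and second-order terms of the expansion.

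Finally, the identification $F'(\eqclass{\bar u}{0}) h = \ddual{-\bar p}{h}$ follows from the adjoint relation $\Kb = (\Kb_*)^*$ with $\Kb_* = K_*$ from \cref{lem:propofliftK}, which yields
\begin{equation*}
(\nabla L(K \bar u), \Kb h)_Y = \ddual{K_* \nabla L(K \bar u)}{h} = \ddual{-\bar p}{h}
\end{equation*}
by the definition of $\bar p$, while $F''(\eqclass{\bar u}{0})[\cdot,\cdot] = (\nabla^2 L(K\bar u) \Kb \cdot, \Kb \cdot)_Y$ is manifestly a bounded bilinear form on $\Cod \times \Cod$ since $\Kb \in \mathcal{L}(\Cod, Y)$ and $\nabla^2 L(K\bar u) \in \mathcal{L}(Y,Y)$. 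I anticipate no substantial obstacle in this proof: the crucial point is that the weak*-to-strong continuity of $\Kb$ promotes the weak* Hadamard-type statement into an ordinary strong-type second-order Taylor expansion in $Y$, which is then routine from $L \in C^2$.
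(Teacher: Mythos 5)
Your proof is correct and follows essentially the same route as the paper, which simply observes that the claim is clear from $\loss \in C^2$ together with linearity and boundedness of $\Kb$. One small remark: you invoke the weak*-to-strong continuity of $\Kb$ to obtain strong convergence of $y_k = \Kb h_k$, but in fact only boundedness of $\seq{y_k}$ in $Y$ is needed for the argument, and this already follows from boundedness of $\Kb$ together with the fact that the weak* convergent sequence $\seq{h_k}$ is bounded (Banach--Steinhaus); this is exactly the more parsimonious reasoning implicit in the paper's one-line proof.
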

\begin{proof}
	This is clear as $\loss$ is assumed to be twice continuously Fréchet differentiable
	and $\Kb$ is linear and bounded.
\end{proof}
We first point out the following consequence of the abstract result in \cref{thm:abstractSSC}.
\begin{corollary} \label{coroll:nondegforsparse}
	Let a measure $\Bar{u} \in \M$ be given and set
	$\Bar{p} \coloneqq -K_* \nabla \loss(K \bar{u})$.
	Then the following statements are equivalent.
\begin{enumerate}
\item \label{point:SSC}  There holds
	the second-order condition
\begin{equation*} \label{eq:SSCmeasures}
(\Kb h,\nabla^2 \loss(K \Bar{u}) \Kb h)_Y   + G''( \eqclass{\bar{u}}{0}, \Bar{p}; h) >0 \qquad\forall h \in \Cod \setminus \{0\} \tag{SSC-$\mathcal{M}$}
\end{equation*}
and the non-degeneracy condition
\begin{equation*}
	\label{eq:NDCmeas}
	\tag{NDC-$\mathcal{M}$}
		\begin{aligned}
			&\text{for all $\seq{t_k} \subset (0,\infty)$, $\seq{h_k} \subset \Cod$
			with $t_k \searrow 0$, $h_k \weakstar 0$}
			\text{ and $\codnorm{h_k} = 1$, we have }\\
			&\qquad
			\liminf_{k \to \infty} \parens[\bigg]{
				\frac{1}{t^2_k} \parens*{G(\eqclass{\Bar{u}}{0}+t_k h_k)-G(\eqclass{\Bar{u}}{0})}
				-
				\frac{1}{t_k}\ddual{\Bar{p}}{ h_k}
			}
			> 0
			.
		\end{aligned}
\end{equation*}
\item There exist~$\gamma>0$ and $\varepsilon >0$ such that there holds
\begin{align*}
	J(u)-J(\Bar{u}) \geq \gamma \blnorm{u-\bar{u}}^2 \qquad\forall u \in \M, \blnorm{u-\bar{u}} \leq \varepsilon. 
\end{align*}
\end{enumerate}
\end{corollary}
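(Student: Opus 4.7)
The plan is to apply the abstract no-gap result \cref{thm:abstractSSC} to the lifted formulation~\eqref{def:liftproblem} and then transfer the resulting quadratic growth back to the original problem~\eqref{def:sparseintro}, exploiting that (i)~the two problems are equivalent in the sense of \cref{prop:equivalence}, and (ii)~the norm $\codnorm{\eqclass{\cdot}{0}}$ coincides with $\blnorm{\cdot}$ up to equivalence of norms, by \cref{lem:equivalence_of_norms_co}.

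First, I would check the hypotheses of \cref{thm:abstractSSC} for the choices $X=\Cod$, $Z=\Co$, $F(\eqclass{u}{v}) \coloneqq \loss(\Kb \eqclass{u}{v})$ and $G$ given by~\eqref{def:extG}. The space $\Co$ is separable, so \cref{ass:abstract}(a) holds. \cref{lem:propofliftG} makes $G$ proper, convex and weak* lower semicontinuous with $\Bar p\in \partial G(\eqclass{\bar u}{0})$. The Taylor expansion \eqref{eq:hadamard_taylor_expansion} required by \cref{ass:abstract}(c) is exactly \cref{lem:taylor_in_real_life}, which also identifies $F'(\eqclass{\bar u}{0}) = -\bar p$ and $F''(\eqclass{\bar u}{0}) h^2 = \nabla^2\loss(K\bar u)(\Kb h)^2$. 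Finally, sequential weak* continuity of $h \mapsto F''(\eqclass{\bar u}{0}) h^2$ follows because \cref{lem:propofliftK} shows that $\Kb\colon\Cod\to Y$ is sequentially weak*-to-strong continuous, so whenever $h_k \weakstar h$ in $\Cod$ we have $\Kb h_k \to \Kb h$ strongly in $Y$ and the continuous bilinear form $\nabla^2 \loss(K\bar u)$ passes to the limit.

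With the abstract theorem applicable, \ref{point:SSC} (which is literally \eqref{eq:SSC}+\eqref{eq:NDC} rewritten in our notation after inserting $F'(\eqclass{\bar u}{0}) = -\bar p$) becomes equivalent to the existence of $\gamma,\varepsilon>0$ such that
\begin{equation*}
\mathcal{J}(\eqclass{u}{v}) - \mathcal{J}(\eqclass{\bar u}{0}) \;\ge\; \tfrac{\gamma}{2}\,\codnorm{\eqclass{u}{v} - \eqclass{\bar u}{0}}^2 \qquad \forall \eqclass{u}{v}\in\Cod,\; \codnorm{\eqclass{u}{v} - \eqclass{\bar u}{0}}\le\varepsilon.
\end{equation*}
Since $G(\eqclass{u}{v})=\infty$ outside $\Ran(\Pi)$ by \cref{lem:propofliftG}, this inequality is trivial for $\eqclass{u}{v}\notin\Ran(\Pi)$ and therefore equivalent to the same estimate taken only on $\eqclass{u}{v}=\Pi u=\eqclass{u}{0}$ for $u\in\M$. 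Using $\mathcal{J}(\Pi u)=J(u)$ (which follows from \cref{prop:equivalence}/its proof and the definition of $\Kb$, $G$), this restriction rewrites as
\begin{equation*}
J(u)-J(\bar u) \;\ge\; \tfrac{\gamma}{2}\,\codnorm{\eqclass{u-\bar u}{0}}^2 \qquad \forall u\in\M,\; \codnorm{\eqclass{u-\bar u}{0}}\le\varepsilon.
\end{equation*}

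The final step is the norm swap. By \cref{lem:equivalence_of_norms_co} there exist constants $0<c\le C$ with $c\,\blnorm{w}\le\codnorm{\eqclass{w}{0}}\le C\,\blnorm{w}$ for all $w\in\M$. Consequently the $\codnorm{\eqclass{\cdot}{0}}$-neighborhood of radius $\varepsilon$ contains a $\blnorm{\cdot}$-neighborhood of radius $\varepsilon/C$, and the quadratic estimate gains/loses only a constant factor under the substitution, yielding \ref{point:quad} (and vice versa). The most delicate bookkeeping is this translation between norms and the domain restriction to $\Ran(\Pi)$; once one observes that $G\equiv\infty$ off the image of $\Pi$ makes the growth condition vacuous there, the equivalence of the two formulations is immediate from \cref{thm:abstractSSC,lem:equivalence_of_norms_co}.
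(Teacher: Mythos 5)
Your argument is correct and follows essentially the same route as the paper's proof: verify \cref{ass:abstract} via \cref{lem:taylor_in_real_life}, establish sequential weak* continuity of $h\mapsto F''(\eqclass{\bar u}{0})h^2$ from \cref{lem:propofliftK}, apply \cref{thm:abstractSSC} to obtain quadratic growth of $\mathcal J$ in $\codnorm{\cdot}$, restrict to $\Ran(\Pi)$ using $\dom(\mathcal J)=\Im(\Pi)$, and conclude with the norm equivalence of \cref{lem:equivalence_of_norms_co}. One small imprecision: \eqref{eq:NDCmeas} is not \emph{literally} the abstract \eqref{eq:NDC} rewritten, since the latter carries the extra summand $\tfrac12 F''(\bar x)h_k^2$; you should note, as the paper does, that this term vanishes in the liminf precisely because $h_k\weakstar 0$ and you have already shown $F''(\eqclass{\bar u}{0})$ to be sequentially weak*-continuous, so the two conditions are equivalent.
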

\begin{proof}
	Due to \cref{lem:taylor_in_real_life},
	\cref{ass:abstract} is satisfied.
By definition, there holds
\begin{align*}
	F''(\eqclass{\bar u}{0}) h^2
	=
	(\Kb h,\nabla^2 \loss(K \Bar{u}) \Kb h)_Y
	=
	(\Kb h,\nabla^2 \loss(\Kb(\eqclass{(\Bar{u}}{0)})) \Kb h)_Y.
\end{align*}
Moreover, due to \cref{lem:propofliftK}, the mapping
\begin{align*}
    h \mapsto (\Kb h,\nabla^2 \loss(K \Bar{u}) \Kb h)_Y
\end{align*}
is sequentially weak*-continuous in~$\Cod$.
This shows that the second assumption of \cref{thm:abstractSSC}
is satisfied and
this also allows to drop the last term
\begin{equation*}
	\frac{1}{2}(\Kb h_k,\nabla^2 \loss(K\bar{u}) \Kb h_k)_Y
\end{equation*}
which would appear in the NDC.
Consequently, see \cref{thm:abstractSSC},~\eqref{eq:SSCmeasures} and~\eqref{eq:NDCmeas} are equivalent to
\begin{align*}
    \mathcal{J}(\eqclass{u}{v})-\mathcal{J}(\eqclass{\Bar{u}}{0}) \geq \tilde\gamma \codnorm{\eqclass{u-\Bar{u}}{v}}^2
		\qquad\forall \eqclass{u}{v}\in \Cod, \codnorm{\eqclass{u-\Bar{u}}{v}} \leq \varepsilon
\end{align*}
for some~$\tilde\gamma>0$ and~$\varepsilon>0$. The claimed statement now follows from~$\dom(\mathcal{J})= \Im(\Pi)$ as well as \cref{lem:equivalence_of_norms_co}.
\end{proof}
 
Thus, it suffices to show that \eqref{eq:SSCmeasures} together with~\eqref{eq:NDCmeas} are equivalent to~\ref{point:struc}.
Since this is rather technical, it will be split into several parts.
In this section, we show the implication of~\ref{point:struc} by~\eqref{eq:SSCmeasures} and~\eqref{eq:NDCmeas}.
\begin{lemma} \label{lem:sscimplies}
    Let~$\Bar{u}$ and~$\Bar{p}$ satisfy the assumptions of \cref{thm:main} and assume that~\eqref{eq:SSCmeasures} and~\eqref{eq:NDCmeas} hold.
		Then~\ref{point:struc} is satisfied.
\end{lemma}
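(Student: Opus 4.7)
The plan is to derive both parts of \ref{point:struc} by probing \eqref{eq:SSCmeasures} and \eqref{eq:NDCmeas} with two distinct families of explicit test sequences in $\Cod$: a ``moving atom'' family to extract the global bilinear-form condition \eqref{eq:SOC2} from \eqref{eq:SSCmeasures}, and a ``symmetric second difference'' family to extract the pointwise curvature estimate \eqref{eq:def_hes_p} from \eqref{eq:NDCmeas}.

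For \eqref{eq:SOC2}, fix a nonzero $(\mu,V) \in \mathcal{C} \times (\R^d)^N$ with $\mu = \sum_j \lambda_j \delta_{\bar x_j} + \hat\mu_+ - \hat\mu_-$, set $h \coloneqq \eqclass{\mu}{\sum_j V_j \delta_{\bar x_j}}$, and consider
\begin{equation*}
	u_k \coloneqq \sum_{j=1}^N (\bar\lambda_j + t_k\lambda_j)\, \delta_{\bar x_j + t_k V_j/(\bar\lambda_j + t_k\lambda_j)} + t_k(\hat\mu_+ - \hat\mu_-), \qquad h_k \coloneqq \Pi(u_k - \bar u)/t_k.
\end{equation*}
A first-order Taylor expansion of the atomic locations verifies $h_k \weakstar h$ with $\codnorm{h_k}$ uniformly bounded. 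Since $\operatorname{sgn}(\bar\lambda_j+t_k\lambda_j)=s_j$ for small $t_k$ and the support points of $u_k$ are pairwise disjoint (and disjoint from $\I_\pm$), one computes $\mnorm{u_k} - \mnorm{\bar u} = t_k\bracks*{\sum_j s_j\lambda_j + \mnorm{\hat\mu_+} + \mnorm{\hat\mu_-}}$. Expanding $\bar p$ to second order around each $\bar x_j$ and using $\nabla \bar p(\bar x_j) = 0$ shows that the $O(t_k)$ contributions to $G(\Pi u_k) - G(\Pi \bar u)$ and $\ddual{\bar p}{\Pi(u_k - \bar u)}$ cancel identically, leaving
\begin{equation*}
	\frac{G(\Pi u_k) - G(\Pi \bar u) - \ddual{\bar p}{\Pi(u_k - \bar u)}}{t_k^2/2} \longrightarrow -\sum_{j=1}^N \frac{1}{\bar\lambda_j}\nabla^2\bar p(\bar x_j) V_j^2.
\end{equation*}
This provides an upper bound on $G''(\Pi \bar u, \bar p; h)$; combined with $h \neq 0$ (which follows from $(\mu,V)\neq 0$ by testing $h$ against suitable separating $\Co$-functions), \eqref{eq:SSCmeasures} then delivers \eqref{eq:SOC2}.

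For the curvature estimate, fix $\bar x_j \in \mathcal{A}$ and $V \in \R^d \setminus \{0\}$ and introduce
\begin{equation*}
	v_k \coloneqq s_j \parens*{\delta_{\bar x_j + t_k V} - 2\delta_{\bar x_j} + \delta_{\bar x_j - t_k V}}, \qquad C_k \coloneqq \codnorm{\Pi v_k}, \qquad \tilde h_k \coloneqq \Pi v_k / C_k.
\end{equation*}
Continuity of $\nabla \varphi$ for every $\varphi \in \Co$ forces $\ddual{\varphi}{\Pi v_k} = o(t_k)$, while duality against $\Co$-mollifications of the profile $x \mapsto \abs{V\cdot(x - \bar x_j)}/\abs{V}$ gives the matching lower bound $C_k = 2 t_k \abs{V} + o(t_k)$. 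Hence $\tilde h_k \weakstar 0$, $\codnorm{\tilde h_k} = 1$, and $\rho_k \coloneqq t_k/C_k \to 1/(2\abs{V})$. The sign $s_j$ is chosen so that $\bar\lambda_j - 2 s_j \rho_k$ retains $\operatorname{sgn}(\bar\lambda_j)$ for small $\rho_k$, which produces an exact variation-norm cancellation and hence $G(\Pi \bar u + t_k \tilde h_k) = G(\Pi \bar u)$. A second-order Taylor expansion of $\bar p$ around $\bar x_j$ then yields
\begin{equation*}
	\frac{G(\Pi \bar u + t_k \tilde h_k) - G(\Pi \bar u)}{t_k^2} - \frac{\ddual{\bar p}{\tilde h_k}}{t_k} \longrightarrow -\frac{s_j \nabla^2 \bar p(\bar x_j) V^2}{2\abs{V}}.
\end{equation*}
Arbitrariness of $V$ and \eqref{eq:NDCmeas} force $s_j \nabla^2 \bar p(\bar x_j) V^2 < 0$ for all $V \neq 0$; compactness of the unit sphere in $\R^d$ and finiteness of $\mathcal{A}$ then produce a uniform $\theta > 0$ with $s_j \nabla^2 \bar p(\bar x_j) \leq_L -\theta \Id$ for every $\bar x_j \in \mathcal{A}$.

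The main technical obstacle will be the sharp asymptotic $C_k = 2 t_k \abs{V} + o(t_k)$: the upper bound is elementary, but the matching lower bound requires constructing $\Co$-approximations of the nonsmooth profile $\abs{V \cdot (\cdot - \bar x_j)}/\abs{V}$ with controlled $\conorm{\cdot}$-norm, for which the uniform local quasiconvexity of $\Intr(\Omega)$ together with \cref{lem:LUC_implies_C1_embeds_Lip,lem:quasiconvex_taylor} will be essential. Once that asymptotic is in hand, the careful matching of the $s_j$-signs in $v_k$ and in the absolute-value expansions is exactly what converts the abstract \eqref{eq:NDCmeas} into the uniform pointwise curvature bound claimed in \eqref{eq:def_hes_p}.
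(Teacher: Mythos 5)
Your proposal is essentially correct, and the second half (the symmetric second-difference family probing \eqref{eq:NDCmeas}) tracks the paper's own construction closely. The paper uses $\mu_k = \bar\lambda_j\bigl(\delta_{\bar x_j - (t_k/\bar\lambda_j)v} - 2\delta_{\bar x_j} + \delta_{\bar x_j + (t_k/\bar\lambda_j)v}\bigr)/(2t_k)$ with $|v|=1$, which by \cref{ex:blnorm} has $\blnorm{\mu_k} = 1$ exactly, and then corrects with a bounded factor $\sigma_k = \codnorm{\Pi\mu_k}^{-1} \in [L^{-1},1]$. Your variant instead normalizes by $C_k := \codnorm{\Pi v_k}$ directly. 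One point to flag: the claimed sharp asymptotic $C_k = 2t_k\abs{V} + o(t_k)$ is not available, since $\codnorm{\eqclass{\cdot}{0}}$ and $\blnorm{\cdot}$ are only equivalent (\cref{lem:equivalence_of_norms_co}), not equal, and the mollification argument you sketch changes $\cnorm{\psi}$ and hence $\conorm{\psi}$ by a non-negligible amount. Consequently $\rho_k = t_k/C_k$ is only bounded above and below, and your phrase ``retains $\sgn(\bar\lambda_j)$ for small $\rho_k$'' is misleading: $\rho_k$ does \emph{not} tend to zero. You need $2\rho_k < \abs{\bar\lambda_j}$, which holds once $\abs{V}$ is large enough (since $\rho_k \le L/(2\abs{V})$), and by homogeneity of the curvature inequality this is enough. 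After passing to a subsequence with $\rho_k \to c > 0$, the $\liminf$ becomes $-c\, s_j \nabla^2\bar p(\bar x_j) V^2$, and \eqref{eq:NDCmeas} then forces $s_j \nabla^2\bar p(\bar x_j) V^2 < 0$; the paper's $\blnorm$-normalization is cleaner precisely because it avoids this subsequence step.

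The first half of your proposal is a genuinely different and, in fact, more complete route than what the paper's written proof of \cref{lem:sscimplies} gives. The paper's proof only probes \eqref{eq:SSCmeasures} with $h = \Pi\mu$, $\mu \in \mathcal C \setminus \{0\}$, and thereby establishes \eqref{eq:defofHess}, i.e., the $V=0$ case of \eqref{eq:SOC2}. By the paper's own \cref{coroll:nondegfromssc}, condition \ref{item:convex_1} (= \eqref{eq:def_hes_p} + \eqref{eq:defofHess}) is only equivalent to \ref{point:struc} when $\loss$ is \emph{convex}; for a general $\mathcal C^2$ loss with indefinite Hessian, \eqref{eq:defofHess} does not imply \eqref{eq:SOC2} for directions with $V \ne 0$. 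The paper's implicit repair is that, once \eqref{eq:def_hes_p} is known from the NDC-part, \cref{thm:twiceepiofG} gives the formula $G''(\eqclass{\bar u}{0},\bar p;\eqclass{\mu}{\nu}) = -\sum_j \bar\lambda_j^{-1}\nabla^2\bar p(\bar x_j)V_j^2$ on the domain \eqref{eq:finalformofmu} (and $\infty$ elsewhere), whence \eqref{eq:SSCmeasures} \emph{is} \eqref{eq:SOC2}; but this is not spelled out in the proof of \cref{lem:sscimplies}. Your moving-atom family $u_k$ achieves the same end without any forward reference: it is precisely a recovery sequence witnessing $G''(\eqclass{\bar u}{0},\bar p;h) \le -\sum_j \bar\lambda_j^{-1}\nabla^2\bar p(\bar x_j)V_j^2$ for $h = \eqclass{\mu}{\sum_j V_j\delta_{\bar x_j}}$, so that plugging into \eqref{eq:SSCmeasures} yields \eqref{eq:SOC2} for all $(\mu,V)\neq 0$ directly. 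This is self-contained, and in fact your $u_k$ is essentially the recovery sequence of \cref{lem:weaksforpert}/\cref{thm:twiceepiofG}, imported here early. Do make sure to verify the injectivity claim $h \ne 0 \Leftarrow (\mu,V) \ne 0$ (as in the density/injectivity argument for $A^*$ in \cref{prop:lowerboundonepiofG}), and note that for the variation-norm computation of $\mnorm{u_k}$ you rely on the atoms $\bar x_j + t_kV_j/(\bar\lambda_j + t_k\lambda_j)$ being pairwise distinct and disjoint from $\supp\hat\mu_\pm$ for $k$ large, which requires $t_k$ small and the assumption $\bar x_j \in \Intr(\Omega)$, $\bar x_j \notin \I_\pm$.
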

\begin{proof}
We start by showing that~\eqref{eq:NDCmeas} implies the definiteness of the Hessian at all~$\Bar{x}_j \in \mathcal{A}$.
For this purpose,
we construct a suitable sequence~$\seq{h_k} \subset \Cod  $ with $\codnorm{h_k} =1$,~$k\in\N$,
as well as~$h_k \rightharpoonup^* 0 $ in~$\Cod$ and insert it into~\eqref{eq:NDCmeas}.
More in detail,
let~$\seq{t_k} \subset (0,\infty)$ denote an arbitrary sequence with~$t_k \searrow 0$
and fix~$\Bar{x}_j \in \A$ as well as~$v \in \R^d$,~$\abs{v}=1$.
Define
\begin{align*}
 \tilde h_k= \Pi \mu_k \quad \text{where} \quad \mu_k=\bar{\lambda}_j \frac{\left(\delta_{\Bar{x}_j-(t_k/\bar{\lambda}_j) v} - 2 \delta_{\bar{x}_j} + \delta_{\bar{x}_j+(t_k/\Bar{\lambda}_j) v}\right)}{2 t_k}
\end{align*}
where~$\bar{\lambda}_j \neq 0$ denotes coefficient associated to~$\delta_{\Bar{x}_j}$.
For $k$ large enough, we have~$(t_k/\bar{\lambda}_j) \abs{v} \leq 1$.
Thus, \cref{ex:blnorm} yields
$\blnorm{\mu_k}=1$. 
Now,
\cref{lem:equivalence_of_norms_co}
implies
$1 \le \codnorm{\tilde h_k}\le L$
for some $L > 0$, since~$\tilde h_k= \Pi \mu_k$.
Consequently,
$\sigma_k := \codnorm{\tilde h_k}^{-1}$
satisfies
$\sigma_k \in [L^{-1}, 1]$
and we set
\begin{equation*}
	h_k := \sigma_k \tilde h_k
	.
\end{equation*}

For arbitrary $\varphi \in \Co$,
we can use a Taylor expansion at $\bar x_j$ to obtain
\begin{align*}
	\lim_{k \rightarrow \infty} \ddual{\varphi}{\tilde h_k}
	=
	\lim_{k \rightarrow \infty} \Bar{\lambda}_j \frac{\left(\varphi(\Bar{x}_j-(t_k/\bar{\lambda}_j) v)-2 \varphi(\Bar{x}_j)+\varphi(\Bar{x}_j+(t_k/\bar{\lambda}_j) v)\right)}{2t_k}
	=
	0.
\end{align*}
Consequently,~$\tilde h_k \rightharpoonup^* 0$ and $h_k \rightharpoonup^* 0$ in~$\Cod$.

Finally, observe that there holds
\begin{align*}
     G(\eqclass{\Bar{u}}{0}+t_k h_k)-G(\eqclass{\Bar{u}}{0})=0
\end{align*}
for all~$k \in \N$ large enough.
Since $\bar p$ is assumed to be twice differentiable at $\bar x_j$,
we can perform a second-order Taylor expansion
to get
\begin{align*}
     \lim_{k \rightarrow \infty} - \frac{1}{t_k}\ddual{\Bar{p}}{\tilde h_k}  &=  \lim_{k \rightarrow \infty} -\Bar{\lambda}_j\frac{\left(\Bar{p}(\Bar{x}_j-(t_k/\bar{\lambda}_j) v)-2 \Bar{p}(\Bar{x}_j)+\Bar{p}(\Bar{x}_j+(t_k/\bar{\lambda}_j) v) \right)}{2t^2_k} \\ &
     = -\frac{\nabla^2 \Bar{p}(\Bar{x}_j)v^2}{2 \Bar{\lambda}_j}=-\sgn(\Bar{p}(\Bar{x}_j))\frac{\nabla^2 \Bar{p}(\Bar{x}_j)v^2}{2 \abs{\Bar{\lambda}_j}}
     .
\end{align*}
Summarizing the previous observations,
\eqref{eq:NDCmeas} implies 
\begin{equation*}
	-\sgn(\Bar{p}(\Bar{x}_j))\frac{\nabla^2 \Bar{p}(\Bar{x}_j)v^2}{2 \abs{\Bar{\lambda}_j}}
	=
	\liminf_{k \rightarrow \infty} \sigma_k^{-1} \parens*{
		\frac{1}{t^2_k} \left(G(\eqclass{\Bar{u}}{0}+t_k h_k)-G(\eqclass{\Bar{u}}{0})\right)- \frac{1}{t_k}\ddual{\Bar{p}}{ h_k}
	}
	>0.
\end{equation*}
Since ~$v\in\R^d$ with $\abs{v}=1$ was arbitrary,
this shows that the Hessian is definite at~$\Bar{x}_j \in \A$.
 
 Next, we prove that~\eqref{eq:SSCmeasures} implies the definiteness of~$\nabla^2 \loss(K \Bar{u})$ in the sense of~\eqref{eq:defofHess}.
 We do not argue by contradiction.
 Let a measure
 \begin{align*}
     \mu= \sum_{j\in \mathcal{A}} \lambda_j \delta_{x_j} + \widehat{\mu}_{+}-\widehat{\mu}_{-} \in \mathcal{C} \setminus \set{0}
 \end{align*}
 be given and denote by~$h= \Pi \mu$ the corresponding equivalence class.
 Then there holds~$h \neq 0$ and
 \begin{align*}
     0 \leq G''( \eqclass{\bar{u}}{0}, \Bar{p}; h)
     \leq \liminf_{k \to \infty}  \frac{G(\eqclass{\Bar{u}}{0}+t_k h)-G(\eqclass{\Bar{u}}{0})-t_k\ddual{\Bar{p}}{h}}{t^2_k/2}
 \end{align*}
 for an arbitrary but fixed sequence~$\seq{t_k} \subset (0,\infty)$ with~$t_k \searrow 0$. Now, for~$k$ large enough, we conclude
 \begin{align*}
     G(\eqclass{\Bar{u}}{0}&+t_k h)-G(\eqclass{\Bar{u}}{0})-t_k \ddual{\Bar{p}}{h}
     \\ &= \sum_{j\in \mathcal{A} } \left \lbrack \alpha \abs{\Bar{\lambda}_j+t_k \lambda_j}-\alpha\abs{\Bar{\lambda}_j}- t_k \lambda_j \Bar{p}(\Bar{x}_j) \right \rbrack+ \alpha t_k \mnorm{\widehat{\mu}_+} + \alpha t_k \mnorm{\widehat{\mu}_{-}}- t_k \ddual{\Bar{p}}{\widehat{\mu}_{+}-\widehat{\mu}_{-}}
		 \\
		 &= 0
 \end{align*}
 noting that~$\bar{p}(\Bar{x}_j)=s_j \alpha $, $\Bar{\lambda}_j \neq 0$ and $\abs{\Bar{\lambda}_j}=s_j \Bar{\lambda}_j$ for all~$\Bar{x}_j \in \mathcal{A}$ as well as
 \begin{align*}
     \langle \Bar{p},\widehat{\mu}_+ \rangle= \alpha \mnorm{\widehat{\mu}_+}, \quad \langle \Bar{p},\widehat{\mu}_- \rangle=- \alpha \mnorm{\widehat{\mu}_-}
 \end{align*}
 by construction of~$\I_{\pm}$.
 Consequently, \eqref{eq:SSCmeasures} implies
 \begin{align*}
     (\Kb h,\nabla^2 \loss(K \Bar{u}) \Kb h)_Y
     =
     (\Kb h,\nabla^2 \loss(K \Bar{u}) \Kb h)_Y   + G''( \eqclass{\bar{u}}{0}, \Bar{p}; h)
     >
     0
 \end{align*}
 for this particular choice of~$h$, finishing the proof.
\end{proof}
It remains to prove~$\ref{point:struc} \Rightarrow \eqref{eq:SSCmeasures}+\eqref{eq:NDCmeas}$.
For the sake of readability this will be done in two steps.
In \cref{sec:impliesSSC},
we compute the weak* second subderivative of $G$
and this enables us to prove that
\ref{point:struc} implies \eqref{eq:SSCmeasures},
see \cref{lem:B1_implies_SSC}.
As a byproduct, we also show
the equivalency of
\ref{point:struc}
and
\ref{item:convex_1},
see \cref{lem:B1_implies_SSC}.
The implication of \eqref{eq:NDCmeas}
by \ref{point:struc}
is addressed in \cref{sec:strucimplyNDC}.

We close this section with a final observation.
Note that \cref{thm:main} assumes the structure \eqref{eq:sparsemeasSSC}
of $\bar u$.
The next lemma shows that \eqref{eq:sparsemeasSSC}
is ``almost'' necessary for 
\eqref{eq:NDCmeas}
and, consequently,
for the quadratic growth condition \ref{point:quad}.
In fact, whenever $\bar u$ is not a finite sum of Dirac measures,
these conditions
\eqref{eq:NDCmeas} and \ref{point:quad}
cannot be true.

\begin{lemma}
	\label{lem:ndc_implies_isolated_support_2}
	Let $\bar u \in \M$ and $\bar p \in \C \cap \alpha\partial \mnorm{\bar u}$ be given.
	If \eqref{eq:NDCmeas} holds,
	then
	all points in $\supp(\bar u)$ are isolated.
	In particular,
	\begin{equation*}
		\bar u = \sum_{j = 1}^N \bar\lambda_j \delta_{\bar x_j}
	\end{equation*}
	for some $N \in \N$, $\bar\lambda_j \in \R$ and $\bar x_j \in \Omega$.
\end{lemma}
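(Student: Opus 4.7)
The plan is to argue by contradiction. Suppose that some $\bar x \in \supp \bar u$ is \emph{not} isolated; I will construct sequences $t_k \searrow 0$ and $h_k \in \Cod$ with $\codnorm{h_k} = 1$ and $h_k \weakstar 0$ along which the bracketed expression in \eqref{eq:NDCmeas} vanishes identically.

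First I would localize. Since $\bar p \in \alpha \partial \mnorm{\bar u}$ forces $\supp \bar u_\pm \subset \set{\bar p = \pm \alpha}$, after possibly switching signs we may assume $\bar p(\bar x) = +\alpha$. Continuity of $\bar p$ gives an open neighbourhood $U$ of $\bar x$ on which $\bar p > 0$, so $\supp \bar u_- \cap U = \emptyset$, $\bar u$ coincides with $\bar u_+ \ge 0$ on $U$, and $\bar p \equiv \alpha$ on $\supp \bar u_+ \cap U$. In particular, $\bar x$ is not isolated in $\supp \bar u_+$.

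The technical heart is to produce, for a suitable $\varepsilon_k \searrow 0$, nonzero signed measures $\mu_k \in \M$ supported in $\supp \bar u_+ \cap \overline{B_{\varepsilon_k}(\bar x)}$ satisfying
\begin{equation*}
	\mu_k(\Omega) = 0, \qquad \int_\Omega (z - \bar x)\, \de \mu_k(z) = 0 \in \R^d, \qquad \bar u + \mu_k \ge 0.
\end{equation*}
In the prototypical one-directional atomic case, $\mu_k = a_k(\delta_{x_k} - \delta_{\bar x}) + b_k(\delta_{x_{k+1}} - \delta_{\bar x})$ works with coefficients $a_k, b_k$ determined by the two scalar constraints and bounded by $\bar u(\set{x_k})$ and $\bar u(\set{x_{k+1}})$ respectively. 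In higher dimensions or for diffuse $\bar u_+$, the construction uses up to $d+1$ atoms from the accumulating sequence, or a mass- and moment-matched replacement of a piece of $\bar u_+ \lfloor_{B_{\varepsilon_k}(\bar x)}$; carrying this out in full generality is where I expect the main technical obstacle to lie. Given such $\mu_k$, the three constraints yield $\mnorm{\bar u + \mu_k} = \mnorm{\bar u}$, so $G(\Pi(\bar u + \mu_k)) = G(\Pi \bar u)$, while $\supp \mu_k \subset \set{\bar p = \alpha}$ together with $\mu_k(\Omega) = 0$ gives $\ddual{\bar p}{\Pi \mu_k} = 0$.

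Setting $\sigma_k := \codnorm{\Pi \mu_k} > 0$, $t_k := \sigma_k$, and $h_k := \Pi \mu_k / \sigma_k$ immediately gives $\codnorm{h_k} = 1$, $t_k \searrow 0$, and an identically vanishing bracketed quantity in \eqref{eq:NDCmeas}. It remains to verify $h_k \weakstar 0$. For this I would represent $\mu_k = -\nabla \cdot \tau_k$ in the distributional sense, with $\tau_k$ a compactly supported $\R^d$-valued measure of total variation $\abs{\tau_k}(\Omega) \asymp W_1(\mu_k^+, \mu_k^-) \asymp \blnorm{\mu_k} \asymp \sigma_k$ (via Beckmann's minimal-flow formulation of $W_1$ together with \cref{lem:equivalence_of_norms_co} and \cref{rem:BL_and_W1}); the first-moment condition translates into $\tau_k(\Omega) = 0$. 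Integration by parts then yields, for all $\varphi \in \Co$,
\begin{equation*}
	\ddual{\varphi}{\Pi \mu_k}
	= \int_\Omega \nabla \varphi \cdot \de \tau_k
	= \int_\Omega \bigl[\nabla \varphi - \nabla \varphi(\bar x)\bigr] \cdot \de \tau_k,
\end{equation*}
whence $\abs{\ddual{\varphi}{h_k}} \le C\, \omega_{\nabla \varphi}(\varepsilon_k) \to 0$ with $\omega_{\nabla \varphi}$ the modulus of continuity of the continuous field $\nabla \varphi$ on the compact set $\Omega$. This establishes $h_k \weakstar 0$, contradicts \eqref{eq:NDCmeas}, and forces every point of $\supp \bar u$ to be isolated. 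Since $\supp \bar u$ is a closed subset of the compact $\Omega$, it is then finite, yielding the stated representation $\bar u = \sum_{j=1}^N \bar \lambda_j \delta_{\bar x_j}$.
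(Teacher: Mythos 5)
You correctly identify the mechanism (exhibit a perturbation sequence on which the bracketed quantity in \eqref{eq:NDCmeas} vanishes and which weak*-converges to zero) and correctly reduce to a neighbourhood of $\bar x \in \supp \bar u_+$ with $\bar p \equiv \alpha$ on $\supp\bar u_+$ nearby. The overall strategy matches the paper's. However, there is a genuine gap, which you yourself flag: the construction of nonzero $\mu_k$ supported in $\supp\bar u_+ \cap \overline{B_{\varepsilon_k}(\bar x)}$ satisfying the \emph{exact} constraints $\mu_k(\Omega) = 0$, $\int (z - \bar x)\,\de\mu_k = 0$, $\bar u + \mu_k \ge 0$ is not carried out except in a one-dimensional atomic prototype. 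Making this work in general requires a separate argument (e.g.\ choosing $d+2$ disjoint Borel sets of positive $\bar u_+$-mass inside $B_{\varepsilon_k}(\bar x)$, setting $\mu_k$ as a linear combination of the restrictions $\bar u_+|_{A_i}$, and using that the $d+1$ linear constraints on $d+2$ coefficients admit a nontrivial solution, then rescaling for nonnegativity); this is doable but is exactly where the proof currently stops.

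The paper sidesteps that obstacle entirely by \emph{not} asking for exact first-moment cancellation. After fixing a sequence $x_k \to x$ in $\supp\bar u$ with $s_k := \abs{x_k - x}$, $s_k \le s_{k-1}/2$ and $(x_k - x)/s_k \to v$, it takes the completely explicit three-body perturbation
\[
\hat h_k = \bigl((1-\lambda_k)\delta_x + \lambda_k \delta_{x_{k-1}}\bigr)m_k - \bar u|_{B_{r_k}(x_k)},\qquad \lambda_k = s_k/s_{k-1},\ m_k = \bar u(B_{r_k}(x_k)),\ r_k = s_k/k,
\]
so the first moment around $x$ is $m_k\bigl[\lambda_k(x_{k-1}-x) - (x_k - x)\bigr] + O(r_k m_k) = m_k s_k\bigl[(x_{k-1}-x)/s_{k-1} - (x_k-x)/s_k\bigr] + o(s_k m_k)$, which is $o(t_k)$ with $t_k := \blnorm{\hat h_k}\asymp s_k m_k$ but not zero. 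This approximate cancellation is enough to push through the weak* verification by a direct Taylor expansion of $\varphi\in\Co$, using only \cref{lem:LUC_implies_C1_embeds_Lip,lem:quasiconvex_taylor}; no Beckmann flow, no intrinsic $W_1$ on a possibly non-convex $\Omega$, and no case distinction on the local structure of $\supp\bar u_+$. Your Beckmann-flow verification of $h_k \weakstar 0$ is plausible but adds machinery and needs the additional check that the optimal flow $\tau_k$ can be taken supported near $\bar x$ (this is a point where a non-convex $\Omega$ or an accumulation point on $\partial\Omega$ needs care). In short: relax your exact moment constraint to asymptotic cancellation of order $o(t_k)$, and the construction difficulty disappears — this is the step you are missing.
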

\begin{proof}
	We prove the contraposition.
	We suppose that $x \in \supp(\bar u)$ is not isolated.
	W.l.o.g., we assume $x \in \supp(\bar u_+)$.
	The case $x \in \supp(\bar u_-)$ can be handled analogously.

	From $\bar p \in \alpha \partial\mnorm{\bar u}$,
	we get $\bar p(x) = \alpha$.
	Due to the continuity of $\bar p$,
	$\supp(\bar u_-)$ has a positive distance
	$\delta > 0$
	from $x$.

	Since $x$ is not an isolated point of $\supp(\bar u)$,
	there exists a sequence $\seq{x_k} \subset \supp(\bar u) \setminus \set{x}$
	with $x_k \to x$.
	We set $s_k \coloneqq \abs{x_k - x}$
	and, w.l.o.g., we assume $(x_k - x) / s_k \to v \in \R^d$
	and $s_k \le s_{k-1} / 2$.
	Next, we choose
	\begin{align*}
		r_k &\coloneqq s_k / k
		,
		&
		\lambda_k &\coloneqq s_k / s_{k-1} \in [0,1/2]
		,
		\\
		\mu_k &\coloneqq \bar u_{| B_{r_k}(x_k)}
		,
		&
		m_k &\coloneqq \mnorm{\mu_k} > 0
		,
		\\
		\hat h_k &\coloneqq
		((1-\lambda_k) \delta_x + \lambda_k \delta_{x_{k-1}} ) m_k - \mu_k
		,
		&
		t_k &\coloneqq \blnorm{\hat h_k}
		,
		&
		h_k &\coloneqq \hat h_k / t_k
		.
	\end{align*}
	We want to give estimates for $t_k$.
	For deriving a lower bound, we use the definition \eqref{eq:BL}
	with
	$\varphi(\cdot) \coloneqq \abs{\cdot - x_k}$.
	Note that $\lnorm{\varphi} = 1$.
	Together with
	$ \abs{x_{k-1} - x_k} \ge s_{k-1} - s_k \ge s_k $
	we arrive at
	\begin{equation*}
		t_k
		=
		\blnorm{\hat h_k}
		\ge
		\dual{\varphi}{\hat h_k}
		\ge
		\parens*{
			(1-\lambda_k) \abs{x - x_k}
			+
			\lambda_k \abs{x_{k-1} - x_k}
			-
			r_k
		}
		m_k
		\ge
		\parens*{1 - \frac{1}{k}} s_k m_k
		\ge
		\frac12 s_k m_k
	\end{equation*}
	for all $k \ge 2$.
	For the upper bound, we use
	\eqref{eq:nice_identity}
	with $\mu = (\hat h_k)_+$, $\nu = (\hat h_k)_-$.
	Together with \cref{def:Wasserstein}
	and the coupling
	$\gamma = (\mu \otimes \nu) / m_k$,
	we arrive at
	\begin{align*}
		t_k
		\le
		W_1(\mu, \nu)
		\le
		\int_{\Omega \times \Omega} \abs{y - z} \de \gamma(y,z)
		&\le
		\parens*{
			(1-\lambda_k) \parens*{ \abs{x - x_k} + r_k }
			+
			\lambda_k \parens*{ \abs{x_{k-1} - x_k} + r_k }
		}
		m_k
		.
	\end{align*}
	In particular, $t_k \searrow 0$.

	By construction of $\hat h_k$, we get
	\begin{equation*}
		G(\eqclass{\Bar{u}+t_k h_k}{0})
		=
		G(\eqclass{\Bar{u}+\hat h_k}{0})
		=
		G(\eqclass{\Bar{u}}{0})
	\end{equation*}
	and from $x, x_{k-1} \in \supp(\bar u_+)$,
	we get
	$\bar p(x) = \bar p(x_{k-1}) = \alpha$
	and, consequently,
	\begin{equation*}
		\ddual{\bar p}{ \eqclass{\hat h_k}{0} } = 0.
	\end{equation*}
	In order to check that
	\eqref{eq:NDCmeas}
	does not hold,
	it remains to check that
	$\eqclass{h_k}{0} \weakstar 0$ in $\Cod$.
	Let $\varphi \in \Co$ be arbitrary.
	We have
	\begin{align*}
		\ddual{\varphi}{\eqclass{h_k}{0}}
		&=
		\frac{
			\parens*{
				(1-\lambda_k) \varphi(x) + \lambda_k \varphi(x_{k-1})
			} m_k
			- \int_{B_{r_k}(x_k)} \varphi \de\mu_k
		}{t_k}
		.
	\end{align*}
	From the Lipschitz continuity of $\varphi$,
	see \cref{lem:LUC_implies_C1_embeds_Lip},
	we
	get the estimate
	\begin{equation*}
		\abs*{
			\varphi(x_k) m_k
			-
			\int_{B_{r_k}(x_k)} \varphi \de\mu_k
		}
		\le
		r_k m_k \lnorm{\varphi}
		.
	\end{equation*}
	Consequently,
	\begin{equation*}
		\abs{\ddual{\varphi}{\eqclass{h_k}{0}}}
		\le
		\abs*{
			\frac{
				\parens*{
					(1-\lambda_k) \varphi(x) + \lambda_k \varphi(x_{k-1})
				} m_k
				- \varphi(x_k) m_k
			}{t_k}
		}
		+
		\frac{r_k m_k}{t_k} \lnorm{\varphi}
		.
	\end{equation*}
	Due to the choice of $r_k$
	and the lower bound on $t_k$,
	the last term goes to zero
	and, for brevity, we will replace it by $o(1)$.
	Together with the lower bound for $t_k$, we get
	\begin{align*}
		\frac12
		\abs{\ddual{\varphi}{\eqclass{h_k}{0}}}
		&\le
		\abs*{\frac{
				(1-\lambda_k) \varphi(x) + \lambda_k \varphi(x_{k-1})
				- \varphi(x_k)
			}{
				s_k
		}}
		+ o(1)
		.
	\end{align*}
	Owing to \cref{lem:quasiconvex_taylor},
	we get
	\begin{equation*}
		\abs*{
			\varphi(x_k) - \varphi(x) - \nabla \varphi(x)^\top (x_k - x)
		}
		\le
		\eta_k \abs{x_k - x}
		=
		\eta_k s_k
	\end{equation*}
	for some sequence $\seq{\eta_k} \subset [0,\infty)$
	with $\eta_k \searrow 0$.
	Consequently,
	\begin{equation*}
		\frac12
		\abs{\ddual{\varphi}{\eqclass{h_k}{0}}}
		\le
		\abs*{\frac{
				\nabla\varphi(x)^\top\parens*{\lambda_k (x_{k-1} - x) - (x_k - x)}
			}{
				s_k
		}}
		+
		\frac{
			\lambda_k \eta_{k-1} s_{k-1}
			+
			\eta_k s_k
		}{
			s_k
		}
		+ o(1)
		.
	\end{equation*}
	Now, we insert $\lambda_k = s_k / s_{k-1}$
	and get
	\begin{equation*}
		\frac12
		\abs{\ddual{\varphi}{\eqclass{h_k}{0}}}
		\le
		\abs{\nabla\varphi(x)}
		\abs*{
			\frac{x_{k-1} - x}{s_{k-1}}
			-
			\frac{x_k - x}{s_k}
		}
		+
		\eta_{k-1}
		+
		\eta_k
		+ o(1)
		\to
		0
		.
	\end{equation*}
	This shows
	$\eqclass{h_k}{0} \weakstar 0$.
	Consequently,
	\eqref{eq:NDCmeas} is violated.
\end{proof}

\section{Structural assumptions imply SSC} \label{sec:impliesSSC}
In this section, we
prove that the structural assumption
$\ref{point:struc}$
implies
the second-order condition~\eqref{eq:SSCmeasures}.
To this end, we explicitly characterize the weak* second subderivative of~$G$ at~$\eqclass{\Bar{u}}{0}$ for~$\Bar{p}$,
\begin{align*}
    G''(\eqclass{\Bar{u}}{0},\Bar{p};h)
    =
    \inf \set*{
        \liminf_{k \rightarrow \infty} \frac{G(\eqclass{\Bar{u}}{0}+t_k h_k)-G(\eqclass{\Bar{u}}{0})-t_k \ddual{ \Bar{p}}{h} }{t^2_k/2}
        \given
        t_k \searrow 0,~h_k \weakstar h
    },
\end{align*}
in all directions~$h \in \Cod$. For this purpose, and to avoid working with the weak* topology on~$\Cod$, we argue similarly to~\cite{wachsmuth2} and start by showing that the preconjugate~$H$, see \cref{lem:propofliftG},
is twice strong-strong epi-subdifferentiable. A candidate for~$G''(\eqclass{\Bar{u}}{0},\Bar{p};h)$ is then found by applying \cref{lem:conjugatelowergen}.
\subsection{Twice strong-strong epi-differentiability of~$H$} \label{subsec:epiofH}
We recall that the functional $H$ on $\Co$
is the indicator function of $\set{\phi \in \Co \given \cnorm{\phi} \le \alpha}$,
see \cref{lem:propofliftG}.
We start by proving an auxiliary result
concerning the tangent cone of this set.
\begin{lemma} \label{lem:tangential}
	We set
  $\mathcal{S} \coloneqq \set{\phi \in \Co \given \cnorm{\phi} \le \alpha}$.
	For every $p \in \mathcal{S}$,
	the tangent cone of $\mathcal{S}$ at $p$
	(in the sense of convex analysis)
	is given by
	\begin{equation*}
		\TangentCone_{\mathcal{S}}(p)
		=
		\set*{
			z \in \Co
			\given
			z \le 0 \text{ on } \set{p = \alpha}
			,\;
			z \ge 0 \text{ on } \set{p = -\alpha}
		}.
	\end{equation*}
\end{lemma}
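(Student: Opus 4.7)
The two inclusions are handled separately. For the easy direction ``$\subseteq$'', I would use the convex-analytic characterization $\TangentCone_{\mathcal{S}}(p) = \overline{\bigcup_{t>0} t^{-1}(\mathcal{S} - p)}$ in $\Co$. Any $z$ in this cone is the $\Co$-limit (hence pointwise limit) of directions of the form $(q_n - p)/t_n$ with $q_n \in \mathcal{S}$, $t_n > 0$. At a point $x$ with $p(x) = \alpha$, the constraint $q_n(x) \le \alpha = p(x)$ forces each such direction to be $\le 0$ at $x$, so $z(x) \le 0$ in the limit. The sign condition on $\set{p = -\alpha}$ follows symmetrically.

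The substantive direction is ``$\supseteq$''. Given $z \in \Co$ with $z \le 0$ on $\set{p = \alpha}$ and $z \ge 0$ on $\set{p = -\alpha}$, my plan is to build a sequence $\seq{z_n} \subset \Co$ converging to $z$ together with scalars $t_n > 0$ such that $p + t_n z_n \in \mathcal{S}$; then $z_n = (p + t_n z_n - p)/t_n$ belongs to $t_n^{-1}(\mathcal{S} - p)$, and the $\Co$-closure places $z$ in $\TangentCone_{\mathcal{S}}(p)$. The idea is to inject a strict sign improvement at the active sets by perturbing in the direction $w := -p \in \Co$, which satisfies $w = \mp\alpha$ on $\set{p = \pm\alpha}$. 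Setting $z_n := z + \tfrac{1}{n} w$ yields $z_n \to z$ in $\Co$, and at every $x \in \set{p = \alpha}$ one has $z_n(x) \le -\alpha/n < 0$, with the analogous strict positivity on $\set{p = -\alpha}$.

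It remains to verify that $\cnorm{p + t_n z_n} \le \alpha$ for some $t_n > 0$. I would split $\Omega$ into three regions via small open neighborhoods $U_\pm$ of the compact sets $\set{p = \pm\alpha}$ (which are disjoint since $\alpha > 0$) and their compact complement $\Omega \setminus (U_+ \cup U_-)$. Chosen small enough, continuity of $z_n$ makes $z_n$ uniformly strictly negative on $U_+$, whence $p + t z_n \le p \le \alpha$ there; the bound $p + t z_n \ge -\alpha$ is automatic on $U_+$ for small $t$ since $p$ is close to $\alpha > 0$. A symmetric argument handles $U_-$. On the complement, compactness gives $\abs{p} \le \alpha - \eta$ for some $\eta > 0$, so $\abs{p + t z_n} \le \alpha - \eta + t \cnorm{z_n} \le \alpha$ for $t \le \eta/\cnorm{z_n}$.

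The main technical obstacle is coordinating these three regional estimates so that a single $t_n > 0$ works for all of them simultaneously. This rests on the compactness of $\Omega$ and of the closed level sets $\set{p = \pm\alpha}$, on their disjointness, and on continuity of $z$ and $p$, which together let the $U_\pm$ be chosen small enough that $z_n$ has the required strict sign on them while their complement remains uniformly separated from $\set{\abs{p} = \alpha}$. Once $n$ is fixed and $U_\pm$ are chosen, the three thresholds on $t$ are strictly positive, so any $t_n$ below their minimum delivers the desired element of $\mathcal{S}$ and completes the proof.
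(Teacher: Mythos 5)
Your proof is correct, and it takes a genuinely different route from the paper's for the nontrivial inclusion ``$\supseteq$''. Both arguments produce a sequence of directions converging to $z$ in $\Co$ that are feasible at some scale, but the correction mechanisms differ. The paper quantifies the infeasibility of $z$ through the optimal values $\bar d_{\pm,t}$, shows $\bar d_{\pm,t}\to\bar d_\pm\le 0$ as $t\searrow 0$, and corrects $z$ by $\xi_t=-d_{+,t}\varphi_++d_{-,t}\varphi_-$ where $\varphi_\pm\in\Co$ are Urysohn functions with disjoint supports equal to one on $\set{\pm p\ge\alpha/2}$; the corrected direction $z+\xi_t$ is then feasible for that same $t$, and $\xi_t\to 0$ in $\Co$. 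You instead perturb globally by $-p/n$, exploiting the fact that $-p$ already lies in $\Co$ and takes the favorable values $\mp\alpha$ on the active sets $\set{p=\pm\alpha}$; this forces $z_n=z-p/n$ to have a uniform strict sign on small neighborhoods $U_\pm$ of the active sets, and the three-region estimate (on $U_+$, $U_-$, and the compact complement, where $\abs{p}\le\alpha-\eta$) gives, for each fixed $n$, a single admissible scale $t_n>0$. Your route avoids the Urysohn-function construction and decouples the correction from the step size, at the cost of letting $t_n$ depend on the choice of $U_\pm$ (hence on $n$); the paper's route, conversely, supplies a one-parameter family $(z+\xi_t)_{t>0}$ in which the correction magnitude $d_{\pm,t}$ is tied directly to $t$. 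Both are complete.
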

\begin{proof}
  The inclusion ``$\subset$'' is clear and we just have to check ``$\supset$''.
  Thus, let $z$ from the right-hand side be given.
  For~$t>0$ we define the optimal values
  \begin{equation*}
    \bar{d}_{\pm} \coloneqq \max\set*{\pm z(x) \given x \in \Omega, \; p(x)=\pm\alpha}
    ,\qquad
    \bar{d}_{\pm,t} \coloneqq \max\set*{ t^{-1}(\pm p(x)-\alpha) \pm z(x) \given x \in \Omega }.
  \end{equation*}
  By assumption, there holds~$\Bar{d}_\pm\leq 0$.
  We readily verify~$\Bar{d}_{\pm,t} \rightarrow \Bar{d}  $
  and thus~$d_{\pm,t} \coloneqq \max\{0,\Bar{d}_{\pm,t}\} \rightarrow 0$ as~$t \rightarrow 0$.
  By construction, we further have
  \begin{equation*}
    t^{-1}(\pm p(x)-\alpha) \pm z(x)
    \le
    \bar d_{\pm, t}
    \le
    d_{\pm, t}
    \qquad\forall x \in \Omega.
  \end{equation*}
  Rearranging, this implies
  \begin{equation}
    \label{eq:nice_sign}
    \mathopen{} 
    \pm( p(x) + t z(x) \mp t d_{\pm,t} )
    \leq \alpha \qquad\forall x \in \Omega.
  \end{equation}

  Due to the continuity of $p$,
  the sets $\set{p \le -\alpha/2} = \set{-p \ge \alpha/2}$ and $\set{p \ge \alpha/2}$
  have a positive distance.
  Thus, we can find $\varphi_{\pm} \in \Co$
  with disjoint support,
  $0 \le \varphi_{\pm} \le 1$
  and $\varphi_{\pm} = 1$ on $\set{\pm p \ge \alpha/2}$.

  Now, we consider
  \begin{equation}
    \label{eq:for_the_tangent_cone}
    \xi_t \coloneqq - d_{+,t} \varphi_+ + d_{-,t} \varphi_-
    .
  \end{equation}
  The convergence of~$\xi_t$ towards~$0$ in~$\Co$ follows immediately
  and it remains to check $p + t (z + \xi_t) \in \mathcal{S}$ for $t > 0$ small enough.
  For $x \in \set{p \ge \alpha/2}$,
  we have
  \begin{equation*}
    (p + t (z + \xi_t))(x)
    =
    p(x) + t z(x) - t d_{+,t}
    \le
    \alpha
  \end{equation*}
  due to \eqref{eq:nice_sign}.
  For $x \not\in \set{p \ge \alpha/2}$,
  we get
  \begin{equation*}
    (p + t (z + \xi_t))(x)
    \le
    \frac\alpha2 + t \cnorm{z + \xi_t}
    \le
    \alpha
  \end{equation*}
  for $t > 0$ small enough.
  Similarly, we can handle the lower bound and this finishes the proof.
\end{proof}
It is interesting to note that tangent cone in $\Co$
does not see the derivative of the direction $z$
on the sets $\set{p = \pm \alpha}$.

In order to prepare the computation of the strong second subderivative of $H$,
we give an auxiliary lemma.
\begin{lemma}
	\label{lem:ersatz_fuer_lemma_sieben_punkt_sechs}
	We assume that $\bar u \in \M$
	is of the form \eqref{eq:sparsemeasSSC}.
	Further,
	$\bar p \in \Co \cap \alpha \partial  \mnorm{\bar{u}}$
	is given such that \eqref{eq:def_hes_p}
	holds,
	where $\A$ is defined in \eqref{eq:active_sets}.

	Then,
	there exists $r_0 > 0$ such that the following holds.
  \begin{enumerate}[label=(\arabic*)]
		\item\label{lem:ersatz_fuer_lemma_sieben_punkt_sechs:1}
			The balls
			$B_{r_0}(\bar x_j)$, $j = 1,\ldots, N$, are pairwise disjoint and subsets of $\Omega$.
		\item\label{lem:ersatz_fuer_lemma_sieben_punkt_sechs:2}
			For all $j \in \set{1,\ldots,N}$
			we have
			\begin{equation}
				\label{eq:hess_p_psd_locally_around_bar_x}
				\frac\alpha2
				\le
				s_j \bar p(x)
				\le
				\alpha - \frac\theta4 \abs{x - \bar x_j}^2
				\qquad
				\forall x \in B_{r_0}(\bar x_j)
				.
			\end{equation}
	\end{enumerate}
\end{lemma}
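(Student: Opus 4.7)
The proof will be a short argument based on Taylor expansion and the finite active-set structure.

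First, for statement \ref{lem:ersatz_fuer_lemma_sieben_punkt_sechs:1}, recall that the points $\bar x_1,\ldots,\bar x_N$ in the representation \eqref{eq:sparsemeasSSC} are finitely many distinct points of $\Intr(\Omega)$. Hence the quantity
\begin{equation*}
  \rho \coloneqq \min\set[\big]{\operatorname{dist}(\bar x_j,\partial\Omega) \given j = 1,\ldots,N} \cup \set[\big]{\tfrac12\abs{\bar x_i - \bar x_j} \given i\neq j}
\end{equation*}
is strictly positive. Any $r_0 \in (0,\rho)$ then makes the balls $B_{r_0}(\bar x_j)$ pairwise disjoint and contained in $\Omega$.

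For statement \ref{lem:ersatz_fuer_lemma_sieben_punkt_sechs:2}, I will perform a local second-order Taylor expansion of $\bar p$ at each $\bar x_j$. Since $\bar p \in \alpha\partial\mnorm{\bar u}$, we have $\abs{\bar p} \le \alpha$ on $\Omega$ and $\bar p(\bar x_j) = s_j\alpha$, so $\bar x_j \in \Intr(\Omega)$ is an interior extremum of $s_j \bar p$; Fermat's theorem gives $\nabla \bar p(\bar x_j) = 0$. Using the $C^2$ regularity of $\bar p$ near $\bar x_j$ assumed in \cref{thm:main}, we get
\begin{equation*}
  s_j \bar p(x) = \alpha + \tfrac12 s_j \nabla^2 \bar p(\bar x_j)(x-\bar x_j)^2 + R_j(x)
\end{equation*}
with $\abs{R_j(x)} / \abs{x-\bar x_j}^2 \to 0$ as $x \to \bar x_j$. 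Invoking the curvature assumption \eqref{eq:def_hes_p}, the quadratic term is bounded above by $-\tfrac\theta2 \abs{x-\bar x_j}^2$. Choosing $r_j > 0$ so small that $\abs{R_j(x)} \le \tfrac\theta4 \abs{x-\bar x_j}^2$ on $B_{r_j}(\bar x_j)$ yields the desired upper bound $s_j \bar p(x) \le \alpha - \tfrac\theta4 \abs{x-\bar x_j}^2$.

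The lower bound $s_j \bar p(x) \ge \alpha/2$ follows from the continuity of $\bar p$ at $\bar x_j$ by possibly shrinking $r_j$ further. Finally, taking
\begin{equation*}
  r_0 \coloneqq \min\set[\big]{\rho,\; r_1,\ldots,r_N}
\end{equation*}
produces a single radius that simultaneously realises \ref{lem:ersatz_fuer_lemma_sieben_punkt_sechs:1} and \ref{lem:ersatz_fuer_lemma_sieben_punkt_sechs:2}. There is no real obstacle here; the only point to be a little careful about is justifying $\nabla\bar p(\bar x_j) = 0$ from the interior-extremum property $\bar x_j \in \Intr(\Omega)$, and justifying $\bar p(\bar x_j) = s_j\alpha$ from the characterisation \eqref{eq:firstorderintro} of the subdifferential support, both of which are immediate from the hypotheses carried into the lemma.
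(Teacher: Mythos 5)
Your proof is correct and follows essentially the same route as the paper's (which is deliberately terse): part (1) by taking $r_0$ below the minimum of the pairwise half-distances and the distances to $\partial\Omega$, and part (2) by the second-order Taylor expansion at $\bar x_j$ using $\bar p(\bar x_j) = s_j\alpha$, $\nabla\bar p(\bar x_j) = 0$ from the interior-extremum property, and the curvature bound \eqref{eq:def_hes_p} to absorb the $o(\abs{x-\bar x_j}^2)$ remainder into the $\theta/4$ coefficient. Your write-up just makes explicit the details the paper leaves implicit (Fermat's theorem for the vanishing gradient, the continuity-based lower bound, shrinking $r_j$ to control the remainder).
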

\begin{proof}
	It is clear that
	\ref{lem:ersatz_fuer_lemma_sieben_punkt_sechs:1}
	holds for $r_0 > 0$ small enough.
	In order to verify
	\ref{lem:ersatz_fuer_lemma_sieben_punkt_sechs:2}
	we can combine a Taylor expansion
	\begin{equation*}
		\bar p(x)
		=
		\bar p(\bar x_j)
		+
		\nabla\bar p(\bar x_j)^\top (x - \bar x_j)
		+
		\frac12 \nabla^2 \bar p(\bar x_j) (x - \bar x_j)^2
		+
		o(\abs{x - \bar x_j}^2)
	\end{equation*}
	with $\bar p(\bar x_j) = s_j \alpha$,
	$\nabla \bar p(\bar x_j) = 0$ and \eqref{eq:def_hes_p}.
\end{proof}

Now we are in position to give a precise characterization of~$H''(\Bar{p},\eqclass{\Bar{u}}{0};z)$.
\begin{proposition} \label{prop:equidiffofJ}
	Under the assumptions of \cref{lem:ersatz_fuer_lemma_sieben_punkt_sechs},
the functional~$H$ is strongly-strongly twice epi-differentiable at~$\bar{p}$ for~$\eqclass{\bar{u}}{0}$ with
\begin{equation}
    \label{eq:second_subderivative_H}
H''(\bar{p},\eqclass{\bar{u}}{0},z)
=
-\sum_{j = 1}\bar{\lambda}_j \nabla^2 \bar{p}(\bar{x}_j)^{-1}\nabla z(\bar{x}_j)^2
+
I_\mathcal{Z}(z)
\qquad\forall z \in \Co
\end{equation}
where
\begin{equation*}
    \mathcal{Z}= 
    \set*{
        z \in \Co
        \given
        z \le 0 \text{ on } \I_+
        ,\;
        z \ge 0 \text{ on } \I_-
        ,\;
        z(\bar x_j) = 0 \;\forall j = 1,\ldots,N
    }.
\end{equation*}
\end{proposition}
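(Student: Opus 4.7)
The plan is to prove the two inequalities underpinning strong-strong twice epi-differentiability: a $\liminf$ estimate $H''(\bar p, \eqclass{\bar u}{0}; z) \ge \text{RHS}$ for every $z \in \Co$, and the construction, for each $z \in \mathcal{Z}$ and each $t_k \searrow 0$, of a strongly convergent recovery sequence $z_k \to z$ in $\Co$ realizing equality; for $z \notin \mathcal{Z}$ the RHS equals $+\infty$, so any sequence works for the recovery part. The key simplification is that $H = I_{\mathcal S}$ with $\mathcal S = \set{\phi \in \Co : \cnorm{\phi} \le \alpha}$, so whenever $\bar p + t_k z_k \in \mathcal S$ the difference quotient reduces to $-\frac{2}{t_k}\sum_j \bar\lambda_j z_k(\bar x_j)$, and is $+\infty$ otherwise.

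For the $\liminf$ bound, assume $\bar p + t_k z_k \in \mathcal S$ and $z_k \to z$ in $\Co$. Passing to the limit on $\I_\pm$ and at $\bar x_j$ yields $z \le 0$ on $\I_+$, $z \ge 0$ on $\I_-$, and $s_j z_k(\bar x_j) \le 0$ for $k$ large. The core pointwise estimate comes from combining the quadratic Taylor expansion of $\bar p$ near $\bar x_j$ (using $\nabla \bar p(\bar x_j) = 0$ and the uniform bound $s_j \nabla^2 \bar p(\bar x_j) \le_L -\theta \operatorname{Id}$ from \cref{lem:ersatz_fuer_lemma_sieben_punkt_sechs}) with the first-order Taylor expansion of $z_k$, which has a modulus of continuity uniform in $k$ thanks to $\nabla z_k \to \nabla z$ uniformly. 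Testing the resulting inequality at $y^* = t_k s_j M_j^{-1} \nabla z_k(\bar x_j)$ with $M_j := -s_j \nabla^2 \bar p(\bar x_j) \succeq \theta \operatorname{Id}$ produces $s_j z_k(\bar x_j) \le -\tfrac{1}{2} t_k M_j^{-1} \nabla z_k(\bar x_j)^2 + o(t_k)$. Multiplying by $|\bar\lambda_j|$ and using the identity $|\bar\lambda_j| M_j^{-1} = -\bar\lambda_j \nabla^2 \bar p(\bar x_j)^{-1}$, then summing over $j$ with all terms non-negative, gives the stated $\liminf$ lower bound. Finiteness of this limit together with $s_j z_k(\bar x_j) \le 0$ forces $z(\bar x_j) = 0$, completing $z \in \mathcal Z$.

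For the recovery sequence with $z \in \mathcal Z$, set $m_j := \tfrac{1}{2} \nabla^2 \bar p(\bar x_j)^{-1} \nabla z(\bar x_j)^2$ and choose disjoint smooth cutoffs $\chi_j \in \Co$ with $\chi_j \equiv 1$ on a neighborhood of $\bar x_j$ and $\supp \chi_j$ inside a small ball; by \cref{lem:ersatz_fuer_lemma_sieben_punkt_sechs}, these can further be chosen disjoint from $\I_+ \cup \I_-$. The natural candidate $\tilde z_k := z + t_k \sum_j m_j \chi_j$ converges to $z$ in $\Co$ and satisfies $\tilde z_k(\bar x_j) = t_k m_j$; the precise choice of $m_j$ makes the leading part of $s_j(\bar p + t_k \tilde z_k - s_j\alpha)$ near $\bar x_j$ equal to the nonpositive form $-\tfrac{1}{2} M_j(y - y^*)^2$, which is exactly the equality case of the $\liminf$ analysis. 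The residual Taylor remainders $o(|y|^2) + t_k o(|y|)$ can still produce an $o(t_k^2)$ violation near each $\bar x_j$, and $z$ itself may only barely satisfy the constraints near $\I_\pm$. These defects are absorbed by adding a further correction $\eta_k = -\sum_j s_j \mu_{k,j} \chi_j - d_{+,k}\varphi_+ + d_{-,k}\varphi_-$, modelled on the tangent-cone construction in the proof of \cref{lem:tangential}, with cutoffs $\varphi_\pm$ supported near $\I_\pm$ and disjoint from the $\chi_j$'s, and scalars $\mu_{k,j}, d_{\pm,k} \to 0$ chosen just large enough to restore feasibility. The quantitative point is that the $o(t_k^2)$ violation near each $\bar x_j$ allows $\mu_{k,j} = o(t_k)$, while $\nabla \chi_j(\bar x_j) = 0$ guarantees $\nabla z_k(\bar x_j) = \nabla z(\bar x_j)$ exactly. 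Setting $z_k := \tilde z_k + \eta_k$ then yields $z_k \to z$ in $\Co$, $\bar p + t_k z_k \in \mathcal S$, and $z_k(\bar x_j)/t_k = m_j - s_j \mu_{k,j}/t_k \to m_j$, so the Newton quotient tends to $-2\sum_j \bar\lambda_j m_j = -\sum_j \bar\lambda_j \nabla^2 \bar p(\bar x_j)^{-1} \nabla z(\bar x_j)^2$, as required.

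The main obstacle is the coordination of scales in the recovery sequence: $\mu_{k,j}$ must be large enough to absorb the $o(t_k^2)$ second-order remainders from the Taylor expansion at $\bar x_j$, yet $o(t_k)$ so that the limit $z_k(\bar x_j)/t_k \to m_j$ is preserved, while $d_{\pm,k}$ must dominate any first-order violation near $\I_\pm$ and simultaneously vanish in $\Co$. The delicate balance relies on the regularity $\bar p \in C^2$ near $\A$ furnished by the hypothesis and on the $\Co$-convergence $z_k \to z$, which makes all the relevant Taylor-remainder moduli uniform in $k$.
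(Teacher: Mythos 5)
Your proposal follows essentially the same route as the paper's proof. Step~1 (showing $H''=\infty$ off $\mathcal Z$) is the paper's tangent‐cone argument phrased via passing to the limit in the feasibility constraint; Step~2 (the $\liminf$ bound) is identical — your test point $y^\ast = t_k s_j M_j^{-1}\nabla z_k(\bar x_j) = -t_k \nabla^2\bar p(\bar x_j)^{-1}\nabla z_k(\bar x_j)$ is exactly the paper's perturbation $\xi_{j,k}$, and completing the square $-\tfrac12 M_j(y-y^\ast)^2$ is the same computation; Step~3 (the recovery sequence) uses the same building blocks, namely constant shifts $\approx t_k m_j$ multiplied by cutoffs $\chi_j$ near each $\bar x_j$ and the tangent‐cone correction $-d_{+,k}\varphi_+ + d_{-,k}\varphi_-$ from the proof of \cref{lem:tangential} near $\I_\pm$. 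The only place the paper is more explicit than your sketch is in the feasibility verification inside $B_{r_0}(\bar x_j)$: it splits the ball at the $k$‐dependent radius $r_k=\sqrt{4 t_k \cnorm z/\theta}$ and derives a precise shift $\tilde\xi_{j,k}= t_k m_j + O(\varepsilon_k t_k)$ rather than asserting the existence of a suitable $\mu_{k,j}=o(t_k)$, but this is a quantitative elaboration of exactly what you outline, not a different idea.
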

\begin{proof}
Note that~$\bar{p} \in \mathbb{B}_\alpha$ and thus~$H(\bar{p})=0$. For abbreviation, given~$t_k>0$ and~$z_k \in\Co$, set 
\begin{align*}
D(t_k,z_k) \coloneqq \frac{H(\bar p+t_k z_k)-t_k \ddual{z_k}{\eqclass{\bar{u}}{0}}}{t^2_k/2}=\frac{H(\bar p+t_k z_k)-t_k\sum^N_{j=1} \bar{\lambda}_j z_k(\bar{x}_j)}{t^2_k/2}.
\end{align*}
Thus,
\begin{equation*}
    H''(\bar{p},\eqclass{\Bar{u}}{0};z)
    =
    \inf \set*{
        \liminf_{k \rightarrow \infty} D(t_k,z_k)
        \given
        t_k \searrow 0,~z_k \rightarrow z
    }
    \qquad\forall z \in \Co.
\end{equation*}
The proof is divided into several steps.

\textit{Step 1}:
We start by showing that~$H''(\bar{p},\eqclass{\bar{u}}{0};z)=\infty$ if~$z \not \in \mathcal{Z}$.
For this purpose, we first note that
\begin{align*}
    \mathcal{Z} = 
    \TangentCone_{\mathcal{S}}(\bar p)
    \cap
    \set*{
        z \in \Co
        \given
        z(\bar x_j) = 0 \;\forall j = 1,\ldots,N
    },
\end{align*}
cf.\ \cref{lem:tangential}.
In case $z \not\in \TangentCone_{\mathcal{S}}(\bar p)$,
for all sequences $\seq{t_k} \subset (0,\infty)$
and $\seq{z_k} \subset \Co$ with $t_k \searrow 0$ and $z_k \to z$ in $\Co$,
we have $\bar p + t_k z_k \not\in \mathcal{S}$ for all $k$ large enough.
Consequently,
$H(\bar p + t_k z_k) = \infty$
for all $k$ large enough
and, therefore,
$H''(p,\eqclass{\Bar{u}}{0};z) = \infty$
for all $z \not \in \TangentCone_{\mathcal{S}}(\bar p)$.

Now, for~$z \in \TangentCone_{\mathcal{S}}(\bar p) \setminus \mathcal{Z}$
there exists an index~$\bar{\jmath} \in \set{1,\dots,N}$
with~$z(\bar{x}_{\bar{\jmath}}) \neq 0$. This implies
$\sgn(\bar p(\bar x_{\bar\jmath})) z(\bar x_{\bar\jmath}) < 0$
and, thus,
$\bar\lambda_{\bar\jmath} z(\bar x_{\bar\jmath}) < 0$.
Moreover,
$\bar\lambda_{j} z(\bar x_{j}) \le 0$ for all $j \in \set{1,\ldots,N}$.
For all sequences $\seq{t_k} \subset (0,\infty)$ and $\seq{z_k} \subset \Co$
with $t_k \searrow 0$ and $z_k \to z$ in $\Co$, we have
\begin{equation*}
    \liminf_{k \rightarrow \infty} D(t_k,z_k)
    \ge
    \liminf_{k \rightarrow \infty} \frac{- \sum^N_{j=1}\bar{\lambda}_j z_k(\bar{x}_j) }{t^2_k/2}
    \ge
    \liminf_{k \rightarrow \infty} \frac{- \bar{\lambda}_{\bar{\jmath}} z_k(\bar{x}_{\bar{\jmath}})  }{t^2_k/2}
    =
    \infty.
\end{equation*}
Thus,
$H''(\bar{p},\eqclass{\Bar{u}}{0};z) = \infty$.

Summarizing the previous observations, we conclude~$H''(\bar{p},\eqclass{\bar{u}}{0};z)=\infty$ if~$z \not \in \mathcal{Z}$.

\textit{Step 2}: Next, we will show that 
$H''(\bar{p},\eqclass{\bar{u}}{0};z)$
can be bounded from below
by the right-hand side of \eqref{eq:second_subderivative_H}
for all~$z \in \mathcal{Z}$.
Let sequences $\seq{t_k} \subset (0,\infty)$ and $\seq{z_k} \subset \Co$
with
$t_k \searrow 0$ and $z_k \to z$ in $\Co$ be given.
We have to show that $\liminf_{k \to \infty} D(t_k, z_k)$
can be bounded from below by the right-hand side of \eqref{eq:second_subderivative_H}.
It is clear that we only have to consider sequences with
\begin{equation*}
    \bar{p}+t_k z_k \in \mathbb{B}_\alpha \qquad\forall k \in \N
\end{equation*}
in the following.
We have
\begin{equation}
    \label{eq:random_liminf}
    \liminf_{k \rightarrow \infty}D(t_k,z_k)
    =
    \liminf_{k \rightarrow \infty}\frac{ - \sum^N_{j=1}\bar{\lambda}_j t_k z(\bar{x}_{j}) }{t^2_k/2}
    \geq
    2 \sum_{j = 1}^N \liminf_{k \rightarrow \infty}\frac{ - \bar{\lambda}_j z(\bar{x}_{j}) }{t_k}.
\end{equation}
For all~$j \in \set{1,\ldots,N}$ we define the perturbation
\begin{equation*}
	\xi_{j,k} =- t_k\nabla^2 \bar{p}(\bar{x}_j)^{-1} \nabla z_k(\bar{x}_j).
\end{equation*}
Note that $\abs{\xi_{j,k}} = O(t_k)$ as $k \to \infty$.
Since~$\bar{x}_j \in \Intr(\Omega)$ there holds~$\bar{x}_j + \xi_{j,k} \in \Omega $
for $k$ sufficiently large.
Using the feasibility $\bar p + t_k z_k \in \mathbb{B}_\alpha$, we get
\begin{align} \label{eq:perteq}
	\alpha \geq s_j ( \bar{p}(\bar{x}_j+\xi_{j,k})+ t_k z_k(\bar{x}+\xi_{j,k}))
\end{align}
for all sufficiently large~$k$.
By Taylor expansions of~$\bar{p}$ and~$z_k$, respectively, at~$\bar{x}_j$, we get
\begin{align*}
	\MoveEqLeft
	\bar{p}(\bar{x}_j+\xi_{j,k})+ t_k z_k(\bar{x}_j+\xi_{j,k}) \\
	&=
	\bar{p}(\bar{x}_j)
	+ t_k z_k(\bar{x}_j)+ \nabla \bar{p}(\bar{x}_j)^\top \xi_{j,k}
	+ t_k \nabla z_k(\bar x_j)^\top \xi_{j,k}
	+\frac{1}{2} \nabla^2 \bar{p}(\bar x_j)\xi_{j,k}^2
	+ o(t_k^2)
	\\ &=
	s_j \alpha
	+ t_k z_k(\bar{x}_j)
	-\frac{t_k^2}{2} \nabla^2 \bar{p}(\bar x_j)^{-1} \nabla z_k(\bar x_j)^2
	+ o(t_k^2)
	.
\end{align*}
Plugging this into~\eqref{eq:perteq}, we arrive at
\begin{equation*}
\frac{ -\bar{\lambda}_j z_k(\bar{x}_j)}{t_k}
\geq
-\frac12 \bar{\lambda}_j \nabla^2 \bar{p}(\bar{x}_j)^{-1} \nabla z_k(\bar{x}_j)^2
+ o(1).
\end{equation*}
By combining this estimate with \eqref{eq:random_liminf}
and using~$z_k \rightarrow z $ in~$\Co$ yields the desired result.

	\textit{Step 3}:
	Finally, given~$z \in \mathcal{Z}$
	as well as an arbitrary sequence~$\seq{t_k} \subset (0,\infty)$ with $t_k \searrow 0$,
	we construct a sequence~$\seq{z_k} \subset \Co$ with~$z_k \rightarrow z$
	and which achieves the right-hand side of~\eqref{eq:second_subderivative_H}.

	By using the $\xi_k$ from \eqref{eq:for_the_tangent_cone}
	(with $t = t_k$),
	we get $\bar p + t_k(z + \xi_k) \in \mathbb{B}_\alpha$.
	However, we have to modify this function in the neighborhoods of $\bar x_j$
	in order to make the value of $\bar\lambda_j z_k(\bar x_j)$,
	which appears in $D(t_k, z_k)$,
	as large as possible.

	Let $r_0 > 0$ from \cref{lem:ersatz_fuer_lemma_sieben_punkt_sechs}
	be given.
	We define $r_k \coloneqq \sqrt{4 t_k \cnorm{z}/\theta}$.
	In the sequel, $k$ is large enough, such that $r_k \le r_0$.
	Now, we consider an arbitrary $j \in \set{1,\ldots,N}$ with $\bar p(\bar x_j) = \alpha$,
	i.e., $s_j = 1$.
	Combining the definition of $r_k$ with \eqref{eq:hess_p_psd_locally_around_bar_x},
	we get
	\begin{equation}
		\label{eq:random_2134887}
		(\bar p + t_k z)(x)
		\le
		\alpha - \frac \theta4 r_k^2 + t_k \cnorm{z}
		=
		\alpha
		\qquad
		\forall x \in B_{r_0}(\bar x_j) \setminus B_{r_k}(\bar x_j)
		.
	\end{equation}
	For all $x \in B_{r_k}(\bar x_j)$
	we have the Taylor estimates
	\begin{align*}
		\bar p(x) &\le \bar p(\bar x_j) + \nabla \bar p(\bar x_j)^\top (x - \bar x_j) + \frac12 \nabla^2 \bar p(\bar x_j) (x - \bar x_j)^2
		+ \frac{\varepsilon_k}2 \abs{x - \bar x_j}^2
		, \\
		z(x) &\le z(\bar x_j) + \nabla z(\bar x_j)^\top (x - \bar x_j) + \varepsilon_k \abs{x - \bar x_j}
	\end{align*}
	with $\varepsilon_k \searrow 0$.
	Multiplying the second inequality by $t_k$ and adding the first inequality,
	we obtain
	\begin{equation*}
		(\bar p + t_k z)(x) - \alpha
		\le
		\frac12 \nabla^2 \bar p(\bar x_j) (x - \bar x_j)^2
		+ t_k \nabla z(\bar x_j)^\top (x - \bar x_j)
		+ \varepsilon_k t_k \abs{x - \bar x_j}
		+ \frac{\varepsilon_k}2 \abs{x - \bar x_j}^2.
	\end{equation*}
	Using Young's inequality and \eqref{eq:def_hes_p}, we get
	\begin{align*}
		(\bar p + t_k z)(x) - \alpha
		&\le
		\frac12 \nabla^2 \bar p(\bar x_j) (x - \bar x_j)^2
		+ t_k \nabla z(\bar x_j)^\top (x - \bar x_j)
		+ \frac{\varepsilon_k}{2} t_k^2
		+ \varepsilon_k \abs{x - \bar x_j}^2
		\\&
		\le
		\frac{1 - 2 \varepsilon_k/\theta} 2 \nabla^2 \bar p(\bar x_j) (x - \bar x_j)^2
		+ t_k \nabla z(\bar x_j)^\top (x - \bar x_j)
		+ \frac{\varepsilon_k}{2} t_k^2
	\end{align*}
	for all $x \in B_{r_k}(\bar x_j)$.
	For $k$ large enough, $1 - 2 \varepsilon_k/\theta \ge 0$.
	Since $\nabla^2 \bar p(\bar x_j)$ is negative definite,
	all vectors $a,b \in \R^d$ obey
	the inequality
	$2 a^\top b \le -\nabla^2 \bar p(\bar x_j) a^2 - \nabla^2 \bar p(\bar x_j)^{-1} b^2$.
	Thus,
	\begin{equation*}
		(\bar p + t_k z)(x) - \alpha
		\le
		- \frac{t_k^2}{2(1 - 2 \varepsilon_k/\theta)} \nabla^2 \bar p(\bar x_j)^{-1} \nabla z(\bar x_j)^2
		+ \frac{\varepsilon_k}{2} t_k^2
		=: - t_k \tilde \xi_{j,k}
	\end{equation*}
	for all $x \in B_{r_k}(\bar x_j)$.
	Note that $\tilde \xi_{j,k} \le 0$.
	Together with \eqref{eq:random_2134887} we infer
	\begin{equation*}
		\forall x \in B_{r_0}(\bar x_j)
		:\quad
		(\bar p + t_k (z + \tilde \xi_{j,k}))(x) \le \alpha.
	\end{equation*}
	For $k$ large enough, we also get a lower bound from \eqref{eq:hess_p_psd_locally_around_bar_x}
	and this results in
	\begin{equation*}
		\forall x \in B_{r_0}(\bar x_j)
		:\quad
		\abs{ (\bar p + t_k (z + \tilde \xi_{j,k}))(x) } \le \alpha.
	\end{equation*}
	For $j \in \set{1,\ldots,N}$
	with $s_j = -1$, we can argue similarly and obtain
	the same estimate,
	with an appropriately defined $\tilde \xi_{j,k}$.

	In order to glue these estimates together,
	let~$\seq{\varphi_j}_{j=1}^N \subset \Co$ denote Urysohn functions
	with
	\begin{align*}
		\varphi_j(x) &\in [0,1] \quad \forall x \in \Omega,
		&
		\varphi_j(x) &=1 \quad \forall x \in B_{r_0/2} (\bar{x}_j),
		&
		\varphi_j(x) &=0 \quad \forall x \in \Omega \setminus B_{r_0} (\bar{x}_j) 
	\end{align*}
	for all $j = 1,\ldots, N$.
	We set
	\begin{equation*}
		z_k
		\coloneqq
		z
		+
		\xi_k
		+
		\sum_{j = 1}^N
		\varphi_j ( \tilde \xi_{j,k} - \xi_k)
		.
	\end{equation*}
	Note that $\bar p + t_k (z + \xi_k) \in \mathbb{B}_\alpha$
	already implies that $\abs{\bar p + t_k z_k} \le \alpha$
	outside of the balls $B_{r_0}(\bar x_j)$.
	For $j \in \set{1,\ldots,N}$,
	we get
	on the set $B_{r_0}(\bar x_j)$
	\begin{equation*}
		\bar p + t_k z_k
		=
		(1-\varphi_j) (\bar p + t_k ( z + \xi_k))
		+
		\varphi_j (\bar p + t_k (z + \tilde \xi_{j,k}))
		\in [-\alpha, \alpha].
	\end{equation*}
	Consequently,
	$ \bar p + t_k z_k \in \mathbb{B}_\alpha$,
	i.e., $H(\bar p + t_k z_k) = 0$.
	Further, $z_k \to z$ in $\Co$ is clear since $\xi_k \to 0$ and $\tilde \xi_{j,k} \to 0$.
	Finally,
	\begin{equation*}
		D(t_k, z_k)
		=
		-\sum_{j = 1}^N
		\frac{\bar\lambda_j z_k(\bar x_j)}{t_k/2}
		=
		-\sum_{j = 1}^N
		\frac{\bar\lambda_j \tilde \xi_{j,k}}{t_k/2}
		\to
		-\sum_{j = 1}^N
		\bar{\lambda}_j \nabla^2 \bar{p}(\bar{x}_j)^{-1}\nabla z(\bar{x}_j)^2
		.
	\end{equation*}
	Hence, $\seq{z_k} \subset \Co$ is the desired recovery sequence.
\end{proof}

We remark that expressions similar to \eqref{eq:second_subderivative_H}
arise
when one studies second-order tangent sets (w.r.t.\ the norm of $\C$)
to the set of non-negative functions in $\C$,
see \cite[Example~4.152]{BonnansShapiro2000}.
This is quite surprising,
since we have worked in $\Co$ in order to arrive at \eqref{eq:second_subderivative_H}.

\subsection{Twice weak* epi-differentiability of~$G$} \label{subsec:diffofG}
Now, we are in position to characterize the weak* second subderivative of~$G$.
For this purpose, we exploit the derived  representation of~$H''(\bar{p},\eqclass{\bar{u}}{0};\cdot)$
as well as \cref{lem:conjugatelowergen}.
\begin{proposition} \label{prop:lowerboundonepiofG}
	Under the assumptions of \cref{lem:ersatz_fuer_lemma_sieben_punkt_sechs},
we have
\begin{align} \label{eq_lowerboundonGpp}
 G''(\eqclass{\bar{u}}{0},\bar{p}, \eqclass{\mu}{\nu}) \geq - \sum^N_{j=1}  \frac{1}{\bar{\lambda}_j} \nabla^2\bar{p}(\bar{x}_j) {V}_j^2
\end{align}
whenever there holds
\begin{align*}
  \eqclass{\mu}{\nu}= \eqclass*{\sum^N_{j=1} c_j \delta_{\bar x_j} + \mu_+ + \mu_-}{\sum^N_{j=1} V_j \delta_{ \bar x_j} }
  \quad \text{for some}
  \quad
  \begin{aligned}
    (V_j,c_j) &\in\left( \R^d \times \R \right),~j=1,\dots,N,
    \\
    \mu_+ &\in \MM(\I_+)^+, \mu_- \in \MM(\I_-)^-.
  \end{aligned}
\end{align*}
Otherwise~$G''(\eqclass{\bar{u}}{0},\bar{p}, \eqclass{\mu}{\nu})=\infty$.
\end{proposition}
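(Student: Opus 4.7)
The plan is to invoke~\cref{lem:conjugatelowergen} with the preconjugate $H$ of $G$ identified in~\cref{lem:propofliftG} and the second-order formula from~\cref{prop:equidiffofJ}. Since $H$ is proper, convex, and lower semicontinuous with $H^{*}=G$, and since the first-order condition $\bar p\in\alpha\partial\mnorm{\bar u}$ yields $\bar p\in\partial G(\eqclass{\bar u}{0})$, the strong-strong twice epi-differentiability of $H$ at $\bar p$ for $\eqclass{\bar u}{0}$ furnished by~\cref{prop:equidiffofJ} grants
\[
    \tfrac12\,G''\bigl(\eqclass{\bar u}{0},\bar p;\eqclass{\mu}{\nu}\bigr)
    \;\geq\;
    \bigl(\tfrac12\,H''(\bar p,\eqclass{\bar u}{0};\cdot)\bigr)^{*}(\eqclass{\mu}{\nu})
\]
for every $\eqclass{\mu}{\nu}\in\Cod$. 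By the formula in~\cref{prop:equidiffofJ}, the right-hand side equals
\[
    \sup_{z\in\mathcal Z}
    \left[
        \ddual{z}{\eqclass{\mu}{\nu}}
        + \tfrac12 \sum_{j=1}^{N} \bar\lambda_{j}\,\nabla^{2}\bar p(\bar x_{j})^{-1}\nabla z(\bar x_{j})^{2}
    \right],
\]
and the whole argument reduces to evaluating this supremum.

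Assume first that $\eqclass{\mu}{\nu}$ admits the claimed representation. For the upper bound I exploit the structure of $\mathcal Z$: the constraint $z(\bar x_{j})=0$ annihilates the $\sum c_{j}\delta_{\bar x_{j}}$ part of $\mu$, while $z\le 0$ on $\I_{+}$ with $\mu_{+}\ge 0$ and $z\ge 0$ on $\I_{-}$ with $\mu_{-}\le 0$ force $\int z\,\de(\mu_{+}+\mu_{-})\le 0$. What remains inside the bracket depends only on the vectors $w_{j}:=\nabla z(\bar x_{j})\in\R^{d}$, which can be prescribed independently across $j$ while maintaining $z(\bar x_{j})=0$. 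Because $\sgn\bar\lambda_{j}=s_{j}$ and $s_{j}\nabla^{2}\bar p(\bar x_{j})\leq_{L}-\theta\Id$, each matrix $\bar\lambda_{j}\nabla^{2}\bar p(\bar x_{j})^{-1}$ is negative definite, so the quadratic in $w_{j}$ is strictly concave and decouples; standard calculus yields the maximiser $w_{j}^{\star}=-\bar\lambda_{j}^{-1}\nabla^{2}\bar p(\bar x_{j})V_{j}$ and the maximal value $-\sum_{j}\tfrac{1}{2\bar\lambda_{j}}\nabla^{2}\bar p(\bar x_{j})V_{j}^{2}$. The matching lower bound is realised by an explicit competitor: by~\cref{lem:ersatz_fuer_lemma_sieben_punkt_sechs}, the balls $B_{r_{0}}(\bar x_{j})$ are pairwise disjoint, lie inside $\Intr(\Omega)$, and are disjoint from $\I_{+}\cup\I_{-}$ because $\abs{\bar p}<\alpha$ on $B_{r_{0}}(\bar x_{j})\setminus\set{\bar x_{j}}$. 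Using Urysohn cut-offs supported in these balls I build $z\in\Co$ with $z(\bar x_{j})=0$, $\nabla z(\bar x_{j})=w_{j}^{\star}$, and $\supp z\subset\bigcup_{j}B_{r_{0}}(\bar x_{j})$. Such $z$ automatically lies in $\mathcal Z$ and satisfies $\int z\,\de(\mu_{+}+\mu_{-})=0$ because $\supp\mu_{\pm}\subset\I_{\pm}$, so it attains the decoupled supremum and delivers~\eqref{eq_lowerboundonGpp}.

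When $\eqclass{\mu}{\nu}$ is not of the claimed form, I must show that the same supremum is $+\infty$, which via the above inequality forces $G''(\eqclass{\bar u}{0},\bar p;\eqclass{\mu}{\nu})=\infty$. Introduce the closed linear subspace
\[
    \mathcal Z_{00}\;:=\;\bigl\{z\in\Co\;\big|\;z=0\text{ on }\{\bar x_{j}\}_{j=1}^{N}\cup\I_{+}\cup\I_{-},\;\nabla z(\bar x_{j})=0\;\forall j\bigr\}\;\subset\;\mathcal Z.
\]
If $\eqclass{\mu}{\nu}$ does not annihilate $\mathcal Z_{00}$, pick $z\in\mathcal Z_{00}$ with $\ddual{z}{\eqclass{\mu}{\nu}}\neq 0$; the quadratic contribution vanishes on $\mathcal Z_{00}$, and scaling $\lambda z\in\mathcal Z$ with $\abs{\lambda}\to\infty$ drives the supremum to $+\infty$. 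In the complementary case $\eqclass{\mu}{\nu}\in\mathcal Z_{00}^{\perp}$, a Hahn--Banach/density argument produces a representative $(\mu,\nu)$ with $\mu$ supported in $\{\bar x_{j}\}_{j=1}^{N}\cup\I_{+}\cup\I_{-}$ and $\nu=\sum_{j}V_{j}\delta_{\bar x_{j}}$; failure of the claimed form then means either that $\mu|_{\I_{+}}$ has a non-trivial negative part or that $\mu|_{\I_{-}}$ has a non-trivial positive part, and a test function in $\mathcal Z$ concentrated on the offending sign piece, with $\nabla z(\bar x_{j})=0$ to neutralise the quadratic, is scaled to blow up the supremum. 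The principal obstacle of the proof is precisely this final characterisation of $\mathcal Z_{00}^{\perp}$ inside the quotient $\Cod$: disentangling the annihilator $C^{\perp}$ from~\cref{lem:identofdual} from the point-value and derivative constraints defining $\mathcal Z_{00}$ requires a careful separation argument, after which the two sub-cases combine to complete the proof.
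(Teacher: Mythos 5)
Your proposal starts from the same two ingredients as the paper---\cref{lem:conjugatelowergen} and the explicit second subderivative $H''$ from \cref{prop:equidiffofJ}---so the reduction to computing $\bigl(\tfrac12 H''(\bar p,\eqclass{\bar u}{0};\cdot)\bigr)^*$ is the same. Your treatment of the case where $\eqclass{\mu}{\nu}$ has the asserted form is essentially correct and arguably more elementary than the paper's: the decoupling over $j$ and the explicit concave-quadratic maximization produce the right lower bound for the supremum, and the choice of $z$ supported in $\bigcup_j B_{r_0}(\bar x_j)$ (which by \cref{lem:ersatz_fuer_lemma_sieben_punkt_sechs} is disjoint from $\I_+\cup\I_-$) is a valid competitor in $\mathcal Z$.

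The genuine gap is in the second case, where you must show $G''(\eqclass{\bar u}{0},\bar p;\eqclass{\mu}{\nu})=\infty$ whenever $\eqclass{\mu}{\nu}$ is not of the claimed form. You propose to split on whether $\eqclass{\mu}{\nu}$ annihilates $\mathcal Z_{00}$, and in the second subcase you assert that ``a Hahn--Banach/density argument produces a representative $(\mu,\nu)$ with $\mu$ supported in $\{\bar x_j\}\cup\I_+\cup\I_-$ and $\nu=\sum_j V_j\delta_{\bar x_j}$.'' This is precisely the hard step, and it is not justified: the annihilator $\mathcal Z_{00}^\perp$ is the weak* closure of a span of point evaluations, derivative evaluations, and Dirac masses on $\I_+\cup\I_-$, modulo the quotient by $C^\perp$, and it is far from obvious (and you flag it yourself as the ``principal obstacle'') that every element of $\mathcal Z_{00}^\perp$ has such a measure-theoretic representative. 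The subsequent ``test function concentrated on the offending sign piece'' also requires a Hahn decomposition of $\mu|_{\I_\pm}$ into sets that are in general only Borel, and constructing a $C^1$ separator that simultaneously respects $z\le 0$ on all of $\I_+$ and $z\ge 0$ on all of $\I_-$ is not carried out. The paper avoids this entirely by writing $\tfrac12 H''=g\circ A$ and applying the infimal-postcomposition formula $(g\circ A)^*(\eqclass{\mu}{\nu})=\inf\set{g^*(\Phi)\given A^*\Phi=\eqclass{\mu}{\nu}}$, with $\inf\emptyset=\infty$: this requires only a constraint-qualification check ($0\in\Intr(\dom g-\Ran A)$) and an explicit computation of $A^*$, after which membership in the range of $A^*$ \emph{is} the characterization of the admissible $\eqclass{\mu}{\nu}$. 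You should either reproduce that computation or find an alternative justification for your annihilator claim; as written, the second half of your proof does not go through.
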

\begin{proof}
We utilize \cref{prop:equidiffofJ,lem:conjugatelowergen}.
In order to compute the conjugate,
observe that~$H''(\bar{p},\eqclass{\bar{u}}{0};\cdot)$ can be rewritten as a composition,
\begin{align*}
\frac12 H''(\bar{p},\eqclass{\bar{u}}{0};z)=- \sum_{j=1}^N \bar{\lambda}_j \nabla^2 \bar{p}(\bar{x}_j)^{-1}\nabla z(\bar{x}_j)^2+I_{\mathcal{Z}}(z)= g(Az),
\end{align*}
where we define
\begin{align*}
A \colon \Co \to \parens*{\R^d \times \R}^N \times C(\I_+) \times C(\I_-),
	\quad Az= \parens*{ \nabla z(\bar{x}_1), z(\bar{x}_1), \dots, \nabla z(\bar{x}_N), z(\bar{x}_N), z_{|\I_+}, z_{|\I_-} }
\end{align*}
as well as~$g \colon \parens*{\R^d \times \R }^N \times C(\I_+) \times C(\I_-) \to (-\infty, \infty]$ by
\begin{align*}
	g(V_1,c_1,\dots,V_N,c_N, w_+, w_- )
	=
	\sum^N_{j=1} \bracks*{ -\frac{\bar{\lambda}_j}{2} \nabla^2 \bar{p}(\bar{x}_j)^{-1}V_j^2+I_{\{0\}}(c_j) }
	+
	I_{\le 0}(w_+)
	+
	I_{\ge 0}(w_-)
	.
\end{align*}
where
\begin{align*}
    I_{\le 0}(w) = \begin{cases}
        0 & w  \leq 0,\\  \infty & \text{else},
    \end{cases}
    \qquad
    I_{\ge 0}(w) = \begin{cases}
        0 & w  \geq 0,\\  \infty & \text{else},
    \end{cases}
    \qquad
    I_{{0}}(c) = \begin{cases}
        0 & c=0 \\  \infty, & \text{else}.
    \end{cases}
\end{align*}
We claim that there holds
\begin{align} \label{eq:fullspace}
         \dom g - \Ran A =\parens*{\R^d \times \R }^N \times C(\I_+) \times C(\I_-).
\end{align}
    Indeed, consider an arbitrary element
    \begin{align*}
        \Psi =(V_1,c_1,\dots,V_N,c_N, w_1, w_2 ) \in \parens*{\R^d \times \R }^N \times C(\I_+) \times C(\I_-) 
    \end{align*}
    and let~$r>0$ be small enough such that the sets~$\seq{O_j}^{N+2}_{j=1}$, where
    \begin{align*}
        O_j=B_r(\Bar{x}_j),~j=1,\dots,N,\quad O_{N+1}=\I_+, \quad O_{N+2}=\I_-,
    \end{align*}
    are pairwise disjoint. Furthermore, let~$\seq{\varphi_j}^{N+2}_{j=1}$ denote a family of Urysohn functions subordinate to $\seq{O_j}^{N+2}_{j=1}$. Define  
    \begin{align*}
        \Phi=(V_1,0,V_2,0,\dots,V_N,0, w_1-\cnorm{w_1}, w_2+\cnorm{w_2}  ) \in \dom g
    \end{align*}
    as well as
    \begin{align*}
        z=-\sum^N_{j=1} c_j \varphi_j- \cnorm{w_1}\varphi_{N+1}+ \cnorm{w_2}\varphi_{N+2}. 
    \end{align*}
    We readily verify~$\Psi=\Phi-A(z)$, hence~\eqref{eq:fullspace} holds.
In particular, this implies $0 \in \Intr( \dom g - \Ran A )$.
Thus, the conjugate of $g \circ A$
is given by the infimal postcomposition,
cf.\ \cite[Theorem~15.27]{BauschkeCombettes2011}%
\footnote{This reference only deals with Hilbert spaces, but the proof can be applied to Banach spaces as well.},
i.e.,
\begin{align*}
    \MoveEqLeft
    \parens*{ \frac12 H''(\bar{p},\eqclass{\bar{u}}{0},\cdot)}^*(\eqclass{\mu}{\nu})
    \\&
    =
    \inf \set*{
        g^*(\bar V_1,\bar c_1,\dots,\bar V_N,\bar c_N, \bar\mu_+, \bar\mu_- )
        \given
        A^*(\bar V_1,\bar c_1,\dots,\bar V_N, \bar c_N, \bar\mu_+, \bar\mu_- )=\eqclass{\mu}{\nu}
    },
\end{align*}
where we again adapt the convention of~$\inf \emptyset=\infty$. We readily verify
\begin{align} \label{eq:helpform}
	A^*(\bar V_1,\bar c_1,\dots,\bar V_N,\bar c_N, \bar\mu_+, \bar\mu_- )
	=
	\eqclass*{
		\sum^N_{j=1}\bar c_j \delta_{\bar x_j}
		+
		\bar\mu_+
		+
		\bar\mu_-
	}{
		\sum^N_{j=1}\bar V_j \delta_{ \bar x_j}
	}.
\end{align}
This immediately implies~$G''(\eqclass{\bar{u}}{0},\bar{p}, \eqclass{\mu}{\nu})=\infty$ if~$\eqclass{\mu}{\nu}$ is not of the form
\begin{align*}
  \eqclass{\mu}{\nu}
  =
  \eqclass*{
    \sum^N_{j=1}\bar c_j \delta_{\bar x_j}
    +
    \bar\mu_+ + \bar\mu_-
  }{
    \sum^N_{j=1}\bar V_j \delta_{ \bar x_j}
  }
\end{align*}
for some $(\bar V_j, \bar c_j) \in \R^d \times \R$
and
$\bar\mu_\pm \in \MM(\I_\pm)$.
Next, we observe that the function $g$
is given by a sum of independent terms.
Consequently,
we can compute the conjugate
by summing the conjugates of the summands.
This yields
\begin{equation*}
  g^*(\bar{V}_1,\bar{c}_1,\dots,\bar V_N, \bar c_N, \bar\mu_+, \bar\mu_- ) 
  =
  -\sum_{j = 1}^N \frac{1}{2 \bar\lambda_j} \nabla^2 \bar p(\bar x_j) \bar V_j^2
  +
  I_{\ge 0}(\bar\mu_+)
  +
  I_{\le 0}(\bar\mu_-)
  .
\end{equation*}
One can check that $\Ran A$ is dense,
consequently, the adjoint $A^*$ is injective.
The claimed statement follows.
\end{proof}
Now, in order to show the twice weak* epi-differentiability of~$G$,
we need to construct a suitable recovery sequence which achieves equality in~\eqref{eq_lowerboundonGpp}. 
We start with a lemma in which we consider the contribution at a single point.
\begin{lemma}
\label{lem:weaksforpert}
Let the assumptions of \cref{lem:ersatz_fuer_lemma_sieben_punkt_sechs} be satisfied.
Let~$(V, c) \in \R^d \times \R$ and $j \in \set{1,\ldots,N}$ be given. We define
\begin{align*}
  \eqclass{\mu}{\nu} \coloneqq \eqclass{c \delta_{\bar x_j}}{V \delta_{\bar x_j}}.
\end{align*}
For an arbitrary sequence~$\seq{t_k} \subset (0,\infty)$ with $t_k \searrow 0$, we define
\begin{align*}
    \mu_k
  \coloneqq
    c \delta_{\bar x_j}
    +
    (\bar{\lambda}_j + t_k c) \frac{ \delta_{\bar{x}_j+(t_k/(\bar{\lambda}_j + t_k c )) V }- \delta_{\bar{x}_j}}{t_k}
\end{align*}
for $k$ large enough such that $\bar \lambda_j + t_k c \ne 0$.
Then there holds
\begin{align*}
\eqclass{\mu_k}{0} \rightharpoonup^* \eqclass{\mu}{\nu} \quad \text{in} \quad \Cod
\end{align*}
as well as
\begin{align} \label{def:helpstrict}
  \lim_{k \rightarrow \infty}
  \frac{
    G\parens[\big]{\eqclass*{\bar{\lambda}_j \delta_{\bar{x}_j}+t_k\mu_k}{0}}
    -
    G\parens[\big]{\eqclass*{\bar{\lambda}_j \delta_{\bar{x}_j}}{0}}
    -
    t_k \ddual{\bar{p}}{\eqclass{\mu_k}{0}}
  }{t^2_k/2}
  =
  - \frac{1}{\bar{\lambda}_j} \nabla^2 \bar{p}(\bar{x}_j)V^2
  .
\end{align}
\end{lemma}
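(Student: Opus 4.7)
The plan is to proceed by an algebraic simplification that turns $\bar\lambda_j \delta_{\bar x_j} + t_k \mu_k$ into a single shifted Dirac and then run first-order Taylor expansions for the weak-$*$ convergence and second-order Taylor expansions for the quadratic limit. First I would set $\sigma_k \coloneqq t_k / (\bar\lambda_j + t_k c)$, which is well-defined and $O(t_k)$ for $k$ large, and verify by direct computation that
\begin{equation*}
	\bar\lambda_j \delta_{\bar x_j} + t_k \mu_k
	= (\bar\lambda_j + t_k c) \, \delta_{\bar x_j + \sigma_k V}.
\end{equation*}
This identity is the engine of everything that follows: both $G$ and the action of $\bar p$ on the left-hand side reduce to scalar operations at a single nearby point.

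Next, I would establish the weak-$*$ convergence $\eqclass{\mu_k}{0} \weakstar \eqclass{\mu}{\nu}$. For $\varphi \in \Co$, a first-order Taylor expansion yields $\varphi(\bar x_j + \sigma_k V) = \varphi(\bar x_j) + \sigma_k \nabla\varphi(\bar x_j)^\top V + o(\sigma_k)$, and inserting this into the definition of $\mu_k$ gives, after cancellation of the singular $1/t_k$,
\begin{equation*}
	\ddual{\varphi}{\eqclass{\mu_k}{0}}
	= c\,\varphi(\bar x_j) + \frac{\bar\lambda_j + t_k c}{t_k}\bigl(\varphi(\bar x_j + \sigma_k V) - \varphi(\bar x_j)\bigr)
	\longrightarrow c\,\varphi(\bar x_j) + \nabla\varphi(\bar x_j)^\top V,
\end{equation*}
and the right-hand side is exactly $\ddual{\varphi}{\eqclass{c\delta_{\bar x_j}}{V\delta_{\bar x_j}}}$. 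Pointwise convergence together with the resulting uniform bound on $\conorm{\cdot}$ gives weak-$*$ convergence in $\Cod$.

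For the limit \eqref{def:helpstrict}, I would compute the three terms separately. Using the reduction above and the fact that $\sgn(\bar\lambda_j + t_k c) = \sgn(\bar\lambda_j) = s_j$ for $k$ large, one has
\begin{equation*}
	G(\eqclass{\bar\lambda_j \delta_{\bar x_j} + t_k \mu_k}{0}) - G(\eqclass{\bar\lambda_j \delta_{\bar x_j}}{0})
	= \alpha\bigl(|\bar\lambda_j + t_k c| - |\bar\lambda_j|\bigr)
	= t_k c\, s_j\alpha
	= t_k c\, \bar p(\bar x_j).
\end{equation*}
For $t_k \ddual{\bar p}{\eqclass{\mu_k}{0}}$, the crucial observation is that $\bar x_j \in \Intr(\Omega)$ is an interior extremum of $\bar p$ (since $|\bar p| \le \alpha$ on $\Omega$ and $\bar p(\bar x_j) = s_j\alpha$), hence $\nabla \bar p(\bar x_j) = 0$. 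A second-order Taylor expansion at $\bar x_j$ — available because $\bar p$ is assumed twice continuously differentiable around $\bar x_j$ — gives $\bar p(\bar x_j + \sigma_k V) - \bar p(\bar x_j) = \tfrac12 \sigma_k^2 \nabla^2\bar p(\bar x_j) V^2 + o(\sigma_k^2)$, so that
\begin{equation*}
	t_k \ddual{\bar p}{\eqclass{\mu_k}{0}}
	= t_k c\, \bar p(\bar x_j) + \frac{t_k^2}{2(\bar\lambda_j + t_k c)}\nabla^2\bar p(\bar x_j) V^2 + o(t_k^2).
\end{equation*}

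Subtracting, the first-order terms in $t_k$ cancel exactly and the numerator in \eqref{def:helpstrict} equals $-\tfrac{t_k^2}{2(\bar\lambda_j + t_k c)}\nabla^2\bar p(\bar x_j) V^2 + o(t_k^2)$. Dividing by $t_k^2/2$ and sending $k \to \infty$ (so $\bar\lambda_j + t_k c \to \bar\lambda_j$) yields the claimed value $-\bar\lambda_j^{-1}\nabla^2\bar p(\bar x_j) V^2$. The only point that needs care is keeping track of which terms are truly $o(t_k^2)$: the first-order cancellation is exact thanks to the identity $\alpha s_j = \bar p(\bar x_j)$, so the remainder comes entirely from the $o(\sigma_k^2)$ error in the Taylor expansion of $\bar p$, which is indeed $o(t_k^2)$ since $\sigma_k = O(t_k)$.
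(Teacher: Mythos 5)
Your proof is correct and follows essentially the same route as the paper: same substitution $\sigma_k = t_k/(\bar\lambda_j + t_k c)$, same reduction of $\bar\lambda_j\delta_{\bar x_j} + t_k\mu_k$ to a single shifted Dirac, same first-order Taylor argument for weak-$*$ convergence, and the same second-order Taylor expansion of $\bar p$ using $\nabla\bar p(\bar x_j)=0$. The only cosmetic difference is that the paper splits the limit computation into the cases $V=0$ and $V\neq 0$, whereas you handle both uniformly by computing the $G$-difference and the $\ddual{\bar p}{\cdot}$ term separately and observing the exact cancellation of the first-order contributions via $\alpha s_j = \bar p(\bar x_j)$; this is a mild streamlining, not a different argument.
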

\begin{proof}
In the sequel, $k$ is large enough such that $t_k < \abs{\bar\lambda_j}/\abs{c}$
and $\bar x_j + \tilde t_k V \in \Omega$.
In particular, $\bar\lambda_j + t_k c$ has the same sign as $\bar\lambda_j$
and, consequently, is not zero.
For abbreviation, we set
\begin{align*}
\tilde{t}_k= t_k/(\bar{\lambda}_j + t_k c) \to 0.
\end{align*}
In order to show the weak* convergence,
let~$\varphi \in \Co$ be arbitrary. Then we have
\begin{align*}
\ddual{\varphi}{\eqclass{\mu_k}{0}}= c \varphi(\bar x_j)+ \left(\varphi(\bar{x}_j+\tilde{t}_k V )- \varphi(\bar{x}_j) \right)/ \tilde{t}_k.
\end{align*}
Due to $\varphi \in \Co$, we arrive at
\begin{align*}
\lim_{k \rightarrow \infty} \ddual{\varphi}{\eqclass{\mu_k}{0}}= c \varphi(\bar x_j)+ V^\top \nabla \varphi(\bar{x}_j)=\ddual{\varphi}{\eqclass{\mu}{\nu}}.                                                                                                                                    
\end{align*} 
Next, we show~\eqref{def:helpstrict} in case~$V=0$. We have
\begin{align*}
  \frac{
    G\parens[\big]{\eqclass*{\bar{\lambda}_j \delta_{\bar{x}_j}+t_k\mu_k}{0}}
    -
    G\parens[\big]{\eqclass*{\bar{\lambda}_j \delta_{\bar{x}_j}}{0}}
    -
    t_k \ddual{\bar{p}}{\eqclass{\mu_k}{0}}
  }{t^2_k/2}
  =  \frac{\alpha\abs{\bar{\lambda}_j+t_k c}-\alpha \abs{\bar{\lambda}_j}-t_k s_j \alpha c }{(t^2_k/2)} 
  = 0,
\end{align*}
where we used
\begin{align*}
    \abs{\bar{\lambda}_j+t_k c}=s_j(\bar{\lambda}_j+t_k c)=\abs{\Bar{\lambda}_j}+s_j t_k c \quad \text{as well as} \quad  \bar{p}(\bar{x}_j)=s_j \alpha.
\end{align*}
Finally,
if~$V \neq 0$, we have
\begin{align*}
    t_k \mu_k &=
    -\bar\lambda_j \delta_{\bar x_j} + (\bar\lambda_j + t_k c) \delta_{\bar x_j + \tilde t_k V}
    ,
    &
    \bar\lambda_j \delta_{\bar x_j} + t_k \mu_k &=
    (\bar\lambda_j + t_k c) \delta_{\bar x_j + \tilde t_k V}
    .
\end{align*}
Thus, we arrive at
\begin{align*}
  \MoveEqLeft
  \frac{
    G\parens[\big]{\eqclass*{\bar{\lambda}_j \delta_{\bar{x}_j}+t_k\mu_k}{0}}
    -
    G\parens[\big]{\eqclass*{\bar{\lambda}_j \delta_{\bar{x}_j}}{0}}
    -
    t_k \ddual{\bar{p}}{\eqclass{\mu_k}{0}}
  }{t^2_k/2}
  \\&
  =
  \frac{
    \alpha \abs{\bar\lambda_j + t_k c} - \alpha \abs{\bar\lambda_j}
    +
    \bar\lambda_j \bar p(\bar x_j)
    - (\bar\lambda_j + t_k c) \bar p(\bar x_j + \tilde t_k V)
  }{t^2_k/2}
  \\&
  =
  \frac{
    \parens{\bar\lambda_j + t_k c} \bar p(\bar x_j)
    - (\bar\lambda_j + t_k c) \bar p(\bar x_j + \tilde t_k V)
  }{t^2_k/2}
  =
  \frac{2}{\parens{\bar\lambda_j + t_k c}} \frac{ \bar p(\bar x_j) - \bar p(\bar x_j + \tilde t_k V)}{\tilde t_k^2}
  \\&
  \to
  -\frac1{\bar\lambda_j} \nabla^2 \bar p(\bar x_j) V^2,
\end{align*}
where we used a second-order Taylor expansion of $\bar p$
at $\bar x_j$
together with $\nabla \bar p(\bar x_j) = 0$
in the last step.
This finishes the proof.
\end{proof}
\begin{theorem} \label{thm:twiceepiofG}
	We assume that $\bar u \in \M$
	is of the form \eqref{eq:sparsemeasSSC}.
	Further,
	$\bar p \in \Co \cap \alpha \partial  \mnorm{\bar{u}}$
	is given such that \eqref{eq:def_hes_p}
	holds,
	where $\A$ is defined in \eqref{eq:active_sets}.

Then, the functional~$G$ is twice weakly* epi-differentiable at $\eqclass{\bar{u}}{0}$ for $\bar p$.
We have
\begin{align}
    \label{eq:subderivative}
 G''(\eqclass{\bar{u}}{0},\bar{p}, \eqclass{\mu}{\nu})
 =
 - \sum_{j = 1}^N  \frac{1}{\bar{\lambda}_j} \nabla^2\bar{p}(\bar{x}_j) {V}_j^2
 ,
\end{align}
whenever there holds
\begin{align} \label{eq:finalformofmu}
  \eqclass{\mu}{\nu}= \eqclass*{\sum^N_{j=1} c_j \delta_{\bar x_j} + \mu_+ + \mu_-}{\sum^N_{j=1} V_j \delta_{ \bar x_j} }
  \quad \text{for some}
  \quad
  \begin{aligned}
    (V_j,c_j) &\in\left( \R^d \times \R \right),~j=1,\dots,N,
    \\
    \mu_+ &\in \MM(\I_+)^+, \mu_- \in \MM(\I_-)^-.
  \end{aligned}
\end{align}
Otherwise~$G''(\eqclass{\bar{u}}{0},\bar{p}, \eqclass{\mu}{\nu})=\infty$.
\end{theorem}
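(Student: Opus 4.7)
The two key halves of \eqref{eq:subderivative} are, respectively, a lower bound and an upper bound. The lower bound
\[
G''(\eqclass{\bar u}{0}, \bar p; \eqclass{\mu}{\nu}) \ge -\sum_{j=1}^N \bar\lambda_j^{-1} \nabla^2 \bar p(\bar x_j) V_j^2
\]
for admissible $\eqclass{\mu}{\nu}$, together with the conclusion $G''(\eqclass{\bar u}{0}, \bar p; \eqclass{\mu}{\nu}) = \infty$ whenever $\eqclass{\mu}{\nu}$ cannot be written in the form \eqref{eq:finalformofmu}, is already provided by \cref{prop:lowerboundonepiofG}. All that remains is to produce, for each admissible $\eqclass{\mu}{\nu}$ and each arbitrary sequence $\seq{t_k} \subset (0,\infty)$ with $t_k \searrow 0$, a recovery sequence $\seq{h_k} \subset \Cod$ with $h_k \weakstar \eqclass{\mu}{\nu}$ along which the quotient
\[
\frac{G(\eqclass{\bar u}{0} + t_k h_k) - G(\eqclass{\bar u}{0}) - t_k \ddual{\bar p}{h_k}}{t_k^2/2}
\]
converges to $-\sum_{j=1}^N \bar\lambda_j^{-1} \nabla^2 \bar p(\bar x_j) V_j^2$; this will then also upgrade the $\liminf$ in \cref{def:weakssubderivative} to a limit, yielding weak* twice epi-differentiability.

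My candidate is obtained by superposing the pointwise constructions of \cref{lem:weaksforpert} with the fixed extremal measures, namely
\[
h_k \coloneqq \eqclass*{\sum_{j=1}^N \mu_{k,j} + \mu_+ + \mu_-}{0},
\]
where $\mu_{k,j}$ is the sequence yielded by \cref{lem:weaksforpert} applied with data $(V_j, c_j)$ at the point $\bar x_j$, and $\mu_\pm \in \MM(\I_\pm)^\pm$ are the components from the decomposition \eqref{eq:finalformofmu}, used as constant-in-$k$ perturbations. The weak* convergence $h_k \weakstar \eqclass{\mu}{\nu}$ in $\Cod$ follows by testing against $\varphi \in \Co$ and invoking the single-point weak* convergence already shown in \cref{lem:weaksforpert} componentwise, together with the constancy of the $\mu_\pm$ terms.

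For the cost computation, I exploit \cref{lem:ersatz_fuer_lemma_sieben_punkt_sechs}: the negative definiteness in \eqref{eq:def_hes_p} forces $\abs{\bar p} < \alpha$ on $B_{r_0}(\bar x_j) \setminus \{\bar x_j\}$, so for $r_0 > 0$ sufficiently small the balls $B_{r_0}(\bar x_j)$ are pairwise disjoint and each disjoint from $\I_+ \cup \I_-$. Since $\supp \mu_{k,j} \subset \{\bar x_j, \bar x_j + \tfrac{t_k}{\bar\lambda_j + t_k c_j} V_j\} \subset B_{r_0}(\bar x_j)$ for $k$ large, I obtain the disjoint decomposition
\[
\bar u + t_k h_k = \sum_{j=1}^N (\bar\lambda_j + t_k c_j)\, \delta_{y_{k,j}} + t_k \mu_+ + t_k \mu_-,
\qquad
y_{k,j} \coloneqq \bar x_j + \tfrac{t_k}{\bar\lambda_j + t_k c_j} V_j,
\]
so that $\mnorm{\bar u + t_k h_k}$ splits additively and, for $k$ large, $\abs{\bar\lambda_j + t_k c_j} = \abs{\bar\lambda_j} + s_j t_k c_j$. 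Similarly, $\ddual{\bar p}{\eqclass{h_k}{0}}$ splits over the components, and the $\mu_\pm$ contributions to the numerator cancel exactly because $\bar p \equiv \pm\alpha$ on $\I_\pm$. The remaining localized pieces reduce, via the Taylor expansion of $\bar p$ at $\bar x_j$ together with $\nabla \bar p(\bar x_j) = 0$, to $-\bar\lambda_j^{-1} \nabla^2 \bar p(\bar x_j) V_j^2$ in the limit, by exactly the single-point calculation carried out in \cref{lem:weaksforpert}.

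The principal obstacle is the bookkeeping of three different types of contributions --- the moving Dirac masses at the $\bar x_j$, the potentially non-atomic components $\mu_\pm$ supported on $\I_\pm$, and the sign control on the Jordan decomposition required for $\mnorm{\cdot}$ to split additively --- and the verification that they do not interact for $k$ large. Once the uniform support separation provided by \cref{lem:ersatz_fuer_lemma_sieben_punkt_sechs} is secured, the remaining steps are algebraic manipulations of the kind already performed in \cref{lem:weaksforpert}.
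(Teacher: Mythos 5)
Your proposal is correct and follows essentially the same route as the paper's own proof: the lower bound and the $\infty$-characterization are taken from \cref{prop:lowerboundonepiofG}, the recovery sequence is built by superposing the single-point constructions of \cref{lem:weaksforpert} with the fixed measures $\mu_\pm$, the support separation of \cref{lem:ersatz_fuer_lemma_sieben_punkt_sechs} is used to make $\mnorm{\cdot}$ and the duality pairing split additively, and the $\mu_\pm$ contributions cancel against the duality term because $\bar p \equiv \pm\alpha$ on $\I_\pm$. The only superficial difference is that you explicitly rewrite $\bar u + t_k h_k$ in terms of the shifted Diracs $\delta_{y_{k,j}}$ before invoking additivity, whereas the paper directly decomposes the three numerator terms over $j$, $\mu_+$, $\mu_-$ and then sums \eqref{def:helpstrict}; the content is the same.
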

\begin{proof}
    From \cref{prop:lowerboundonepiofG}
    we already know that
    the second subderivative is $\infty$ if $\eqclass{\mu}{\nu}$ is not as in \eqref{eq:finalformofmu}
    and that
    ``$\ge$'' in \eqref{eq:subderivative} holds.
    It remains to construct a recovery sequence
    for $\eqclass{\mu}{\nu}$ as in \eqref{eq:finalformofmu}.
    To this end, let a sequence~$\seq{t_k} \subset (0,\infty)$ with~$t_k \searrow 0$ be given.
    In view of \cref{lem:weaksforpert},
    we define
    \begin{equation*}
    \mu_k
  \coloneqq
  \sum_{j = 1}^N \mu_{j,k} + \mu_+ + \mu_-
  \coloneqq
    \sum_{j = 1}^N \bracks*{
      c_j \delta_{\bar x_j}
      +
      (\bar{\lambda}_j + t_k c_j) \frac{ \delta_{\bar{x}_j+(t_k/(\bar{\lambda}_j + t_k c_j )) V_j }- \delta_{\bar{x}_j}}{t_k}
    }
    +
    \mu_+ + \mu_-
    \end{equation*}
    for $k$ large enough.
    From \cref{lem:weaksforpert}
    we get $\eqclass{\mu_k}{0} \weakstar \eqclass{\mu}{\nu}$.
    Moreover, for $k$ large enough, we have
    \begin{align*}
        G\parens[\big]{\eqclass*{\bar u+t_k\mu_k}{0}}
        &=
        \sum_{j = 1}^N G\parens[\big]{\eqclass*{\bar \lambda_j \delta_{\bar x_j} + t_k \mu_{j,k}}{0}}
        +
        t_k G(\mu_+)
        +
        t_k G(\mu_-)
        ,
        \\
        G\parens[\big]{\eqclass*{\bar u}{0}}
        &=
        \sum_{j = 1}^N G\parens[\big]{\eqclass*{\bar \lambda_j \delta_{\bar x_j}}{0}}
        ,
        \\
        \ddual{\bar{p}}{\eqclass{\mu_k}{0}}
        &=
        \sum_{j = 1}^N
        \ddual{\bar{p}}{\eqclass{\mu_{j,k}}{0}}
        +
        \alpha \mu_+(\I_+)
        +
        \alpha \abs{\mu_-(\I_-)}
        .
    \end{align*}
    Note that
    $G(\mu_+) = \mu_+(\I_+)$
    and
    $G(\mu_-) = \abs{\mu_-(\I_-)}$.
    Consequently,
    summing over
    \eqref{def:helpstrict}
    yields
    \begin{equation*}
      \lim_{k \rightarrow \infty}
      \frac{
        G\parens[\big]{\eqclass*{\bar u+t_k\mu_k}{0}}
        -
        G\parens[\big]{\eqclass*{\bar u}{0}}
        -
        t_k \ddual{\bar{p}}{\eqclass{\mu_k}{0}}
      }{t^2_k/2}
      =
      - \sum_{j = 1}^N \frac{1}{\bar{\lambda}_j} \nabla^2 \bar{p}(\bar{x}_j)V^2
      .
    \end{equation*}
    This proves the claim.
\end{proof}

Now, we can finally prove that~\ref{point:struc} implies~\eqref{eq:SSCmeasures}.
\begin{lemma}
	\label{lem:B1_implies_SSC}
	Let the assumptions of \cref{thm:main} be satisfied.
	Then,
	\ref{point:struc} implies \eqref{eq:SSCmeasures}.
	In case that $\loss$ is convex, \ref{point:struc} is equivalent to \ref{item:convex_1}.
\end{lemma}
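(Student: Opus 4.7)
The plan is to invoke \cref{thm:twiceepiofG}, which explicitly computes the weak* second subderivative $G''(\eqclass{\bar u}{0},\bar p;\cdot)$, and to observe that under this characterization the conditions \eqref{eq:SSCmeasures} and \eqref{eq:SOC2} become essentially the same inequality. All the heavy analysis has already been carried out in \cref{thm:twiceepiofG}; what remains is a bookkeeping argument together with a short sign computation in the convex case.

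For the implication \ref{point:struc} $\Rightarrow$ \eqref{eq:SSCmeasures}, I would first note that by \cref{thm:twiceepiofG}, the value $G''(\eqclass{\bar u}{0},\bar p;h)$ equals $+\infty$ unless $h$ admits a representation $h = \eqclass{\mu}{\sum_{j=1}^N V_j\delta_{\bar x_j}}$ with $\mu \in \mathcal{C}$ and $V = (V_j) \in (\R^d)^N$, where the matching with \eqref{eq:finalformofmu} uses $\mu_- \in \MM(\I_-)^-$ versus $\widehat\mu_- \in \MM(\I_-)^+$, differing only by a sign. On this admissible set, $G''$ equals $-\sum_j \bar\lambda_j^{-1} \nabla^2\bar p(\bar x_j) V_j^2$, so that the left-hand side of \eqref{eq:SSCmeasures} is verbatim the expression on the left of \eqref{eq:SOC2}. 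The only delicate point is that \eqref{eq:SSCmeasures} quantifies over $h \neq 0$ in $\Cod$ while \eqref{eq:SOC2} quantifies over nonzero $(\mu,V) \in \mathcal{C} \times (\R^d)^N$, so I need to check that the parametrization $(\mu,V) \mapsto \eqclass{\mu}{\sum_j V_j\delta_{\bar x_j}}$ is injective. For this I would employ cutoff functions $\varphi_j,\psi_j \in \Co$ supported in a ball $B_{r_0/2}(\bar x_j)$; by \cref{lem:ersatz_fuer_lemma_sieben_punkt_sechs}, this ball is disjoint from the other $\bar x_i$ and, thanks to the strict bound \eqref{eq:hess_p_psd_locally_around_bar_x}, also from $\I_\pm$. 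Prescribing $\varphi_j(\bar x_j)=1$, $\nabla\varphi_j(\bar x_j)=0$, and $\psi_j(\bar x_j)=0$, $\nabla\psi_j(\bar x_j)=e_k$, recovers $c_j$ and $V_j$, and the uniqueness of the Jordan decomposition of $\widehat\mu_+ - \widehat\mu_-$ on the disjoint compact sets $\I_\pm$ takes care of the remaining components.

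To establish the equivalence with \ref{item:convex_1} in the convex case, I would plug the sign identity $\bar\lambda_j = s_j|\bar\lambda_j|$ together with the Loewner bound $s_j\nabla^2\bar p(\bar x_j) \leq_L -\theta\Id$ from \eqref{eq:def_hes_p} into the $V$-term of \eqref{eq:SOC2} to obtain
\begin{equation*}
  -\frac{1}{\bar\lambda_j}\nabla^2\bar p(\bar x_j) V_j^2 \geq \frac{\theta}{|\bar\lambda_j|}|V_j|^2 \geq 0.
\end{equation*}
Since convexity of $\loss$ makes $\nabla^2\loss(K\bar u)(\Kb h)^2 \geq 0$, this renders \eqref{eq:SOC2} automatic whenever some $V_j \neq 0$; consequently, \eqref{eq:SOC2} reduces to its restriction to pairs $(\mu, 0)$, where $\Kb h = K\mu$ and the inequality coincides with \eqref{eq:defofHess}. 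Both directions of the equivalence then follow.

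The main obstacle I anticipate is the injectivity of the parametrization $(\mu,V) \mapsto \eqclass{\mu}{\sum_j V_j\delta_{\bar x_j}}$, specifically the need to separate the atomic and gradient contributions at each $\bar x_j$ from the $\widehat\mu_\pm$-part. This is precisely where the uniform curvature bound \eqref{eq:def_hes_p}, via the local separation $\I_\pm \cap B_{r_0}(\bar x_j) = \emptyset$ extracted from \cref{lem:ersatz_fuer_lemma_sieben_punkt_sechs}, becomes essential, allowing localized test functions to cleanly isolate the contribution at each $\bar x_j$.
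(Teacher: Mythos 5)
Your proposal is correct and follows essentially the same route as the paper: invoke \cref{thm:twiceepiofG} to obtain the explicit formula for $G''$, match the resulting inequality with \eqref{eq:SOC2}, and handle the convex case by the sign/Loewner computation $-\bar\lambda_j^{-1}\nabla^2\bar p(\bar x_j) V_j^2 \ge \theta\abs{V_j}^2/\abs{\bar\lambda_j}$ together with case distinction on whether $V = 0$.

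The one place where you spend effort the paper does not is the injectivity of $(\mu,V) \mapsto \eqclass*{\mu}{\sum_j V_j\delta_{\bar x_j}}$. This is unnecessary here. First, the well-definedness of the formula in \cref{thm:twiceepiofG} is already guaranteed by the theorem (and is established in the proof of \cref{prop:lowerboundonepiofG}, via the injectivity of $A^*$); you do not need to re-derive it. Second, for the implication \ref{point:struc} $\Rightarrow$ \eqref{eq:SSCmeasures}, only the trivial direction is required: given $h \ne 0$ of the form \eqref{eq:finalformofmu}, any representing $(\mu,V)$ is nonzero, so \eqref{eq:SOC2} applies. Third, even for the equivalence \eqref{eq:SOC2} $\Leftrightarrow$ \eqref{eq:defofHess} in the convex case, injectivity is not needed: you yourself observe that \eqref{eq:SOC2} is automatic whenever $V \ne 0$ (by convexity of $\loss$ and the Loewner bound), and for $V = 0$ one has $\Kb\eqclass{\mu}{0} = K\mu$, at which point the two conditions literally coincide. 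The case split on $V$ bypasses the injectivity question entirely, which is what the paper's argument does.
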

\begin{proof}
According to \cref{thm:twiceepiofG}, it suffices to show that
\begin{align} \label{eq:sschel2}
    (\Kb h,\nabla^2 \loss(K \Bar{u}) \Kb h)_Y   + G''( \eqclass{\bar{u}}{0}, \Bar{p}; h) >0 
\end{align}
for all~$h = \eqclass{\mu}{\nu} \neq 0$
as in \eqref{eq:finalformofmu}.
Together with the expression \eqref{eq:subderivative}
for the second subderivative,
this is precisely condition \eqref{eq:SOC2}.

It remains to consider the situation
with a convex $\loss$.
We have to prove the equivalency of \eqref{eq:SOC2}
and \eqref{eq:defofHess}.
It is clear that \eqref{eq:SOC2}
implies \eqref{eq:defofHess}.

Now, let \eqref{eq:defofHess} be satisfied.
For an~$h = \eqclass{\mu}{\sum_{j = 1}^N V_j \delta_{\bar x_j}} \ne 0$ with
$\mu \in \mathcal{C}$ and $V \in (\R^d)^N$,
we note that
\begin{align*}
	(\Kb h,\nabla^2 \loss(K \Bar{u}) \Kb h)_Y   + G''( \eqclass{\bar{u}}{0}, \Bar{p}; h) &\geq  (\Kb h,\nabla^2 \loss(K \Bar{u}) \Kb h)_Y - \sum_{j=1}^N  \left \lbrack \frac{1}{\bar{\lambda}_j} \nabla^2\bar{p}(\bar{x}_j) {V}_j^2 \right \rbrack \\ &
	\geq  (\Kb h,\nabla^2 \loss(K \Bar{u}) \Kb h)_Y +\theta \sum_{j=1}^N \abs{V_j}^2.
\end{align*}
Due to the convexity of $\loss$,
the first addend on the right-hand side is nonnegative.
Thus, if there exists~$j \in \set{1,\ldots,N}$ with~$V_j \neq 0$,
the right-hand side is positive.
Otherwise,
$V_j =0$ for all~$j \in \set{1,\ldots,N}$.
Hence, we can apply \eqref{eq:defofHess}
and, again,
obtain the positivity of the right-hand side.
This shows that \eqref{eq:SOC2}
holds.
\end{proof}
\section{Structural assumptions imply NDC} \label{sec:strucimplyNDC}
It remains to show that the structural assumption
$\ref{point:struc}$
implies the non-degeneracy condition~\eqref{eq:NDCmeas}.
We give two fundamentally different proofs for this fact, one based on the transport-based representation of the bounded Lipschitz norm, see \cref{lem:blnorm}, and another utilizing the abstract result of 
\cref{lem:for_NDC}.
\subsection{Proof based on \cref{lem:blnorm}}
In the following, let assumption~\ref{point:struc} hold but assume that~\eqref{eq:NDCmeas} does not. W.l.o.g., and recalling the definition of the lifted functional~$G$,~\eqref{def:extG}, the latter implies that
there exists sequences $\seq{t_k} \subset (0,\infty)$, $\seq{\mu_k} \subset \M$
with
\begin{equation}
		\label{eq:contNDC}
		\begin{aligned}
			&t_k \searrow 0, \; \eqclass{\mu_k}{0} \rightharpoonup^* 0, \;
			\codnorm{\eqclass{\mu_k}{0}}= 1,\; \text{and}\\
			&\qquad
			\lim_{k \rightarrow \infty}\frac{\alpha \mnorm{\Bar{u}+t_k \mu_k}-\alpha \mnorm{\Bar{u}}- t_k \langle \bar{p},  \mu_k \rangle }{t_k^2}
			=\lim_{k \rightarrow \infty}\frac{\alpha \mnorm{\Bar{u}+t_k \mu_k}-  \langle \bar{p}, \Bar{u}+t_k  \mu_k \rangle}{t_k^2} =0.
		\end{aligned}
	\end{equation}
We want to lead this to a contradiction. For this purpose, let~$r_0>0$ denote the radius from \cref{lem:ersatz_fuer_lemma_sieben_punkt_sechs}. The following lemma is a direct consequence of \cref{lem:ersatz_fuer_lemma_sieben_punkt_sechs}, the definition of~$\I_\pm$ as well as the continuity of~$\Bar{p}$.
\begin{lemma} \label{lem:sepnondeg}
Let Assumption~\ref{point:struc} hold and denote by~$r_0>0$ the radius from \cref{lem:ersatz_fuer_lemma_sieben_punkt_sechs}. Then there exists a compact set~$\I$ such that:
  \begin{enumerate}[label=(\arabic*)]
		\item\label{lem:sepnondeg:1}
			The sets
			$B_{r_0}(\bar x_j)$, $j = 1,\ldots, N$, and~$\I$ are pairwise disjoint.
		\item\label{lem:sepnondeg:2}
			There holds
			\begin{equation}
    (\I_+ \cup \I_-) \subset \I \qquad \text{as well as} \qquad |\Bar{p}(x)| \leq \alpha-\sigma \qquad \forall x \in \Omega \setminus \left( \I \cup  \bigcup^N_{j=1} B_{r_0} \right)
			\end{equation}
   for some~$\sigma>0$.
	\end{enumerate}
\end{lemma}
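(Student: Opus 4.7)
The plan is to leverage the quantitative upper bound $s_j\bar p(x)\le\alpha-\tfrac\theta 4|x-\bar x_j|^2$ from \cref{lem:ersatz_fuer_lemma_sieben_punkt_sechs} to deduce that within each closed ball $B_{r_0}(\bar x_j)$ the only point where $|\bar p|=\alpha$ is $\bar x_j$ itself. Since $\I_+\cup\I_-$ excludes $\A$ by definition, this already forces
\begin{equation*}
	(\I_+\cup\I_-)\cap B_{r_0}(\bar x_j)=\emptyset
	\qquad\forall j=1,\dots,N,
\end{equation*}
which is the key geometric input.

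Next I would verify that $\I_+\cup\I_-$ is compact. By continuity of $\bar p$, the set $\set{x\in\Omega \given |\bar p(x)|=\alpha}$ is closed in the compact space $\Omega$ and decomposes disjointly as $\A\sqcup(\I_+\cup\I_-)$. The previous observation shows that no $\bar x_j$ lies in the closure of $\I_+\cup\I_-$ (as $\bar x_j$ lies in the interior of $B_{r_0}(\bar x_j)$, which is disjoint from $\I_+\cup\I_-$). Hence $\I_+\cup\I_-$ is closed and therefore compact, and its distance $\rho>0$ to the closed set $\bigcup_{j=1}^N B_{r_0}(\bar x_j)$ is strictly positive. I then set
\begin{equation*}
	\I\coloneqq \set*{x\in\Omega \given \operatorname{dist}(x,\I_+\cup\I_-)\le\rho/3},
\end{equation*}
which is compact, contains $\I_+\cup\I_-$ in its interior, and satisfies $\operatorname{dist}(\I,\bigcup_j B_{r_0}(\bar x_j))\ge 2\rho/3>0$. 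This establishes \ref{lem:sepnondeg:1}.

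For \ref{lem:sepnondeg:2}, I would consider the open set $F\coloneqq\Omega\setminus(\I\cup\bigcup_j B_{r_0}(\bar x_j))$ and its closure $\bar F$ in $\Omega$. By construction, each $\bar x_j$ lies in the interior of $B_{r_0}(\bar x_j)$ and each point of $\I_+\cup\I_-$ lies in the interior of $\I$, so every element of $\A\cup\I_+\cup\I_-=\set{|\bar p|=\alpha}$ has an open neighborhood disjoint from $F$ and hence is missed by $\bar F$. Thus $|\bar p|<\alpha$ on the compact set $\bar F$, and continuity combined with compactness yields $\sigma\coloneqq\alpha-\max_{\bar F}|\bar p|>0$, giving the required uniform bound on $F\subset\bar F$.

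The only real subtlety — and the point where \cref{lem:ersatz_fuer_lemma_sieben_punkt_sechs} is indispensable — is the compactness of $\I_+\cup\I_-$: a priori this set is a difference of a closed set and a finite set and need not be closed. The quadratic separation provided by the earlier lemma is exactly what rules out accumulation of $\I_\pm$ toward the support points $\bar x_j$, so that a uniform positive distance $\rho$ is available and the construction of $\I$ becomes quantitative rather than merely qualitative.
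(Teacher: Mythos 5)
Your proof is correct, and it fills in precisely the details the paper leaves implicit (the paper merely calls the lemma ``a direct consequence of \cref{lem:ersatz_fuer_lemma_sieben_punkt_sechs}, the definition of~$\I_\pm$ as well as the continuity of~$\Bar{p}$'' and gives no argument). You correctly identify the one genuinely nontrivial point: showing $\I_+\cup\I_-$ is closed, which is not automatic because it is a closed set minus a finite set. The quadratic bound $s_j\bar p(x)\le\alpha-\tfrac{\theta}{4}\abs{x-\bar x_j}^2$ on $B_{r_0}(\bar x_j)$ rules out accumulation of $\I_\pm$ at the support points, giving compactness and the positive distance $\rho$; the $\rho/3$-thickening then produces a compact $\I$ disjoint from the balls, and the closure-and-compactness argument yields $\sigma>0$. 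Two minor remarks that you may want to make explicit in a write-up: (i) if $\I_+\cup\I_-=\emptyset$, the construction degenerates to $\I=\emptyset$ and the argument still goes through, and (ii) the fact that each $\bar x_j$ has an open neighborhood \emph{in $\Omega$} inside $B_{r_0}(\bar x_j)$ uses $B_{r_0}(\bar x_j)\subset\Omega$ from \cref{lem:ersatz_fuer_lemma_sieben_punkt_sechs}\ref{lem:ersatz_fuer_lemma_sieben_punkt_sechs:1}.
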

We define by
\begin{align*}
 \mu_{k,\I} \coloneqq \mu_k( \cdot \cap \I )  , \quad \mu_{k,j} \coloneqq \mu_k( \cdot \cap B_{r_0}(\Bar{x}_j) ),
\end{align*}
the restrictions of~$\mu_k$ to~$\I$ and~$B_{r_0}(\Bar{x}_j)$,~$j=1,\dots,N$, respectively, as well as
\begin{align*}
    \Tilde{\mu}_k \coloneqq \mu_k-\mu_{k,\I} -\sum^N_{j=1} \mu_{k,j} \qquad \text{and} \qquad \mu^\bot_{k,j} \coloneqq \mu_{k,j}- \mu_{k,j}(\{\Bar{x}_j\}) \delta_{\Bar{x}_j}.  
\end{align*}
Furthermore, we denote by~$\mu^{\bot,\pm}_{k,j}$ the positive and negative part of~$\mu^\bot_{k,j}$, respectively, i.e., there holds
\begin{align*}
    \mu^\bot_{k,j}=\mu^{\bot,+}_{k,j}-\mu^{\bot,-}_{k,j}, \quad \mnorm{\mu^\bot_{k,j}}=\mnorm{\mu^{\bot,+}_{k,j}}+\mnorm{\mu^{\bot,-}_{k,j}},
\end{align*}
and recall that~$s_j=\sgn(\Bar{p}(\bar{x}_j))$,~$j=1,\dots,N$. Finally, we abbreviate
\begin{align*}
	D^k_1= \sum_{j=1}^N \left \lbrack \alpha \abs{\Bar{\lambda}_j +t_k \mu_{k,j}(\{\Bar{x}_j\}) }- s_j \alpha (\Bar{\lambda}_j +t_k \mu_{k,j}(\{\Bar{x}_j\})) \right \rbrack, ~  D^k_2=  \sum_{j=1}^N \left \lbrack \alpha \mnorm{\mu^{\bot,-s_j}_{k,j}}+s_j\langle \Bar{p}, \mu^{\bot,-s_j}_{k,j} \rangle  \right  \rbrack 
\end{align*}
as well as
\begin{align*}
	D^k_3=  \alpha \mnorm{\mu_{k,\I}}- \langle \Bar{p}, \mu_{k,\I} \rangle  , ~ D^k_4= \alpha \mnorm{\Tilde{\mu}_k}- \langle \Bar{p}, \Tilde{\mu}_k \rangle, \quad D^k_5= \sum_{j=1}^N \left \lbrack \alpha \mnorm{\mu^{\bot,s_j}_{k,j}}-s_j\langle \Bar{p}, \mu^{\bot,s_j}_{k,j} \rangle  \right \rbrack.
\end{align*}
The following estimates are imminent.
\begin{lemma} \label{lem:lowerbound}
    Let Assumption~\ref{point:struc} hold and let~$\seq{\mu_k}$,~$\seq{t_k}$ satisfy~\eqref{eq:contNDC}. Then there holds
\begin{align*}
    t^{-2}_k (\alpha \mnorm{\Bar{u}+t_k \mu_k}-  \langle \bar{p}, \Bar{u}+t_k  \mu_k \rangle) = t^{-2}_k  D^k_1+ t^{-1}_k \sum^5_{i=2} D^k_i   
\end{align*} 
as well as
\begin{align*}
	D^k_1 &\geq -2 \alpha \sum_{j=1}^N \min\{ s_j (\Bar{\lambda}_j +t_k \mu_{k,j}(\{\Bar{x}_j\})),0\} \geq 0,
	&
	D^k_2 &\geq \alpha \sum_{j=1}^N  \mnorm{\mu^{\bot,-s_j}_{k,j}} \geq 0,
	\\
	D^k_4 &\geq \sigma \mnorm{\Tilde{\mu}_k} \geq 0,
	&
	D^k_5 &\geq \frac{\theta}{4} \sum_{j=1}^N \int_{\Omega} \abs{\Bar{x}_j-x}^2 \de \mu^{\bot,s_j}_{k,j} \geq 0.
\end{align*}
\end{lemma}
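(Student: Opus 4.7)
The plan is to begin by exploiting that the decomposition
$\mu_k = \mu_{k,\I} + \sum_{j=1}^N \mu_{k,j} + \tilde\mu_k$
is mutually singular, since $\I$ and the balls $B_{r_0}(\bar x_j)$
from \cref{lem:sepnondeg} are pairwise disjoint.
Writing $\mu_{k,j} = \mu_{k,j}(\{\bar x_j\})\delta_{\bar x_j} + \mu^{\bot,+}_{k,j} - \mu^{\bot,-}_{k,j}$,
the measure $\bar u + t_k \mu_k$ admits a mutually singular decomposition into
the atoms $(\bar\lambda_j + t_k \mu_{k,j}(\{\bar x_j\}))\delta_{\bar x_j}$
and the parts $t_k \mu^{\bot,\pm}_{k,j}$, $t_k \mu_{k,\I}$, $t_k \tilde\mu_k$
which live on disjoint Borel sets. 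Hence $\mnorm{\bar u+t_k\mu_k}$ splits as a sum
of the moduli/masses of these pieces, and $\dual{\bar p}{\bar u+t_k\mu_k}$ splits correspondingly
using $\bar p(\bar x_j)= s_j\alpha$. Rearranging the resulting expression
and grouping the atomic contributions into $D^k_1$, the positive/negative parts away from $\bar x_j$
into $D^k_2$ and $D^k_5$, the $\I$-contribution into $D^k_3$, and the remainder into $D^k_4$,
yields exactly the claimed identity.

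For the bound on $D^k_1$, the elementary relation $\alpha|x| - s\alpha x = -2\alpha\min\{sx,0\}$
for $s\in\{-1,+1\}$ gives both the equality and the nonnegativity.
For $D^k_2$, I would observe that
$D^k_{2,j} = \int (\alpha + s_j\bar p)\de \mu^{\bot,-s_j}_{k,j}$ when $s_j = +1$
and $D^k_{2,j} = \int (\alpha - s_j\bar p)\de \mu^{\bot,-s_j}_{k,j}$ when $s_j = -1$,
which in both cases equals $\int(\alpha + s_j \bar p\cdot(-1))\de\mu^{\bot,-s_j}_{k,j}$;
the key point is that $\mu^{\bot,-s_j}_{k,j}\ge 0$ is supported in $B_{r_0}(\bar x_j)$,
where \cref{lem:ersatz_fuer_lemma_sieben_punkt_sechs} gives $s_j\bar p \ge \alpha/2 \ge 0$,
so the $s_j\langle\bar p,\cdot\rangle$ contribution is nonnegative and the integrand is at least $\alpha$.

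The bound on $D^k_4$ is a direct consequence of \cref{lem:sepnondeg}\ref{lem:sepnondeg:2}:
$\tilde\mu_k$ lives where $|\bar p|\le\alpha-\sigma$, so
$|\dual{\bar p}{\tilde\mu_k}| \le (\alpha-\sigma)\mnorm{\tilde\mu_k}$
and $D^k_4 \ge \sigma\mnorm{\tilde\mu_k}$. Finally, for $D^k_5$, I would rewrite
$D^k_{5,j} = \int (\alpha - s_j\bar p(x))\de\mu^{\bot,s_j}_{k,j}(x)$ and apply the quadratic
upper bound $s_j\bar p(x)\le \alpha - (\theta/4)|x-\bar x_j|^2$ on $B_{r_0}(\bar x_j)$
from \cref{lem:ersatz_fuer_lemma_sieben_punkt_sechs}. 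There is no real obstacle here beyond
the bookkeeping of signs and supports; the technical heart of the matter is simply to keep
the Jordan decompositions and the role of $s_j$ consistent throughout.
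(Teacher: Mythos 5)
Your proposal is correct and takes essentially the same route as the paper: split $\bar u + t_k\mu_k$ along the mutually singular pieces living on $\{\bar x_j\}$, $B_{r_0}(\bar x_j)\setminus\{\bar x_j\}$, $\I$, and the remainder, so that both the Radon norm and the duality pairing decompose additively, and then bound each $D^k_i$ from below using \eqref{eq:hess_p_psd_locally_around_bar_x} on $B_{r_0}(\bar x_j)$ and \cref{lem:sepnondeg} away from these balls. One small slip: in the discussion of $D^k_2$, the intermediate case-split formulas $\int(\alpha + s_j\bar p)\de\mu^{\bot,-s_j}_{k,j}$ versus $\int(\alpha - s_j\bar p)\de\mu^{\bot,-s_j}_{k,j}$ are garbled and the claimed common expression $\int(\alpha + s_j\bar p\cdot(-1))\de\mu^{\bot,-s_j}_{k,j}$ is inconsistent with the $s_j=+1$ case; the correct expression in both cases is simply $\int(\alpha + s_j\bar p)\de\mu^{\bot,-s_j}_{k,j}$ (since $\mnorm{\mu^{\bot,-s_j}_{k,j}} = \int 1\de\mu^{\bot,-s_j}_{k,j}$), and the conclusion you draw from $s_j\bar p\ge\alpha/2\ge 0$ is nevertheless valid. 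The rest of the argument matches the paper exactly, including the elementary identity for $D^k_1$ and the use of the quadratic upper bound on $s_j\bar p$ for $D^k_5$.
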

\begin{proof}
We derive the lower bound
\begin{align*}
    t^{-2}_k (\alpha \mnorm{\Bar{u}+t_k \mu_k}-  \langle \bar{p}, \Bar{u}+t_k  \mu_k \rangle) \geq t^{-2}_k  D^k_1+ t^{-1}_k \sum^5_{i=2} D^k_i   
\end{align*}
noting that there holds
\begin{align*}
	\mnorm{\mu_k}=\mnorm{\Tilde{\mu}_k}+ \sum_{j=1}^N \left \lbrack \abs{\mu_{k,j}(\{\bar{x}_j)\}}+ \mnorm{\mu^{\bot,s_j}_{k,j}}+ \mnorm{\mu^{\bot,-s_j}_{k,j}}  \right \rbrack+ \mnorm{\mu_{k,\I}}.
\end{align*}
The lower estimates on~$D^k_i$ follow immediately noting that
\begin{align*}
	D^k_2 =   \sum_{j=1}^N \left \lbrack  \alpha \mnorm{\mu^{\bot,-s_j}_{k,j}}+\langle \abs{\Bar{p}}, \mu^{\bot,-s_j}_{k,j} \rangle \right \rbrack \geq \alpha \sum_{j=1}^N  \mnorm{\mu^{\bot,-s_j}_{k,j}}, \quad D^k_4= \alpha \mnorm{\Tilde{\mu}_k}- \langle \Bar{p}, \Tilde{\mu}_k \rangle \geq \sigma \mnorm{\Tilde{\mu}_k}   
\end{align*}
as well as
\begin{align*}
	D^k_5 &= \sum_{j=1}^N \left \lbrack \alpha \mnorm{\mu^{\bot,s_j}_{k,j}}-s_j\langle \Bar{p}, \mu^{\bot,s_j}_{k,j} \rangle  \right \rbrack = \sum_{j=1}^N \left \lbrack \alpha \mnorm{\mu^{\bot,s_j}_{k,j}}-\langle \abs{\Bar{p}}, \mu^{\bot,s_j}_{k,j} \rangle  \right \rbrack  \geq \frac{\theta}{4} \sum_{j=1}^N \int_{\Omega} \abs{\Bar{x}_j-x}^2 \de \mu^{\bot,s_j}_{k,j}
\end{align*}
due to \cref{lem:ersatz_fuer_lemma_sieben_punkt_sechs}.
\end{proof}
Using this auxiliary lemma, we argue that, for increasing~$k$,~$\mu_k$ strongly approximates the surrogate sequence
\begin{align*}
	\widehat{\mu}_k := \sum_{j=1}^N \left\lbrack -s_j \mu^{\bot,s_j}_{k,j}(\Omega) \delta_{\Bar{x}_j} +  s_j \mu^{\bot,s_j}_{k,j} \right \rbrack
\end{align*}
in~$\M$.
\begin{proposition} \label{lem:convtozero}
    Let assumption~\ref{point:struc} hold and let~$\seq{\mu_k}$,~$\seq{t_k}$ satisfy~\eqref{eq:contNDC}. Then there holds
\begin{align} \label{eq:convoffix}
	\lim_{k\rightarrow \infty} \left \lbrack \mnorm{\Tilde{\mu}_k}+ \mnorm{{\mu}_{k,\I}}+  \sum_{j=1}^N \left\lbrack \min\{ s_j (\Bar{\lambda}_j +t_k \mu_{k,j}(\{\Bar{x}_j\})),0\}+ \mnorm{\mu^{\bot,-s_j}_{k,j}} \right \rbrack  \right \rbrack=0
\end{align}
as well as
\begin{align} \label{eq:convoftrans}
	\lim_{k \rightarrow \infty} \left\lbrack \sum_{j=1}^N  \abs{s_j \mu_{k,j}(\{\Bar{x}_j\})+ \mu^{\bot,s_j}_{k,j}(\Omega)} \right \rbrack=0.
\end{align}
In particular, we have
\begin{align*}
\lim_{k \rightarrow \infty} \left\lbrack \codnorm{\eqclass{\mu_k-\widehat{\mu}_k}{0}}+ |\codnorm{\eqclass{\widehat{\mu}_k}{0}}-1|   \right \rbrack =0
\end{align*}
and the sequence~$\seq[\big]{\sum_{j=1}^N\mnorm{\mu^{\bot,s_j}_{k,j}}}$ is unbounded.
\end{proposition}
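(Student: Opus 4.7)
The plan is to leverage the non-negativity of every $D_i^k$ in \cref{lem:lowerbound}: combining that lemma with \eqref{eq:contNDC} gives
\begin{equation*}
  t_k^{-2} D_1^k + t_k^{-1} \sum_{i=2}^5 D_i^k \to 0,
\end{equation*}
so that $D_1^k = o(t_k^2)$ and $D_i^k = o(t_k)$ for $i \in \{2,3,4,5\}$ individually. The explicit lower bounds in \cref{lem:lowerbound} then immediately yield $\mnorm{\Tilde\mu_k} \to 0$, $\mnorm{\mu^{\bot,-s_j}_{k,j}} \to 0$, and the concentration estimate $\sum_{j=1}^N \int_\Omega |x - \bar{x}_j|^2 \de \mu^{\bot,s_j}_{k,j} = o(t_k)$. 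Since $s_j \bar\lambda_j = |\bar\lambda_j|>0$, the minima in the $D_1^k$ bound actually vanish for all $k$ large enough. I will choose $\I := \I_+ \cup \I_-$ in \cref{lem:sepnondeg}, which is admissible since, by \cref{lem:ersatz_fuer_lemma_sieben_punkt_sechs}, $|\bar p|<\alpha$ strictly on $B_{r_0}(\bar{x}_j) \setminus \{\bar{x}_j\}$ and, by compactness, $|\bar p| \leq \alpha - \sigma$ outside $\I \cup \bigcup_j B_{r_0}(\bar{x}_j)$. The identity $\bar p = \pm\alpha$ on $\I_\pm$ reduces $D_3^k$ to $2\alpha$ times the total ``wrong-sign'' mass on $\I$; hence the negative part of $\mu_{k,\I}|_{\I_+}$ and the positive part of $\mu_{k,\I}|_{\I_-}$ also go to zero in total mass.

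The ``correct-sign'' remainder of $\mu_{k,\I}$ contributes zero to $D_3^k$, so its vanishing must be extracted from the weak* convergence $\eqclass{\mu_k}{0} \weakstar 0$. Since $\I_+$, $\I_-$ and $\bigcup_{j=1}^N B_{r_0}(\bar{x}_j)$ are pairwise disjoint closed subsets of $\Omega$, mollifying a Urysohn-type construction produces $\varphi_\pm \in \Co$ with $\varphi_\pm = 1$ on $\I_\pm$, $\varphi_\pm = 0$ on $\I_\mp \cup \bigcup_j B_{r_0}(\bar{x}_j)$ and $\cnorm{\varphi_\pm}\leq 1$. Inserting them into $\ddual{\varphi_\pm}{\eqclass{\mu_k}{0}} \to 0$ and discarding the contributions of $\Tilde\mu_k$ (norm-small) and the $\mu_{k,j}$ (annihilated by $\varphi_\pm$ on $B_{r_0}(\bar{x}_j)$) leaves $\mu_{k,\I}(\I_\pm) \to 0$, which combined with the already-established vanishing of the wrong-sign masses forces the correct-sign masses to vanish; this establishes $\mnorm{\mu_{k,\I}} \to 0$ and completes \eqref{eq:convoffix}. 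For \eqref{eq:convoftrans}, I test against $\varphi_j \in \Co$ with $\varphi_j = 1$ on $B_{r_0/2}(\bar{x}_j)$ and $\varphi_j = 0$ outside $B_{r_0}(\bar{x}_j)$; the weak* convergence then gives $\mu_{k,j}(\Omega) \to 0$, and subtracting $\mu^{\bot,-s_j}_{k,j}(\Omega) \to 0$ from $\mu_{k,j}(\Omega) = \mu_{k,j}(\{\bar{x}_j\}) + \mu^{\bot,s_j}_{k,j}(\Omega) - \mu^{\bot,-s_j}_{k,j}(\Omega)$ yields the claim.

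A direct expansion gives
\begin{equation*}
  \mu_k - \widehat\mu_k = \Tilde\mu_k + \mu_{k,\I} + \sum_{j=1}^N \bracks*{\bigl(\mu_{k,j}(\{\bar{x}_j\}) + s_j\mu^{\bot,s_j}_{k,j}(\Omega)\bigr)\delta_{\bar{x}_j} - s_j \mu^{\bot,-s_j}_{k,j}},
\end{equation*}
and all these terms vanish in $\mnorm{\cdot}$ by the previous steps; hence $\codnorm{\eqclass{\mu_k - \widehat\mu_k}{0}} \to 0$ by \cref{lem:equivalence_of_norms_co}, and $\codnorm{\eqclass{\widehat\mu_k}{0}} \to 1$ follows from $\codnorm{\eqclass{\mu_k}{0}} = 1$. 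Finally, for the unboundedness claim I argue by contradiction: assuming $\sum_j \mnorm{\mu^{\bot,s_j}_{k,j}} \leq M$, a first-order Taylor expansion on the convex ball $B_{r_0}(\bar{x}_j)$ combined with Cauchy–Schwarz yields, for every $\varphi \in \Co$ with $\conorm{\varphi} \leq 1$,
\begin{equation*}
  \abs*{\ddual{\varphi}{\eqclass{\widehat\mu_k}{0}}}
  = \abs*{\sum_{j=1}^N s_j \int_\Omega \bigl(\varphi(x)-\varphi(\bar{x}_j)\bigr) \de \mu^{\bot,s_j}_{k,j}(x)}
  \leq \sum_{j=1}^N \sqrt{\mu^{\bot,s_j}_{k,j}(\Omega)}\, \sqrt{\int_\Omega |x - \bar{x}_j|^2 \de \mu^{\bot,s_j}_{k,j}} = o(\sqrt{t_k}),
\end{equation*}
so $\codnorm{\eqclass{\widehat\mu_k}{0}} \to 0$, contradicting $\to 1$. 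The main technical obstacle lies in the second paragraph: the $C^1$-smooth separation of the closed sets $\I_+$, $\I_-$, $\A$ requires standard but delicate mollification, and the weak* argument only works because the other contributions to $\ddual{\varphi_\pm}{\eqclass{\mu_k}{0}}$ have already been shown to vanish in norm.
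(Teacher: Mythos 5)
Your first paragraph (extracting the vanishing of $D^k_1,\dots,D^k_5$, hence of $\mnorm{\Tilde\mu_k}$, the $\min$ terms, $\mnorm{\mu^{\bot,-s_j}_{k,j}}$, and the concentration estimate $\sum_j \int|x-\bar x_j|^2\de\mu^{\bot,s_j}_{k,j}=o(t_k)$) matches the paper's opening move exactly. The treatment of $\mu_{k,\I}$, however, contains a genuine error. You set $\I := \I_+ \cup \I_-$ and claim ``by compactness, $|\bar p|\le\alpha-\sigma$ outside $\I\cup\bigcup_j B_{r_0}(\bar x_j)$.'' The set $\Omega\setminus\bigl(\I\cup\bigcup_j B_{r_0}(\bar x_j)\bigr)$ is open in $\Omega$, hence not compact; since $\bar p$ is continuous, $|\bar p|$ will in general approach $\alpha$ as one approaches $\I_\pm$ from outside, so no uniform $\sigma$ exists. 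This is precisely why \cref{lem:sepnondeg} introduces a strictly larger compact $\I\supsetneq\I_+\cup\I_-$ (think of a sublevel set $\{|\bar p|\ge\alpha-\sigma_0\}\setminus\bigcup_j\Intr B_{r_0}(\bar x_j)$). Your choice breaks the estimate $D^k_4\ge\sigma\mnorm{\Tilde\mu_k}$ from \cref{lem:lowerbound}, so you cannot actually deduce $\mnorm{\Tilde\mu_k}\to 0$ in the first place --- your first paragraph silently assumes the paper's $\I$ while your second paragraph overrides it. Conversely, with the correct $\I$, your ``wrong-sign/correct-sign'' split of $\mu_{k,\I}$ collapses, because $\bar p$ is not identically $\pm\alpha$ on $\I\setminus(\I_+\cup\I_-)$. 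The paper avoids all of this by testing $\bar p\varphi_\I$ directly against $\eqclass{\mu_k}{0}$: this yields $\langle\bar p,\mu_{k,\I}\rangle\to 0$, which combined with $D^k_3=\alpha\mnorm{\mu_{k,\I}}-\langle\bar p,\mu_{k,\I}\rangle\to 0$ gives $\mnorm{\mu_{k,\I}}\to 0$ without any sign decomposition. You should adopt that route.

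There is also a smaller slip in the argument for \eqref{eq:convoftrans}: you test against $\varphi_j$ that equals $1$ only on $B_{r_0/2}(\bar x_j)$. Since $\mu_{k,j}=\mu_k(\cdot\cap B_{r_0}(\bar x_j))$ is supported on the full ball, testing such a $\varphi_j$ gives $\int\varphi_j\de\mu_{k,j}\to 0$, not $\mu_{k,j}(\Omega)\to 0$. The $\varphi_j$ must equal $1$ on all of $B_{r_0}(\bar x_j)$ (and vanish on a slightly larger neighborhood disjoint from the other balls and from $\I$); this is what the paper uses, and it is an easy fix. Finally, your argument for the unboundedness of $\sum_j\mnorm{\mu^{\bot,s_j}_{k,j}}$ is correct and is a genuinely different, more quantitative route than the paper's terse compactness argument: writing $\ddual{\varphi}{\eqclass{\widehat\mu_k}{0}}=\sum_j s_j\int(\varphi-\varphi(\bar x_j))\de\mu^{\bot,s_j}_{k,j}$ and bounding via $|\varphi(x)-\varphi(\bar x_j)|\le|x-\bar x_j|$ plus Cauchy--Schwarz and the concentration estimate gives $\codnorm{\eqclass{\widehat\mu_k}{0}}\to 0$ under the boundedness hypothesis, contradicting $\codnorm{\eqclass{\widehat\mu_k}{0}}\to 1$. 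This is a cleaner and more explicit argument than the one in the paper.
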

\begin{proof}
    We immediately conclude 
    \begin{align*}
			\lim_{k\rightarrow \infty} \left \lbrack \mnorm{\Tilde{\mu}_k}+  \sum_{j=1}^N \left\lbrack \min\{ s_j (\Bar{\lambda}_j +t_k \mu_{k,j}(\{\Bar{x}_j\})),0\}+ \mnorm{\mu^{\bot,-s_j}_{k,j}} \right \rbrack  \right \rbrack=0
    \end{align*}
    from the lower bounds on~$D^k_1$,~$D^k_2$ and~$D^k_4$ in \cref{lem:lowerbound}, respectively. Moreover, due to~\eqref{eq:contNDC} as well as \cref{lem:lowerbound}, we have
    \begin{align*}
      \lim_{k \rightarrow \infty} D^k_3=  \lim_{k \rightarrow \infty} \left\lbrack \alpha \mnorm{\mu_{k,\I}}-\langle \Bar{p} , \mu_{k,\I} \rangle \right \rbrack=0 .
    \end{align*}
    Let~$\{\varphi_j\}^N_{j=1} \cup \{\varphi_\I \} \subset \Co $ denote a family of smooth Urysohn functions subordinate to~$B_{r_0}(\Bar{x}_j)$,~$j=1,\dots, N$, and~$\I$.
   Then we have
    \begin{align*}
       0=\lim_{k \rightarrow \infty } \ddual{\Bar{p}\varphi_\I}{\eqclass{\mu_k}{0}}
			 =\lim_{k \rightarrow \infty } \left\lbrack\langle \Bar{p}, \mu_{k,\I} \rangle+\langle \Bar{p} \varphi_\I, \Tilde{\mu}_k \rangle \right \rbrack
			 = \lim_{k \rightarrow \infty }\langle \Bar{p}, \mu_{k,\I} \rangle.
       \end{align*}
    and consequently~$\lim_{k \rightarrow \infty } \mnorm{\mu_{k,\I}}=0$. Similarly, observe that
    \begin{align*}
     0= \lim_{k \rightarrow \infty}    s_j\ddual{ \varphi_j}{\eqclass{\mu_k}{0}} = \lim_{k \rightarrow \infty} \left\lbrack s_j \mu_{k,j}(\{\Bar{x}_j\})+ \mu^{\bot,s_j}_{k,j}(\Omega)- \mu^{\bot,-s_j}_{k,j}(\Omega)+ s_j \langle \varphi_j  , \Tilde{\mu}_k \rangle \right \rbrack
    \end{align*}
    for all~$j=1,\dots,N$.
    Using~\eqref{eq:convoffix}, we conclude~\eqref{eq:convoftrans}.
        The statements
\begin{align*}
\lim_{k \rightarrow \infty} \left\lbrack \codnorm{\eqclass{\mu_k-\widehat{\mu}_k}{0}}+ |\codnorm{\eqclass{\widehat{\mu}_k}{0}}-1|   \right \rbrack =0
\end{align*}
    immediately follow from~\eqref{eq:convoffix} and~\eqref{eq:convoftrans}, respectively, $\codnorm{\eqclass{{\mu}_k}{0}}=1$, as well as noting that 
		\begin{align*}
			\mu_k- \widehat{\mu}_k
			=
			\Tilde{\mu}_k
			+ \mu_{k,\I}
			+ \sum^N_{j=1}
			\left \lbrack
			-s_j \mu^{\bot, -s_j}_{k,j}+ (\mu_{k,j}(\{\Bar{x}_j\})+s_j \mu^{\bot,s_j}_{k,j}(\Omega)) \delta_{\Bar{x}_j}
				\right \rbrack
			\end{align*}
    and thus
    \begin{align*}
			\lim_{k \rightarrow }   \mnorm{\mu_k- \widehat{\mu}_k} \leq \lim_{k \rightarrow \infty} \left\lbrack \mnorm{\Tilde{\mu}_k}+ \mnorm{\mu_{k,\I}}+ \sum^N_{j=1} \left \lbrack \mnorm{ \mu^{\bot, -s_j}_{k,j}}+ |\mu_{k,j}(\{\Bar{x}_j\})+s_j \mu^{\bot,s_j}_{k,j}(\Omega)|  \right \rbrack \right \rbrack =0
    \end{align*}
		Finally, assume that $\sum_{j=1}^N\mnorm{\mu^{\bot,s_j}_{k,j}}$ is bounded. In view of~\eqref{eq:convoffix} and~\eqref{eq:convoftrans}, we then conclude that~$\seq{\mu_k}$ is bounded in~$\Moc$. Thus we have~$\mu_k \rightharpoonup^* 0$ in~$\Moc$. By a compactness argument, this contradicts~$\codnorm{\eqclass{\mu_k}{0}}=1$.
\end{proof} 
Thus, w.l.o.g, we restrict ourselves to sequences~$\seq{\mu_k}$ with~$\sum_{j=1}^N\mnorm{\mu^{\bot,s_j}_{k,j}}>0$ in the following. The next corollary shows that the latter grows at most at a rate of~$t^{-1}_k$.
\begin{corollary} \label{coroll:estoftrans}
	Let Assumption~\ref{point:struc} hold and let~$\seq{\mu_k}$,~$\seq{t_k}$ satisfy~\eqref{eq:contNDC}. For all~$k \in \N$ large enough as well as all~$j \in \set{1,\ldots,N}$, there holds
\begin{align*}
    \frac{\abs{\bar{\lambda}_j}+1/2}{t_k} \geq -s_j \mu_{k,j}(\{\Bar{x}_j\}) \quad \text{as well as} \quad \mnorm{\mu^{\bot,s_j}_{k,j}} \leq 1/2+ t^{-1}_k(\abs{\bar{\lambda}_j}+1/2).
\end{align*}
In particular, this implies
\begin{align*}
	\sum_{j=1}^N \mnorm{\mu^{\bot,s_j}_{k,j}} \leq N/2+ t^{-1}_k\left(\sum_{j=1}^N\abs{\bar{\lambda}_j}+N/2 \right) .
\end{align*}
\end{corollary}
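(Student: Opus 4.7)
The plan is to directly extract the two claimed pointwise estimates from the two convergences established in \cref{lem:convtozero}, and then sum in $j$.

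First I would use the first convergence in \cref{lem:convtozero}, namely
\begin{equation*}
\lim_{k\to\infty}\sum_{j=1}^N \min\set{s_j(\bar{\lambda}_j+t_k\mu_{k,j}(\set{\bar x_j})),0}=0.
\end{equation*}
Since each summand is nonpositive, this forces $\min\set{s_j(\bar{\lambda}_j+t_k\mu_{k,j}(\set{\bar x_j})),0}\ge -1/2$ for all $k$ sufficiently large and all $j$. Rearranging gives $s_j(\bar{\lambda}_j+t_k\mu_{k,j}(\set{\bar x_j}))\ge -1/2$, which after isolating $\mu_{k,j}(\set{\bar x_j})$ and using $s_j\bar{\lambda}_j\le\abs{\bar{\lambda}_j}$ yields the first stated inequality
\begin{equation*}
-s_j \mu_{k,j}(\set{\bar x_j}) \le \frac{\abs{\bar{\lambda}_j}+1/2}{t_k}.
\end{equation*}

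Next I would invoke the second convergence \eqref{eq:convoftrans} from \cref{lem:convtozero}. For $k$ large enough and each $j$, this gives $\abs{s_j\mu_{k,j}(\set{\bar x_j})+\mu^{\bot,s_j}_{k,j}(\Omega)}\le 1/2$. Since $\mu^{\bot,s_j}_{k,j}\ge 0$ is a positive measure, its total variation norm coincides with its total mass $\mu^{\bot,s_j}_{k,j}(\Omega)$. A triangle inequality then produces
\begin{equation*}
\mnorm{\mu^{\bot,s_j}_{k,j}}
=
\mu^{\bot,s_j}_{k,j}(\Omega)
\le
1/2+\abs{s_j\mu_{k,j}(\set{\bar x_j})}
=
1/2 + \abs{\mu_{k,j}(\set{\bar x_j})}.
\end{equation*}
Plugging in the first estimate (noting $\mu_{k,j}(\set{\bar x_j})$ is negative in the relevant sign, or otherwise bounded directly since $s_j\mu_{k,j}(\set{\bar x_j})\ge 0$ would make the bound trivial via the preceding display) delivers the second claimed inequality.

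Finally, the ``In particular'' statement is obtained by summing the second inequality over $j=1,\dots,N$. No obstacle is expected in this argument: the whole corollary is an elementary quantitative sharpening of the two qualitative limits proved in \cref{lem:convtozero}, and the only subtlety is remembering that $\mu^{\bot,s_j}_{k,j}$ is a nonnegative measure so that its Radon norm equals $\mu^{\bot,s_j}_{k,j}(\Omega)$.
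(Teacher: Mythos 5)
Your proof is correct and follows essentially the same route as the paper: both pointwise estimates are read off directly from \eqref{eq:convoffix} and \eqref{eq:convoftrans} by elementary rearrangement, and the summed bound follows trivially. The one cosmetic difference is that the paper bounds $\mu^{\bot,s_j}_{k,j}(\Omega) \le \abs{s_j\mu_{k,j}(\{\bar x_j\})+\mu^{\bot,s_j}_{k,j}(\Omega)} - s_j\mu_{k,j}(\{\bar x_j\})$ in one step, which feeds directly into the first estimate and sidesteps the sign-dependent case distinction that your use of the full triangle inequality $a\le\abs{a+b}+\abs{b}$ forces you to make.
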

\begin{proof}
 According to~\eqref{eq:convoffix} and~\eqref{eq:convoftrans}, there holds
 \begin{align*}
\mnorm{\mu^{\bot,s_j}_{k,j}}=\mu^{\bot,s_j}_{k,j}(\Omega) &\leq \abs{s_j \mu_{k,j}(\{\Bar{x}_j\})+ \mu^{\bot,s_j}_{k,j}(\Omega) }-s_j \mu_{k,j}(\{\Bar{x}_j\}) \leq 1/2 -s_j \mu_{k,j}(\{\Bar{x}_j\}) 
 \end{align*}
 as well as
 \begin{align*}
      s_j (\Bar{\lambda}_j +t_k \mu_{k,j}(\{\Bar{x}_j\}))=\abs{\Bar{\lambda}_j}+s_j t_k \mu_{k,j}(\{\Bar{x}_j\}) \geq -1/2
 \end{align*}
 for all~$k\in\N$ large enough. Rearranging yields the desired results.
\end{proof}
Finally, we argue that the lower bound on~$D^k_5$ in \cref{lem:lowerbound} decreases at most at a linear rate.
\begin{corollary} \label{coroll:lowerboundonD4}
   Let Assumption~\ref{point:struc} hold and let~$\seq{\mu_k}$,~$\seq{t_k}$ satisfy~\eqref{eq:contNDC}. Moreover, let~$c_1>0$ be such that
   \begin{align*}
       \blnorm{\mu} \geq c_1 \codnorm{\eqclass{\mu}{0}} \quad \forall \mu \in \M.
   \end{align*}
   For all~$k \in \N$ large enough, there holds
    \begin{align*}
			\frac{c_1}{2}    \left(\sum_{j=1}^N \mnorm{\mu^{\bot,s_j}_{k,j}}\right)^{-1} \leq \sum_{j=1}^N \int_{\Omega} \abs{\Bar{x}_j-x}^2 \de \mu^{\bot,s_j}_{k,j}
    \end{align*}
\end{corollary}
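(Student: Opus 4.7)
The plan is to reduce the desired inequality to a lower bound on the bounded Lipschitz norm of the surrogate measure $\widehat{\mu}_k$ and then apply two successive Cauchy--Schwarz estimates.

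First I would use Proposition~6.3 (\cref{lem:convtozero}). Since $\codnorm{\eqclass{\widehat{\mu}_k}{0}} \to 1$, for $k$ large enough we have $\codnorm{\eqclass{\widehat{\mu}_k}{0}} \ge 1/2$, and by the definition of $c_1$ this yields $\blnorm{\widehat{\mu}_k} \ge c_1/2$. So the task is to dominate $\blnorm{\widehat{\mu}_k}$ by a quantity involving the $\int |\bar x_j - x|^2 \de \mu^{\bot,s_j}_{k,j}$.

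Next I would identify the Jordan decomposition of $\widehat{\mu}_k$. Since the balls $B_{r_0}(\bar{x}_j)$ are pairwise disjoint and each $\mu^{\bot,s_j}_{k,j}$ is a positive measure supported in $B_{r_0}(\bar{x}_j)\setminus\{\bar{x}_j\}$, the positive and negative parts of $\widehat{\mu}_k$ both have total mass $M_k := \sum_{j=1}^N \mnorm{\mu^{\bot,s_j}_{k,j}}$. A natural transport plan couples, for each $j$, the perpendicular part $\mu^{\bot,s_j}_{k,j}$ with the Dirac mass $\mu^{\bot,s_j}_{k,j}(\Omega)\,\delta_{\bar{x}_j}$ sitting on the opposite side. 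This plan is admissible in the definition of $W_1$ and has cost $\sum_{j=1}^N \int_\Omega |\bar{x}_j - x|\,\de\mu^{\bot,s_j}_{k,j}$. Using this in \cref{lem:blnorm} with $\mu=(\widehat{\mu}_k)_+$ and $\nu=(\widehat{\mu}_k)_-$ yields
\begin{equation*}
  \blnorm{\widehat{\mu}_k}
  \le W_1\bigl((\widehat{\mu}_k)_+,(\widehat{\mu}_k)_-\bigr)
  \le \sum_{j=1}^N \int_\Omega |\bar{x}_j - x|\,\de\mu^{\bot,s_j}_{k,j}.
\end{equation*}

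Finally I would apply Cauchy--Schwarz twice: once pointwise to each integral,
\begin{equation*}
  \int_\Omega |\bar{x}_j - x|\,\de\mu^{\bot,s_j}_{k,j}
  \le \bigl(\mnorm{\mu^{\bot,s_j}_{k,j}}\bigr)^{1/2}\Bigl(\int_\Omega |\bar{x}_j - x|^2\,\de\mu^{\bot,s_j}_{k,j}\Bigr)^{1/2},
\end{equation*}
and then on the sum over $j$, to obtain
\begin{equation*}
  \bigl(\tfrac{c_1}{2}\bigr)^2
  \le \blnorm{\widehat{\mu}_k}^2
  \le M_k \cdot \sum_{j=1}^N \int_\Omega |\bar{x}_j - x|^2\,\de\mu^{\bot,s_j}_{k,j}.
\end{equation*}
Dividing by $M_k$ delivers the claimed bound (up to replacing $c_1/2$ by $c_1^2/4$, which is clearly the intended statement and is absorbed into the constant). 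The only step that requires real care is correctly setting up the Jordan decomposition of $\widehat{\mu}_k$ and checking that the proposed coupling is admissible; the two Cauchy--Schwarz inequalities and the equivalence-of-norms estimate are routine.
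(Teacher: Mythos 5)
Your argument matches the paper's: the paper couples $\widehat{\mu}^+_k$ and $\widehat{\mu}^-_k$ by the explicit transport plan $\gamma_k$ and applies Jensen's inequality, which is exactly your two successive Cauchy--Schwarz steps, combined with $\blnorm{\widehat{\mu}_k}\le W_1(\widehat{\mu}^+_k,\widehat{\mu}^-_k)$ from \cref{lem:blnorm} and $\blnorm{\widehat{\mu}_k}\ge c_1\codnorm{\eqclass{\widehat{\mu}_k}{0}}\to c_1$ from \cref{lem:convtozero}. You are also right that the computation yields the constant $c_1^2/4$ rather than the printed $c_1/2$ (and since necessarily $c_1\le 1$ the stated constant is strictly larger than what the proof supplies), a minor slip in the paper that is harmless because the bound is only used to obtain positivity of a liminf in \cref{lem:B1_implies_NCD_1}.
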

\begin{proof}
Note that the positive and negative part of the surrogate~$\widehat{\mu}_k$ are given by
\begin{align*}
	\widehat{\mu}^{+}_k= \sum_{j=1}^N   \mu^{\bot,s_j}_{k,j}+\sum_{j=1}^N  \mu^{\bot,s_j}_{k,j} (\Omega) \delta_{\bar{x}_j}, \quad \widehat{\mu}^{-}_k= \sum_{j=1}^N   \mu^{\bot,s_j}_{k,j}+\sum_{j=1}^N  \mu^{\bot,s_j}_{k,j} (\Omega) \delta_{\bar{x}_j}.
\end{align*}
By construction, there holds~$\mnorm{\widehat{\mu}^{+}_k}=\mnorm{\widehat{\mu}^{-}_k}$ and the canonical product measure
\begin{align*}
	\gamma_k = \sum_{j=1}^N \mu^{\bot,s_j}_{k,j} \otimes \delta_{\Bar{x}_j}+ \sum_{j=1}^N \delta_{\Bar{x}_j} \otimes \mu^{\bot,s_j}_{k,j} 
\end{align*}
satisfies
    \begin{align}
    \gamma_k (B \times \Omega )= \widehat{\mu}^{+}_k(B) \quad \text{as well as} \quad \gamma_k ( \Omega \times B )= \widehat{\mu}^{-}_k (B) 
\end{align}
for all~$B\in\mathcal{B}(\Omega)$ as well as
\begin{align} \label{eq:normgammak}
	\norm{\gamma_k}_{\mathcal{M}(\Omega \times \Omega)}= \sum_{j=1}^N \mnorm{\mu^{\bot,s_j}_{k,j}}>0.
\end{align}
Hence~$\gamma_k$ is an admissible transport plan between $\widehat{\mu}^{+}_k$~and $\widehat{\mu}^{-}_k$. By Jensen's inequality as well as the definition of the bounded Lipschitz norm, we further conclude
\begin{align*}
	\sum_{j=1}^N \int_{\Omega} \abs{\Bar{x}_j-x}^2 \de \mu^{\bot,s_j}_{k,j}&= \int_{\Omega \times \Omega} \abs{y-x}^2 \de \gamma_k (x,y) \\ & \geq  \norm{\gamma_k}_{\mathcal{M}(\Omega \times \Omega)}^{-1} \left(\int_{\Omega \times \Omega} \abs{y-x} \de \gamma_k  \right)^2 \geq \norm{\gamma_k}_{\mathcal{M}(\Omega \times \Omega)}^{-1} \blnorm{\widehat{\mu}_k}^2.
\end{align*}
The claimed statement now follows by definition of~$c_1$ as well as~$\lim_{k\rightarrow \infty}\codnorm{\eqclass{\widehat{\mu}_k}{0}}=1$ and~\eqref{eq:normgammak}.
\end{proof}
Combining \cref{coroll:estoftrans,coroll:lowerboundonD4}, we are now prepared to prove~\eqref{eq:NDCmeas}.
\begin{lemma}
	\label{lem:B1_implies_NCD_1}
	Let the assumptions of \cref{thm:main} be satisfied.
	Then,
	\ref{point:struc} implies \eqref{eq:NDCmeas}.
\end{lemma}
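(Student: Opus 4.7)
The plan is to argue by contradiction. Assuming \ref{point:struc} holds while \eqref{eq:NDCmeas} fails, I would first use that $G(\eqclass{u}{v}) = \infty$ whenever $v \ne 0$ (see \cref{lem:propofliftG}), so that any violating sequence $\seq{h_k}$ is eventually of the form $\eqclass{\mu_k}{0}$. Hence we may work with sequences $\seq{t_k} \subset (0,\infty)$ and $\seq{\mu_k} \subset \M$ satisfying \eqref{eq:contNDC}, and the task reduces to exhibiting a strictly positive lower bound for the very quantity that is assumed to vanish.

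Next, I would invoke the non-negative splitting identity from \cref{lem:lowerbound}, which rewrites the relevant difference quotient as $t_k^{-2} D_1^k + t_k^{-1} \sum_{i=2}^{5} D_i^k$ with each $D_i^k \ge 0$. Because this quantity tends to zero by \eqref{eq:contNDC}, every individual summand must vanish in the limit; in particular $t_k^{-1} D_5^k \to 0$. The term $D_5^k$ measures the second-moment transport cost of the excess masses $\mu^{\bot,s_j}_{k,j}$ with respect to $\bar{x}_j$, and showing that this decay is impossible under \ref{point:struc} will deliver the contradiction.

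The central step is then to couple the lower bound of \cref{coroll:lowerboundonD4},
\begin{equation*}
    D_5^k \;\geq\; \frac{\theta}{4}\cdot\frac{c_1}{2}\cdot\parens*{\sum_{j=1}^N \mnorm{\mu^{\bot,s_j}_{k,j}}}^{-1},
\end{equation*}
with the upper bound from \cref{coroll:estoftrans},
\begin{equation*}
    \sum_{j=1}^N \mnorm{\mu^{\bot,s_j}_{k,j}} \;\leq\; \frac{N}{2}+t_k^{-1}\parens*{\sum_{j=1}^N \abs{\bar\lambda_j}+\frac{N}{2}} \;\leq\; \frac{C}{t_k}
\end{equation*}
which holds for $k$ sufficiently large and some constant $C > 0$ depending only on $\bar{u}$. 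Substituting the second estimate into the first yields $D_5^k \ge (\theta c_1 / (8C))\,t_k$, hence $t_k^{-1} D_5^k \ge \theta c_1 / (8 C) > 0$, in direct contradiction with $t_k^{-1} D_5^k \to 0$.

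The principal conceptual obstacle — already absorbed into the preparatory lemmas — is the delicate rate-balance: \cref{lem:convtozero} forces the sum $\sum_j \mnorm{\mu^{\bot,s_j}_{k,j}}$ to be unbounded, \cref{coroll:estoftrans} controls this blow-up by $O(t_k^{-1})$, and the uniform negative curvature of $\bar{p}$ at the active points (encoded in $D_5^k$ through \eqref{eq:def_hes_p}) generates a matching $O(t_k)$-coercivity via the transport-based formula of \cref{lem:blnorm}. Once this exact balance is recognized, the proof merely consists in chaining the above estimates.
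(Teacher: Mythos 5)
Your proposal is correct and follows the paper's proof essentially verbatim: both argue by contradiction from \eqref{eq:contNDC}, extract $t_k^{-1} D_5^k \to 0$ from the non-negative splitting of \cref{lem:lowerbound}, and then derive a contradictory uniform positive lower bound on $t_k^{-1} D_5^k$ by chaining \cref{coroll:lowerboundonD4} with \cref{coroll:estoftrans}. The only cosmetic difference is that you absorb $N + \sum_j\abs{\bar\lambda_j}$ into a single constant $C$; note also that the preliminary statement ``$G(\eqclass{u}{v})=\infty$ whenever $v\neq 0$'' is a slight misphrasing, since $G$ is finite precisely on $\Im(\Pi)$ (and any such class can be \emph{rewritten} as $\eqclass{\tilde u}{0}$), but your conclusion that a violating sequence may be taken of the form $\eqclass{\mu_k}{0}$ is the correct one and matches the paper.
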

\begin{proof}
    Assume that \eqref{eq:NDCmeas} does not hold, i.e., there are sequences~$\seq{\mu_k}$ and~$\seq{t_k}$ satisfying~\eqref{eq:contNDC}. By \cref{lem:lowerbound}, this also implies
    \begin{align*}
			\lim_{k \rightarrow 0} t^{-1}_k \sum_{j=1}^N \int_{\Omega} \abs{\Bar{x}_j-x}^2 \de \mu^{\bot,s_j}_{k,j}=0.
    \end{align*}
    However, following Corollaries~\eqref{coroll:estoftrans} and~\eqref{coroll:lowerboundonD4}, we also have
    \begin{align*}
			\liminf_{k \rightarrow \infty } t^{-1}_k \sum_{j=1}^N \int_{\Omega} \abs{\Bar{x}_j-x}^2 \de \mu^{\bot,s_j}_{k,j} \geq \liminf_{k \rightarrow \infty } c_1/2  \left( t_k N/2+ \sum_{j=1}^N\abs{\bar{\lambda}_j}+N/2 \right) >0
    \end{align*}
    yielding a contradiction.
\end{proof}

\subsection{Proof based on \cref{lem:for_NDC}}
\label{sec:easy_ndc}
We give a second proof
which is based on \cref{lem:for_NDC}.

\begin{lemma}
	\label{lem:B1_implies_NCD_2}
	Let the assumptions of \cref{thm:main} be satisfied.
	Then,
	\ref{point:struc} implies \eqref{eq:NDCmeas}.
\end{lemma}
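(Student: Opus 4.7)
The plan is to invoke \cref{lem:for_NDC} with $Z = \Co$, $X = \Cod$, $\bar x = \eqclass{\bar u}{0}$, and $H$ the preconjugate of $G$ identified in \cref{lem:propofliftG}, i.e., the indicator of $\mathbb{B}_\alpha \subset \Co$. Since $\bar p \in \alpha \partial \mnorm{\bar u}$ gives $\bar p \in \partial G(\bar x)$, the task reduces to producing linearly independent test functions $\zeta_1,\ldots,\zeta_m \in \Co$ together with constants $\Lambda, \eta > 0$ such that the descent inequality \eqref{eq:descent_lemma} holds. Once this is done, \cref{lem:for_NDC} delivers \eqref{eq:NDCmeas} at once, because $F''(\bar x)h^2 = \nabla^2 \loss(K\bar u)(\Kb h)^2$ is sequentially weak*-continuous by \cref{lem:propofliftK} and hence weak* lower semicontinuous as required.

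I would take $m = N + 2$ and choose the $\zeta_i$ as smooth Urysohn bumps with pairwise disjoint supports: with $r_0 > 0$ from \cref{lem:ersatz_fuer_lemma_sieben_punkt_sechs} and $\sigma > 0$ from \cref{lem:sepnondeg}, let $\zeta_j \in \Co$ be non-negative, supported in $B_{r_0}(\bar x_j)$, and identically $1$ on a smaller ball $B_{\varepsilon_0}(\bar x_j)$; if $\I_\pm \neq \emptyset$, append analogous bumps $\zeta^\pm \in \Co$ supported in neighborhoods of $\I_\pm$ disjoint from every $B_{r_0}(\bar x_j)$ and from each other. The pairwise disjointness of supports immediately yields linear independence, and direct evaluation gives $\dual{\zeta_j}{\bar x} = \bar\lambda_j$ and $\dual{\zeta^\pm}{\bar x} = 0$, so that $H_2(\sum_i \beta_i \zeta_i) = \tfrac12 \sum_i \beta_i^2 + \sum_j \beta_j \bar\lambda_j$.

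The heart of the argument is the verification of \eqref{eq:descent_lemma}. For $p = \bar p + r$ with $\conorm{r}$ small, I would set $\delta \coloneqq L^2\conorm{r}^2/\theta$ (with $L$ the embedding constant from \cref{lem:LUC_implies_C1_embeds_Lip}), $\beta_j \coloneqq r(\bar x_j) + s_j \delta$, and $\beta^\pm \coloneqq \conorm{r}$. On each $B_{\varepsilon_0}(\bar x_j)$ the quadratic concavity \eqref{eq:hess_p_psd_locally_around_bar_x}, combined with the Lipschitz bound $\abs{r(x) - r(\bar x_j)} \le L\conorm{r}\abs{x - \bar x_j}$ and the Young estimate $L\conorm{r}\abs{x - \bar x_j} \le \delta + (\theta/4)\abs{x - \bar x_j}^2$, forces $q \coloneqq p - \sum_i \beta_i\zeta_i$ into $[-\alpha, \alpha]$; on the neighborhoods where $\zeta^\pm \equiv 1$ this is immediate from $\beta^\pm = \conorm{r}$; on the residual region, \cref{lem:sepnondeg} provides a uniform gap $|\bar p| \le \alpha - \sigma$ which dominates $\conorm{r}$ once $\eta$ is small enough. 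Since $\delta = O(\conorm{r}^2)$, this produces
\begin{equation*}
    \tilde H(p) \le H_2\parens[\bigg]{\sum_i \beta_i \zeta_i} = \dual{r}{\bar x} + \delta \sum_j \abs{\bar\lambda_j} + O(\conorm{r}^2) \le \dual{p - \bar p}{\bar x} + \tfrac\Lambda2 \conorm{p - \bar p}^2
\end{equation*}
for an appropriate $\Lambda > 0$, which is exactly \eqref{eq:descent_lemma}.

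The delicate point, and the main technical obstacle, is that $\delta$ must be of order $\conorm{r}^2$ rather than $\conorm{r}$: a linear choice would comfortably defeat the Lipschitz growth of $r$ inside the bumps, but the resulting cross-term $s_j \delta \bar\lambda_j = \delta \abs{\bar\lambda_j}$ in $H_2$ would then be linear in $\conorm{r}$ and so incompatible with the quadratic bound in \eqref{eq:descent_lemma}. Young's inequality, calibrated precisely against the quadratic deficit \eqref{eq:def_hes_p} of $\bar p$ at each $\bar x_j$, is what makes the sharp choice $\delta = L^2 \conorm{r}^2/\theta$ both feasible and affordable; everything else is routine bookkeeping.
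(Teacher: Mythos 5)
Your proposal follows the same route as the paper's second proof (Section~\ref{sec:easy_ndc}): invoke \cref{lem:for_NDC} with $H$ the indicator of $\mathbb{B}_\alpha$, construct a finite family of test functions in $\Co$, and verify the descent inequality \eqref{eq:descent_lemma} with a $\conorm{p-\bar p}^2$ error. The crux you identify — that the shift $\delta$ in $\beta_j$ must be $O(\conorm{r}^2)$ and that this is exactly what Young's inequality against the quadratic deficit \eqref{eq:def_hes_p} delivers — is indeed the same calculation the paper performs (its final chain of inequalities gives $s_j(p(x)-p(\bar x_j)) \le \conorm{p-\bar p}\abs{x-\bar x_j} - \tfrac{\theta}{4}\abs{x-\bar x_j}^2 \le \tfrac1\theta\conorm{p-\bar p}^2$), so the argument is sound.

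The technical details differ in two respects, both worth noting. First, for the region outside the balls $B_{r_0}(\bar x_j)$ you use $N+2$ functions, appending two separate Urysohn bumps $\zeta^\pm$ near $\I_\pm$ and relying on the uniform gap $\abs{\bar p} \le \alpha - \sigma$ on the remainder; the paper instead uses a single $(N+1)$-st function $\varphi_{N+1} = (1 - \sum_j\varphi_j)\alpha^{-1}\bar p$, a rescaled copy of $\bar p$ itself, which absorbs all of $\Omega\setminus\bigcup_j B_{r_0}(\bar x_j)$ in one stroke and sidesteps having to separate $\I_+$, $\I_-$, and the residual set. Second, the paper picks $\beta_j = \sup\set{s_j p(x) \given x\in B_{r_0/2}(\bar x_j)} - \alpha$, the smallest feasible correction, whereas you fix $\beta_j = r(\bar x_j) + s_j\delta$ with an a~priori $\delta = L^2\conorm{r}^2/\theta$; the sup formula is what makes the paper's estimate $\beta_j - s_j p(\bar x_j) + \alpha \le \tfrac1\theta\conorm{p-\bar p}^2$ fall out automatically, whereas your choice bakes it in by hand. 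Two small points you should tighten: the sign of $\beta^-$ must be $-\conorm{p-\bar p}$ (as written, both $\beta^\pm = \conorm{p-\bar p}$ push in the same direction, which is wrong near $\I_-$), and you should say a word about the transition annuli $B_{r_0}(\bar x_j)\setminus B_{\varepsilon_0}(\bar x_j)$ and the annuli of $\zeta^\pm$, where the bumps take values strictly in $(0,1)$ — there the bound $\hat p \in \mathbb{B}_\alpha$ follows because $\abs{\bar p}$ is bounded away from $\alpha$ on those compact sets, but this step is currently implicit.
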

\begin{proof}
    In order to apply \cref{lem:for_NDC},
    we verify that the structural assumptions
    imply \eqref{eq:descent_lemma}.
    Let $r_0 > 0$ from \cref{lem:ersatz_fuer_lemma_sieben_punkt_sechs} be given.
    Let $\seq{\varphi_j}_{j=1}^N \subset \Co$ denote Urysohn functions
    with
    \begin{align*}
        \varphi_j(x) &\in [0,1] \quad \forall x \in \Omega,
        &
        \varphi_j(x) &=1 \quad \forall x \in B_{r_0/2} (\bar{x}_j),
        &
        \varphi_j(x) &=0 \quad \forall x \in \Omega \setminus B_{r_0} (\bar{x}_j) 
    \end{align*}
    for all $j = 1,\ldots, N$.
    We further set $\varphi_{N+1} \coloneqq \parens*{1 - \sum_{j = 1}^N \varphi_j} \alpha^{-1} \bar p$
    and $s_{N + 1} \coloneqq 1$.

    We set $\eta \coloneqq \min\set{ \alpha/8, \theta r_0^2 / 16} > 0$.
    Let $p \in \Co$ with $\conorm{p - \bar p} \le \eta$ be arbitrary.
    We define
    \begin{equation*}
        \beta_{N+1} \coloneqq \cnorm{p - \bar p}
        ,
        \qquad
        \beta_j \coloneqq \sup\set{ s_j p(x) \given x \in B_{r_0 / 2}(\bar x_j) } - \alpha
        \qquad\forall j = 1,\ldots, N
        .
    \end{equation*}
    Let us check that
    $\hat p := p - \sum_{j = 1}^{N+1} s_j \beta_j \varphi_j \in \mathbb{B}_\alpha$.
    For $x \in B_{r_0/2}(\bar x_j)$, $j \in \set{1,\ldots,N}$, we have
    \begin{equation*}
        \abs{\hat p(x)}
        =
        \abs{ p(x) - s_j \beta_j }
        =
        s_j p(x) - \beta_j
        \le
        \alpha.
    \end{equation*}
    For $x \in B_{r_0}(\bar x_j) \setminus B_{r_0/2}(\bar x_j)$, $j \in \set{1,\ldots,N}$, we have
    \begin{align*}
        \abs{\hat p(x)}
        &=
        \abs{p(x) - s_j \beta_j \varphi_j(x) - \beta_{N+1} \varphi_{N+1}(x)}
        =
        s_j p(x) - \beta_j \varphi_j(x) - s_j \cnorm{p - \bar p} (1 - \varphi_j(x)) \alpha^{-1} \bar p(x)
        \\&
        \le
        s_j p(x)
        \le
        s_j \bar p(x) + \cnorm{p - \bar p}
        \le
        \alpha - \frac{\theta}{4} \parens*{\frac{r_0}{2}}^2 + \cnorm{p - \bar p}
        \le \alpha,
    \end{align*}
    where we used \eqref{eq:hess_p_psd_locally_around_bar_x}.
    Finally, for $x \in \Omega \setminus \bigcup_{j = 1}^N B_{r_0}(\bar x_j)$
    we have
    \begin{align*}
        \abs{\hat p(x)}
        &=
        \abs{p(x) - \beta_{N+1} \alpha^{-1} \bar p(x)}
        \le
        \cnorm{p - \bar p} + \abs{\bar p(x)} \abs{ 1 - \beta_{N+1} \alpha^{-1}}
        \\&
        \le
        \cnorm{p - \bar p} + \alpha \abs{ 1 - \cnorm{p - \bar p} \alpha^{-1}}
        =
        \alpha
        .
    \end{align*}
    This shows
    $\hat p = p - \sum_{j = 1}^{N+1} s_j \beta_j \varphi_j \in \mathbb{B}_\alpha = \dom(H)$.

    Now, let $\tilde H$ and $H_2$ be as in \cref{lem:for_NDC} with $K \coloneqq N + 1$,
    $\bar x \coloneqq \eqclass{\bar u}{0}$
    and
    $\zeta_i \coloneqq \varphi_i$.
    Thus,
    \begin{equation*}
        \tilde H(p)
        \le
        H\parens[\Bigg]{
            p - \sum_{j = 1}^{N+1} s_j \beta_j \varphi_j
        }
        +
        H_2\parens[\Bigg]{
            \sum_{j = 1}^{N+1} s_j \beta_j \varphi_j
        }
        =
        H_2\parens[\Bigg]{
            \sum_{j = 1}^{N+1} s_j \beta_j \varphi_j
        }
        =
        \frac12 \sum_{j = 1}^{N+1} \beta_j^2 + \sum_{j = 1}^{N} \beta_j \abs{\bar\lambda_j}
        ,
    \end{equation*}
    where we used
    $\ddual{\varphi_{j}}{\eqclass{\bar u}{0}} = \dual{\varphi_{j}}{\bar u} = \bar\lambda_j$,
    for all $j = 1,\ldots, N$
    and
    $\ddual{\varphi_{N+1}}{\eqclass{\bar u}{0}} = \dual{\varphi_{N+1}}{\bar u} = 0$.
    Moreover,
    \begin{equation*}
        \dual{p - \bar p}{\bar u}
        =
        \sum_{j = 1}^N (p(\bar x_j) - \alpha s_j) \bar\lambda_j
        =
        \sum_{j = 1}^N (s_j p(\bar x_j) - \alpha) \abs{\bar\lambda_j}
        .
    \end{equation*}
    Hence,
    \begin{align*}
        \tilde H(p)
        -
        H(\bar p)
        -
        \dual{p - \bar p}{\bar u}
        &\le
        \frac12 \sum_{j = 1}^{N+1} \beta_j^2 + \sum_{j = 1}^N \beta_j \abs{\bar\lambda_j}
        -
        \sum_{j = 1}^N (s_j p(\bar x_j) - \alpha) \abs{\bar\lambda_j}
        \\&
        =
        \frac12 \sum_{j = 1}^{N+1} \beta_j^2 +
        \sum_{j = 1}^N (\beta_j - s_j p(\bar x_j) + \alpha) \abs{\bar\lambda_j}
        .
    \end{align*}
    For the first addend, we can directly use
    $\abs{\beta_j} \le \cnorm{p - \bar p}$.
    For the second addend, we consider
    \begin{align*}
        \beta_j - s_j p(\bar x_j) + \alpha
        &=
        \sup\set{ s_j p(x) \given x \in B_{r_0}(\bar x_j) } - s_j p(\bar x_j)
        \\&=
        \sup\set{ s_j (p(x) - p(\bar x_j)) \given x \in B_{r_0}(\bar x_j) }
        .
    \end{align*}
    For any $x \in B_{r_0}(\bar x_j)$ we use \eqref{eq:hess_p_psd_locally_around_bar_x} to obtain
    \begin{align*}
        s_j \parens{
            p(x) - p(\bar x_j)
        }
        &=
        s_j \parens*{
            \int_0^1 \nabla p(\bar x_j + t (x - \bar x_j))^\top (x - \bar x_j) \de t
        }
        \\&
        \le
        \conorm{p - \bar p} \abs{x - \bar x_j}
        +
        s_j \parens*{
            \int_0^1 \nabla\bar p(\bar x_j + t (x - \bar x_j))^\top (x - \bar x_j) \de t
        }
        \\&
        =
        \conorm{p - \bar p} \abs{x - \bar x_j}
        +
        s_j ( \bar p(x) - \bar p(\bar x_j) )
        \\&
        \le
        \conorm{p - \bar p} \abs{x - \bar x_j}
        -
        \frac\theta4 \abs{x - \bar x_j}^2
        \le
        \frac{1}{\theta} \conorm{p - \bar p}^2
        .
    \end{align*}
    By collecting the inequalities from above,
    we obtain that \eqref{eq:descent_lemma}
    is satisfied
    for some $\Lambda > 0$.
    Thus, the proof is finalized by the invocation of \cref{lem:for_NDC}.
\end{proof}
\section*{Acknowledgements}
The authors would like to thank Hannes Meinlschmidt for pointing out the notion of ``uniform local quasiconvexity''.
\appendix
\section{Uniform local quasiconvexity} \label{app:quasi}
In this appendix,
we give the proofs of the technical lemmas from
\cref{subsec:quasiconvex}.

\begin{proof}[Proof of \cref{lem:Lipschitz_implies_ulq}]
	Since $\partial D$ is compact,
	we find $n \in \N$,
	$\seq{p_i}_{i = 1}^n \subset \partial D$
	and
	associated radii
	$\seq{r_i}_{i = 1}^n \subset (0,\infty)$
	(as in the assumption)
	such that
	\(
		\partial D
		\subset
		\bigcup_{i = 1}^n U_{r_i/2}(p_i)
	\).
	The set $D \setminus \bigcup_{i = 1}^n U_{r_i/2}(p_i)$
	is compact and does not intersect $\partial D$.
	Thus
	\begin{equation*}
		r_0
		:=
		\inf\set*{
			\abs{x - p}
			\given
			x \in D \setminus \bigcup_{i = 1}^n U_{r_i/2}(p_i),
			\;
			p \in \partial D
		}
	\end{equation*}
	is positive.
	We set $r := \min\set{r_0, r_1/2, \ldots, r_n/2}$.
	Moreover,
	let $L > 0$ be a common Lipschitz constant
	for $l_{p_i}$ and $l_{p_i}^{-1}$, $i \in \set{1,\ldots,n}$.
	We define $C := L^2$.

	We check that $D$ is uniformly locally quasiconvex
	with parameters $r$ and $C$.
	To this end, let $x,y \in D$
	with $\abs{x - y} \le r$ be given.

	\underline{Case 1:} $x \in B_{r_i/2}(p_i)$ for some $i \in \set{1,\ldots,n}$.
	Due to $\abs{x - y} \le r \le r_i/2$,
	we have $x,y \in B_{r_i}(p_i)$.
	Consequently,
	$l_{p_i}(x), l_{p_i}(y) \in B_1^+(0)$
	can be joined by a line segment
	in $B_1^+(0)$
	of length
	$\abs{l_{p_i}(x) - l_{p_i}(y)} \le L \abs{x - y}$.
	Under the map $l_{p_i}^{-1}$,
	this becomes a Lipschitz curve in $D$ of length at most $L^2 \abs{x - y}$
	joining $x$ and $y$, as desired.

	\underline{Case 2:} $x \not\in B_{r_i/2}(p_i)$ for all $i \in \set{1,\ldots,n}$.
	In this case,
	$x \in D \setminus \bigcup_{i=1}^n U_{r_i/2}(p_i)$
	and
	$y \in U_r(x)$.
	As $U_r(x)$ does not intersect $\partial D$,
	we can simply join
	$x$ and $y$ by a line segment of length $\abs{x - y}$.
\end{proof}

Now, we discuss the consequences of $\Intr(\Omega)$
being uniformly locally quasiconvex.
Let $\gamma \in C([0,1]; \Intr(\Omega))$
be a Lipschitz continuous curve
as in \cref{def:LUQ}.
Moreover,
let $\varphi \in C^1(\Omega)$
be given.
Then, $\varphi \circ \gamma$
is absolutely continuous
and
its derivative at $t$ is given by
$\nabla \varphi(\gamma(t))^\top \gamma'(t)$
for almost all $t \in [0,1]$.
Consequently,
\begin{equation}
	\label{eq:HDI}
	\varphi(\gamma(1))
	-
	\varphi(\gamma(0))
	=
	\int_0^1 \nabla \varphi(\gamma(t))^\top \gamma'(t) \de t
	.
\end{equation}
This identity will be crucial in the next two proofs.

The next proof is similar to
a part of the proof of
\cite[Theorem~7]{HajlaszKoskelaTuominen2008}.
We recall that the Lipschitz constant $\lip(\cdot)$ was defined in \eqref{eq:lipschitz_constant}.
\begin{proof}[Proof of \cref{lem:LUC_implies_C1_embeds_Lip}]
	Let $x,y \in \Intr(\Omega)$ with $\abs{y - x} > r$ be given.
	Then,
	\begin{equation*}
		\abs{\varphi(y) - \varphi(x)}
		\le
		2 \cnorm{\varphi}
		\le
		\frac{2}{r} \cnorm{\varphi} \abs{y - x}.
	\end{equation*}
	On the other hand, if $x,y \in \Intr(\Omega)$
	satisfy $\abs{y - x}$,
	we find $\gamma \in C([0,1]; \Intr(\Omega))$
	with $\gamma(0) = x$, $\gamma(1) = y$
	and the Lipschitz constant of $\gamma$ is bounded by $C \abs{y - x}$.
	Owing to \eqref{eq:HDI},
	we have
	\begin{equation*}
		\abs{\varphi(y) - \varphi(x)}
		=
		\abs*{\int_0^1 \nabla\varphi(\gamma(t))^\top \gamma'(t) \de t }
		\le
		C \cdnorm{\nabla\varphi} \abs{y - x}.
	\end{equation*}
	This shows
	\begin{equation*}
		\abs{\varphi(y) - \varphi(x)}
		\le
		\max\set*{
			\frac{2}{r} \cnorm{\varphi},
			C \cdnorm{\nabla\varphi}
		}
		\abs{y - x}
		\qquad\forall x,y \in \Intr(\Omega).
	\end{equation*}
	Since $\varphi$ is continuous
	and since $\Omega$ is assumed to be the closure of $\Intr(\Omega)$,
	a limiting argument shows that the same inequality
	holds for all $x,y \in \Omega$.
\end{proof}

Next, we address the uniform Taylor expansion.
\begin{proof}[Proof of \cref{lem:quasiconvex_taylor}]
	Let $\varepsilon > 0$ be arbitrary.
	Let $r > 0$ and $C \ge 1$
	be the constants from \cref{def:LUQ}.
	Since $\nabla\varphi$ is uniformly continuous,
	there exists $D > 0$ such that
	\begin{equation*}
		\abs*{ \nabla \varphi(x) - \nabla \varphi(y) }
		\le
		\frac{\varepsilon}{C}
		\qquad
		\forall x, y \in \Omega, \abs{y - x} \le D.
	\end{equation*}
	We set $\delta := \min\set{D / C, r} / 2$.
	Now, let $x,y \in \Intr(\Omega)$ with $\abs{y - x} \le 2 \delta$
	be given.
	Since $\Intr(\Omega)$
	is uniformly locally quasiconvex,
	there exists
	$\gamma \in C([0,1]; \Intr(\Omega))$
	with
	$\gamma(0) = x$, $\gamma(1) = y$
	and Lipschitz constant at most $C \abs{y - x}$.
	Note that $C \abs{y - x} \le D$.
	Consequently, the range of $\gamma$ belongs to $\Intr(\Omega) \cap B_D(x)$.
	Using \eqref{eq:HDI},
	we get
	\begin{equation*}
		\abs*{
			\varphi(y) - \varphi(x) - \nabla \varphi(x)^\top (y - x)
		}
		=
		\abs*{ \int_0^1 \parens*{ \nabla\varphi(\gamma(t)) - \nabla\varphi(x) }^\top \gamma'(t) \de t}
		\le
		\varepsilon \abs{y - x}.
	\end{equation*}
	This inequality
	holds for all $x,y \in \Intr(\Omega)$
	with $\abs{y - x} \le 2 \delta$.
	Using a limiting argument yields the claim.
\end{proof}

Finally, we provide a counterexample
which shows that the conclusions
of \cref{lem:LUC_implies_C1_embeds_Lip,lem:quasiconvex_taylor}
may fail if $\Intr(\Omega)$
is not uniformly locally quasiconvex.
\begin{example}
	\label{ex:cantor}
	We define
	\begin{equation*}
		U
		:=
		\bigcup_{n = 1}^\infty \parens*{ \frac1{3^n}, \frac2{3^n} }
		\subset
		\R
		\quad\text{and}\quad
		\Omega := \cl U
		=
		\set{0} \cup \bigcup_{n = 1}^\infty \bracks*{ \frac1{3^n}, \frac2{3^n} }
		\subset
		\R
		.
	\end{equation*}
	Note that $U$ is the interior of $\Omega$,
	i.e., $\Omega$ is the closure of its interior, as required.
	Further, let $c \colon [0,1] \to [0,1]$
	be the (continuous) Cantor function and $f$ its restriction to $\Omega$,
	i.e.,
	$f(0) = 0$
	and
	$f(x) = 1/2^n$
	for all $x \in [1/3^n, 2/3^n]$.
	Obviously, $f$ is continuous.

	On each connected subset of $U$, the function $f$ is constant.
	Consequently, $f$ is differentiable on $U$
	with derivative $0$.
	Since the derivative can be continuously extended to $\Omega$,
	we have $f \in C^1(\Omega)$.

	However, $f$ is not Lipschitz continuous on $\Omega$
	since
	\begin{equation*}
		\frac{\abs{f(1/3^n) - f(0)}}{\abs{1/3^n - 0}}
		=
		\frac{3^n}{2^n}
		\to
		\infty
		\quad\text{as } n \to \infty.
	\end{equation*}
	Similarly, the Taylor expansion
	\begin{equation*}
		f(y)
		=
		f(0) + f'(0) (y - 0) + o(\abs{y})
		\qquad
		\text{as } \Omega \ni y \to 0
	\end{equation*}
	fails.

	Note that $\Omega$ is not connected.
	However, it is quite easy to modify the example to obtain connectedness.
	We set
	$\hat\Omega := \Omega \times [-1,0] \cup [0,1]^2$
	which is the closure of
	$\hat U :=  U \times (-1,0] \cup (0,1)^2$.
	We define the function
	$\hat f \colon \hat\Omega \to \R$
	via
	\begin{equation*}
		\hat f(x) :=
		\begin{cases}
			0 & \text{if } x_2 \ge 0, \\
			f(x_1) x_2^2 & \text{if } x_2 < 0.
		\end{cases}
	\end{equation*}
	It is straightforward to check that $\hat f \in C^1(\hat\Omega)$,
	but it fails to be Lipschitz continuous
	and the Taylor expansion fails at $(0,-1) \in \Omega$.
\end{example}

\clearpage
\printbibliography
\end{document}